\theoremstyle{plain}
\newtheorem{theorem}{Theorem}
\newtheorem{lemma}{Lemma}
\theoremstyle{definition}
\theoremstyle{remark}
\def\A{\mathcal{A}}
\def\nbhd{neighborhood}
\def\rnd#1{} 
\def\R{{\mathbb R}}
\def\A{{\mathcal{A}}}
\def\Cl{\mathrm{Cl}}
\def\Int{\mathrm{Int}}
\def\MB{distancelike}
\author{Henry C. King}
\title{Tico Spines}
\begin{document}

\maketitle

\section{Introduction}

If $X$ is a compact manifold with boundary, a \emph{spine} of $X$ is a subset $Y$ of the interior of $X$
so that $X-Y$ is diffeomorphic to $\partial X\times [0,\infty)$\footnote{In other contexts one might ask for 
	the stronger condition that $X$ be a regular \nbhd\ of its spine, but we use this weaker notion in this paper.}.
A \emph{tico} $\A$ in $X$ is a finite collection $\A=\{A_1,\ldots,A_k\}$ of smooth codimension one closed
submanifolds\footnote{Outside the context of this paper, we would want to allow these submanifolds to have boundary and to intersect $\partial X$.  But here we will not allow such behavior.} of the interior of $X$ in general position.
The \emph{realization} $|\A|$ of the tico is the subset $\bigcup _{i=1}^k A_i$.
In various published works, Akbulut and I stated the following theorem.

\begin{theorem}\label{main_thm}
	Suppose $M$ is a smooth compact manifold which bounds a smooth compact manifold $X$.
	Then there is a smooth compact manifold $Z$ and a tico $\A$ in $Z$  so that $|A|$ is a spine of $Z$.
\end{theorem}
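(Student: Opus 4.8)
Here is how I would approach the proof.

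The plan is to reduce the theorem to producing a tico whose realization carries a regular neighborhood with the correct boundary, and then to build such a tico from a handle (Morse) decomposition of the null-bordism $X$. The reduction rests on the fact that a tico is automatically a spine of any closed regular neighborhood of its realization: if $\A$ is a tico in a manifold $V$ and $Z\subset\Int V$ is a closed regular neighborhood of $|\A|$, then $Z$ collapses onto $|\A|$ and $Z-|\A|$ is a collar on $\partial Z$, so $Z-|\A|\cong\partial Z\times[0,\infty)$. (For the rest of the paper one wants this neighborhood to respect the tico stratification, which is the purpose of the notions \trn\ and \MB; for the bare spine property an ordinary regular neighborhood suffices.) Hence it is enough, given an $M$ which bounds, to produce a tico $\A$ in some manifold whose tico regular neighborhood has boundary diffeomorphic to $M$. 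The basic model is dictated by the case $X=D^{n+1}$, $M=S^n$: take $\A=\{\mathbb{RP}^n\}\subset\mathbb{RP}^{n+1}$. Then $\mathbb{RP}^n$ is the exceptional divisor of the blowup of $\mathbb{R}^{n+1}$ at the origin, its regular neighborhood is the twisted interval bundle over $\mathbb{RP}^n$ — equivalently $\mathbb{RP}^{n+1}$ with an open disc removed — and this has boundary $S^n$. The same example shows $Z$ cannot in general be $X$: a disc has no tico spine, because a nonempty closed codimension one submanifold of a disc separates it, so the complement is disconnected and cannot be $S^n\times[0,\infty)$.

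For the general case, discard closed components of a null-bordism (their boundary is empty, so what remains still bounds $M$) and treat one connected component at a time; so assume $X$ is connected with $\partial X\neq\emptyset$. Fix a Morse function $f\colon X\to[0,\infty)$ with $f^{-1}(0)=\partial X=M$ and no interior critical points of index $0$ — this is possible since a connected manifold with nonempty boundary has a handle decomposition relative to its boundary with no $0$-handles. As the regular value $t$ decreases from above the top critical value to $0$, the level sets $M_t=f^{-1}(t)$ pass through a sequence $\emptyset\rightsquigarrow\cdots\rightsquigarrow M$ whose consecutive terms differ either by the birth of a sphere $S^n$ (at a local maximum of $f$) or by a genuine surgery (of index between $0$ and $n-1$); since $f$ has no interior minima, no component is ever killed. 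Inductively along this sequence I would carry a tico together with a tico regular neighborhood $Z_t$ of it having $\partial Z_t\cong M_t$: at a birth, adjoin a new component modelled on the twisted interval bundle over $\mathbb{RP}^n$ (with core $\mathbb{RP}^n$ adjoined to the tico); at a genuine index-$k$ surgery along a framed sphere in $M_t=\partial Z_t$, glue in a standard local tico model realizing that surgery on the boundary — built, as in the base case, from a disc with convex corners and the exceptional divisors of blowups in the normal directions of the surgery — carrying the tico along as a product over the product regions between critical values. Taking $Z$ to be the final stage, $\partial Z\cong M$, every component of $Z$ has nonempty boundary, and $Z$ is a tico regular neighborhood of the assembled tico $\A$, so $Z-|\A|\cong\partial Z\times[0,\infty)\cong M\times[0,\infty)$. (An alternative organization, perhaps closer to the \sst\ technology, resolves a triangulated spine of $X$ directly: replace each simplex by a projective model and glue along resolved faces.)

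The real work — and the expected main obstacle — is in the gluing. The local models are manifolds with corners, and when two are glued the assembled hypersurfaces are only in general position after a small isotopy; one must straighten the combined configuration into an honest tico while simultaneously keeping the ambient region a (tico) regular neighborhood of it and keeping its complement a product. This is exactly what the \sccr\ technique, together with the \pt\ and \dcc\ formalism and the \MB-function description of regular neighborhoods, is designed for; showing that rounding disturbs neither the normal-crossings condition nor the product structure of the complement is the technical heart of the argument. Closely related is the final check that $Z-|\A|$ is genuinely diffeomorphic to $\partial Z\times[0,\infty)$, not merely homotopy equivalent to it, which again is controlled by the \MB-function picture.
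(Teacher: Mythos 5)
Your reduction is sound and matches the paper's framework: a tico is a spine of any tico regular \nbhd\ of its realization (Lemma~\ref{tico_reg_nbhd}), so it suffices to build a tico whose tico regular \nbhd\ has boundary $M$; your base model $\mathbb{RP}^n\subset\mathbb{RP}^{n+1}$ is the blowup of $B^{n+1}$ at a point, and you correctly observe that a disc has no tico spine, so $Z$ must differ from $X$. But your overall organization is genuinely different from the paper's, and the inductive step that carries all the weight of your argument is not actually constructed.

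The paper does not build the tico level-by-level from a Morse function. It triangulates $X'=X-c(M\times[0,1))$, equips the resulting simplicial stratification with Thom--Mather tubular data (\MB\ functions $\rho_S$ and compatible retractions $\pi_S$, via Theorem~\ref{piconstruct0}), and produces a \emph{cellular} tico all at once: one sheet $A_S=\partial B_S$ per stratum $S$, with $B_S$ a smoothed product of a convex cell in $S$ and a normal disc. It then shows this is a \emph{tico prespine}: each component of $X-\Int U$ either is a collar on $\partial X$, has spherical boundary, or lies in a component of $X-|\A|$ diffeomorphic to $\R^n$. After that, the only handles ever attached are $1$-handles (along a spanning forest connecting the spherical pieces to $M$), and the only blowups are along the resulting curves $C_e$. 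Your plan instead asks for local tico models realizing framed surgeries of every index $k$, glued onto $Z_t$ so that the union is again a tico regular \nbhd\ with the prescribed boundary. That is the gap: you never exhibit these models, and the gluing is not merely a corner-rounding nuisance. When a plug $P$ is attached to $Z_t$ along $S^{k-1}\times D^{n-k+1}\subset\partial Z_t$, the attaching region becomes interior to $Z_t\cup P$, so the tico-map flow lines on the two sides must match there; since the attaching data is an arbitrary framed embedding, you give no mechanism ensuring this, nor any reason the new sheets in $P$ can meet the old tico in general position while keeping $Z_t\cup P$ a regular \nbhd\ of the combined realization. Without the models and the matching argument the induction does not close. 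The point of the paper's global construction is precisely that it avoids this: the tico is built compatible with itself from the start, and only a uniform low-complexity fix-up (disc deletion, $1$-handles, curve blowups) is needed afterward. Your parenthetical alternative --- resolving a triangulated spine simplex by simplex --- is in fact the route the paper takes, and is the one I would pursue.
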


For its proof we relied on a picture like Figure 1
at a crucial point and gave no details.
This paper is meant to provide the details.
\begin{figure}
\centering
\includegraphics[scale=0.4]{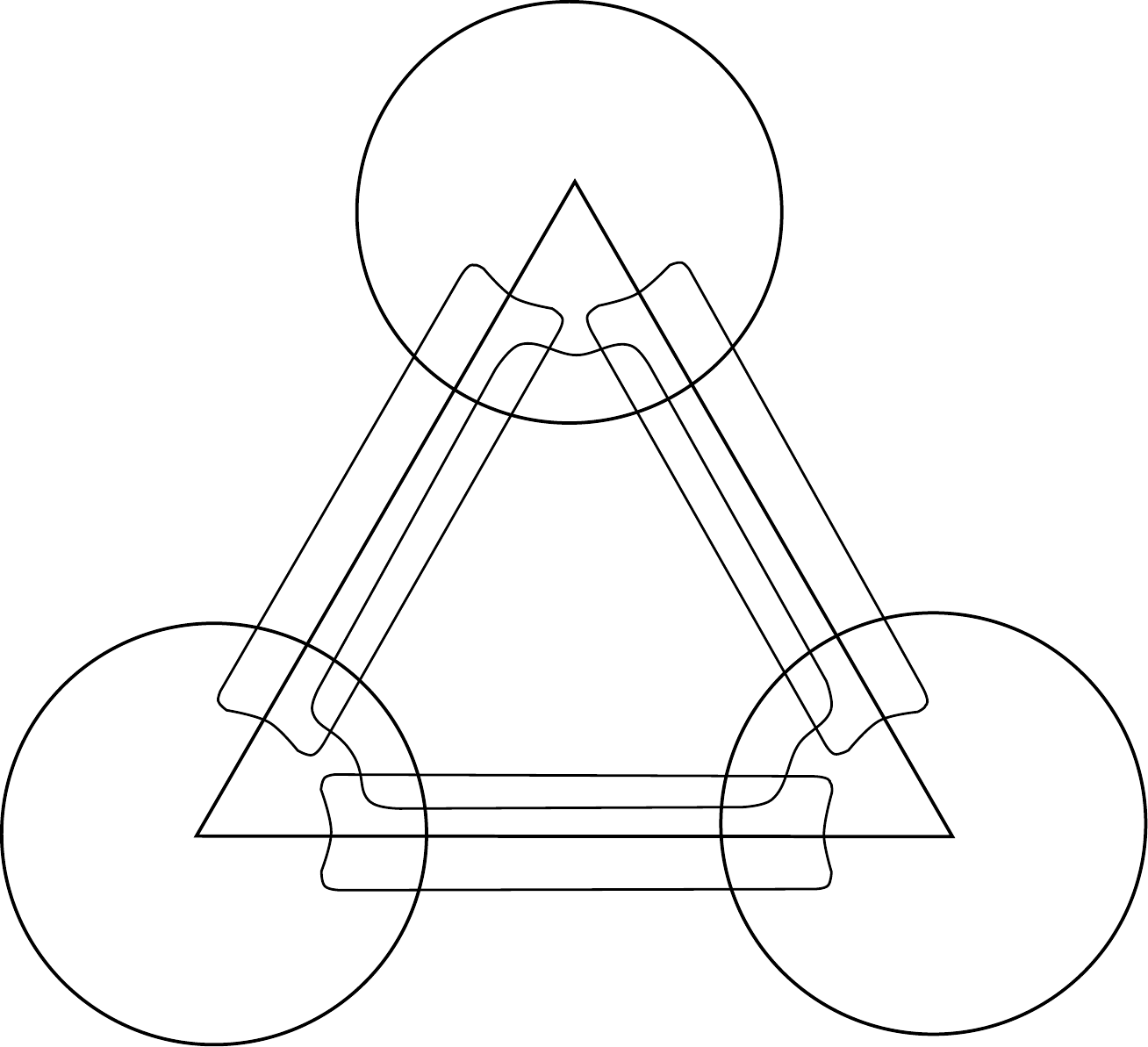}
\caption{This paper in a Nutshell}
\end{figure}

\section{Dimension 2}

In an earlier version of this paper, the general proof omitted the case $\dim M=2$
so an ad hoc proof for $\dim M=2$ was needed.
This is no longer the case but I leave in this section anyway, if only to provide concrete examples.

By classification of surfaces, we know that if the surface $M$ bounds, it can be obtained by attaching
some one handles to a two sphere.
Take the standard unit sphere $S^2$ in $\R^3$, bounding the unit ball $B^3$.
Attach one handles to $B^3$ to obtain a 3 manifold $Y$ whose boundary is $M$.
When we attach a one handle, we however are careful to attach it to two antipodal points on the sphere.
For each one handle $[0,1]\times D^2$ attached we consider the following curve $C_i$ in $Y$ consisting of the core 
$[0,1]\times 0$ of the one handle, union the line segment between the two attaching points.
Since the attaching points are antipodal, these curves all go through the origin.
Note that the union of the $C_i$ is a spine of $Y$.
Let $\pi\colon Y'\to Y$ be the blow up of the origin.
Then $Q = \pi^{-1}(\bigcup C_i)$ is a spine of $Y'$.
Note $Q$ consists of the projective plane $\pi^{-1}(0)$ union a bunch of disjoint curves $C'_i$, the strict transforms of the $C_i$.
These curves are transverse to the exceptional divisor $\pi^{-1}(0)$.
Now let $\rho\colon X\to Y'$ be the blow up of the union of the $C'_i$.
We have a tico $\A$ in $X$ consisting of the strict transform of $\pi^{-1}(0)$ and the $\rho^{-1}(C_i')$.
Note $|\A|$ is a spine of $X$, since the blowup maps restrict to a diffeomorphism of $X-|\A|$ to $Y-\bigcup C_i$.

\section{Terminology}

Some terminology I introduce in this paper is meant to be
completely local to this paper, as it is introduced
solely to work with the particular proof in this paper.
Easy generalizations may be suppressed, no attempt
has been made to reconcile with the literature, etc.
It is meant solely to get the job done with reasonable
efficiency, with occasional excursions to side issues
I find amusing.
But the following should be reasonably standard.

Smooth means $C^\infty$.
If $X$ and $Y$ are subsets of 
smooth
manifolds $M$ and $N$, we say a map $f\colon X\to Y$ is {\em smooth}
if $f$ can be extended to a smooth map from a \nbhd\ of $X$ in $M$
to a \nbhd\ of $Y$ in $N$.
We say $f$ is a {\em diffeomorphism} if it is smooth and has a smooth inverse.
We let $df$ denote the induced map on tangent bundles.

A {\em smooth collar} of a smooth manifold $M$ with boundary is a smooth embedding 
$c\colon \partial M\times [0,\infty)\to M$ so that $c(x,0)=x$ for all $x\in \partial M$.

A map is {\em proper} if the inverse image any compact set is compact.

If $f\colon X\to Y$ is a map and $A\subset X$, then
$f|_A\colon A\to Y$ is the restriction of $f$ to $A$
and $f|$ is the restriction of $f$ to some subset which is
clear from context, e.g., $f|\colon A\to Y$ implies that
$f| = f|_A$.

We let $\R$ denote the real numbers.
If $i=1,\ldots,n$ we let $\R^n_i$ denote the coordinate hyperplane
$\{x\in\R^n\mid x_i=0\}$.

If $\A$ is a tico then we call the codimension one submanifolds in $\A$ sheets.

An {\em affine subspace} of $\R^n$ is a translation of a 
linear subspace. An {\em affine map} is the composition of
a linear map and a translation.

The following corollary of Lemma \ref{division}
is useful so we state it here but prove it later.

\begin{lemma}\label{division1}
	Suppose $f\colon V\to \R$ is smooth where $V\subset \R^n$
	is open and $V\cap \R^n_i\subset f^{-1}(0)$.
	Then there is a smooth function $g\colon V\to \R$ so that
	$f(x)=x_ig(x)$ for all $x\in V$.
\end{lemma}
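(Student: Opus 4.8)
The plan is to prove this by a localized Hadamard-type argument. The obstruction to using the usual integral formula globally is that $V$ need not be star-shaped in the $x_i$-direction: for $x\in V$ with $x_i\neq 0$, the foot of the perpendicular $x-x_ie_i$ onto $\R^n_i$ need not lie in $V$, so there is no segment in $V$ along which to integrate $\partial f/\partial x_i$. I will therefore build $g$ on small pieces and then observe that the pieces are forced to agree.

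First I would produce, for each $p\in V$, an open neighborhood $U_p\subset V$ and a smooth $g_p\colon U_p\to\R$ with $f=x_ig_p$ on $U_p$. If $p_i\neq 0$, take a ball $U_p\subset V$ about $p$ disjoint from $\R^n_i$; then $x_i$ is nowhere zero on $U_p$, so $g_p:=f/x_i$ works. If $p_i=0$, take a ball $U_p=B(p,r)\subset V$; it is convex, and for any $x\in U_p$ the point $x-x_ie_i$ again lies in $U_p$ (its distance to $p$ only decreases), hence in $V\cap\R^n_i\subset f^{-1}(0)$. Setting $g_p(x):=\int_0^1(\partial f/\partial x_i)(x-(1-t)x_ie_i)\,dt$, differentiation under the integral sign (the integrand is smooth on $U_p\times[0,1]$) makes $g_p$ smooth, while the fundamental theorem of calculus along the segment from $x-x_ie_i$ to $x$ gives $f(x)-f(x-x_ie_i)=x_ig_p(x)$, and $f(x-x_ie_i)=0$ by hypothesis, so $f(x)=x_ig_p(x)$.

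Finally I would glue. On an overlap $U_p\cap U_q$ we have $x_ig_p=f=x_ig_q$, so $g_p=g_q$ on $\{x_i\neq 0\}\cap U_p\cap U_q$, which is dense in the open set $U_p\cap U_q$ since $\R^n_i$ is nowhere dense; by continuity $g_p=g_q$ on all of $U_p\cap U_q$. Hence the $g_p$ patch to a single well-defined function $g$ on $V=\bigcup_pU_p$, smooth because it is smooth near every point, with $f(x)=x_ig(x)$ for all $x\in V$. (Alternatively one could fix a partition of unity $\{\phi_p\}$ subordinate to $\{U_p\}$ and set $g=\sum_p\phi_pg_p$, so that $x_ig=\sum_p\phi_p(x_ig_p)=\sum_p\phi_pf=f$; the gluing argument just makes even this unnecessary.) The only use of the hypothesis $V\cap\R^n_i\subset f^{-1}(0)$ is to kill the boundary term $f(x-x_ie_i)$, and the only real subtlety is the possible non-convexity of $V$, which is exactly what forces the localization; the rest is routine.
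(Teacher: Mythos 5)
Your proof is correct and matches the paper's approach: the paper proves Lemma \ref{division1} as the $m=1$ special case of Lemma \ref{division}, whose proof of part (1) likewise constructs the quotient locally---as $f/y_i$ off the hyperplane and via the Hadamard integral $\int_0^1 \partial f/\partial y_i(x,ty)\,dt$ near it---and then patches with a partition of unity, which is the alternative gluing you mention in your final paragraph. Your density-based uniqueness gluing is a mild simplification available here because the quotient by a single $x_i$ is unique (unlike the non-unique $g_i$ in the multi-variable Lemma \ref{division}, which forces the partition-of-unity argument there).
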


Much of this paper is quite elementary.
I find it generally easier to prove things than to search the literature and give
proper attribution to the original discoverers.
For this I apologize, and will be happy to include proper attributions if any readers
point them out to me.

\section{Tico Regular Neighborhoods}

Given a tico $\A$ in $X$ we'll define the notion of a tico regular \nbhd\ of $\A$.
It will be a mapping cylinder \nbhd, unique up to isotopy, etc., but in this version of the paper
we'll only develop the bare minimum we need which is not much.

If $\A$ is a tico in $X$, a \emph{tico parameterization} around $x\in |\A|$ is a smooth embedding
$h\colon U\to X$ where $U$ is an open \nbhd\ of $0$ in $\R^n$, $n=\dim X$, $h(0)=x$, and
$h^{-1}(|\A|) = U \cap \bigcup_{i=1}^k \R^n_i$ (recall $\R^n_i$ denotes $\{x\in \R^n\mid x_i=0\}$).

A \emph{tico map} $f\colon (X,|\A|)\to (\R,0)$ is a smooth map so that
$f^{-1}(0)=|\A|$ and for every $z\in |\A|$
and some (and hence every) tico parameterization $h\colon U\to X$ around $z$
with $h^{-1}(|\A|) = U \cap \bigcup_{i=1}^k \R^n_i$,
then $fh(x) = \Pi_{i=1}^k x_i^{b_i} u(x)$ for some nowhere 
zero smooth function $u$ and integer exponents $b_i>0$.
Note that after a change of coordinates $y_1 = x_1 |u(x)|^{1/b_1}$, $y_i=x_i$ for $i>1$,
we could choose our tico parameterization so that $u(x) = \pm1$ is constant.

\begin{lemma}\label{ticocrit}
	Let $\A$ be a tico in $X$ and let $f\colon (X,|\A|)\to (\R,0)$
	be a tico map.
	Then there is a \nbhd\ $U$ of $|\A|$ in $X$ so that
	$f$ has no critical points in $U-|\A|$.
	In particular, if $f$ is proper then there is an $\epsilon>0$
	so that $f$
	has no critical values in $[-\epsilon,0)\cup(0,\epsilon]$.
\end{lemma}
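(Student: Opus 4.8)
\emph{Proof proposal.} The assertion is local near $|\A|$, together with a compactness argument, so I would proceed in two stages. First the local claim: for each $z\in|\A|$ I will produce an open neighborhood $V_z\subseteq X$ of $z$ with no critical point of $f$ in $V_z\setminus|\A|$; then $U:=\bigcup_{z\in|\A|}V_z$ is the desired \nbhd, since any $p\in U\setminus|\A|$ lies in some $V_z\setminus|\A|$ and is therefore a regular point. To prove the local claim, choose a tico parameterization $h\colon W\to X$ around $z$ with $h^{-1}(|\A|)=W\cap\bigcup_{i=1}^m\R^n_i$, where $m\geq 1$ is the number of sheets through $z$; by definition of a tico map, after shrinking $W$ we have $fh(x)=\bigl(\prod_{i=1}^m x_i^{b_i}\bigr)u(x)$ with $u$ smooth and nowhere zero on $W$ and each $b_i$ a positive integer. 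Writing $\partial_j$ for $\partial/\partial x_j$, the product rule gives, for $1\leq j\leq m$,
\[
\partial_j(fh)(x)=\Bigl(\prod_{i\neq j}x_i^{b_i}\Bigr)\,x_j^{\,b_j-1}\bigl(b_j\,u(x)+x_j\,\partial_j u(x)\bigr).
\]
Since $u(0)\neq 0$ and $b_j\geq 1$, the factor $b_j u+x_j\partial_j u$ is nonzero at $0$, hence nonzero on some ball $B\subseteq W$ about $0$ for every $j=1,\dots,m$ (finitely many conditions). If $x\in B$ and $x\notin\bigcup_i\R^n_i$, then $x_i\neq 0$ for all $i$, so every factor on the right is nonzero and $\partial_j(fh)(x)\neq 0$; thus $x$ is a regular point of $fh$. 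Setting $V_z:=h(B)$ proves the local claim. (Alternatively one can first normalize $u$ to $\pm1$ via the coordinate change noted just after the definition of a tico map, which kills the cross term $x_j\partial_j u$; I would likely just carry $u$ along as above.)

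For the properness statement, assume $f$ is proper and fix any $\epsilon_0>0$; then $K:=f^{-1}([-\epsilon_0,\epsilon_0])$ is compact, the set $C$ of critical points of $f$ lying in $K$ is a compact subset of $K$ (it is closed, being $K\cap\{p:df_p=0\}$), and the critical values of $f$ in $[-\epsilon_0,\epsilon_0]$ are exactly $f(C)$. By the first part, $C\cap U\subseteq|\A|$, so $f(C\cap U)\subseteq\{0\}$. On the other hand $C\setminus U$ is compact and disjoint from $|\A|=f^{-1}(0)$, so $0\notin f(C\setminus U)$, and since $f(C\setminus U)$ is compact, $\delta:=\mathrm{dist}\bigl(0,f(C\setminus U)\bigr)>0$ (with the convention $\delta=+\infty$ if $C\setminus U=\emptyset$). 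Taking $\epsilon:=\min(\epsilon_0,\delta/2)$, any critical value of $f$ in $[-\epsilon,0)\cup(0,\epsilon]$ would lie in $f(C)\setminus\{0\}\subseteq f(C\setminus U)$, which is impossible since that set avoids $[-\epsilon,\epsilon]$. Hence $f$ has no critical value in $[-\epsilon,0)\cup(0,\epsilon]$.

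The only genuinely delicate point is the local computation: the exponents $b_i$ being \emph{strictly} positive is precisely what forces each partial $\partial_j(fh)$ with $1\leq j\leq m$ to be nonvanishing off the coordinate hyperplanes, and the minor nuisance is controlling the smooth unit $u$ and its first derivatives near $0$, which is why one passes to the smaller ball $B$ (or normalizes $u$ first). Everything after that is bookkeeping plus the standard properness/compactness argument.
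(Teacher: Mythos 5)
Your proof is correct and takes essentially the same approach as the paper: the paper's local step is your product-rule computation after normalizing $u$ to $\pm1$ (your parenthetical alternative), with the piecing-together of local neighborhoods left implicit, and the paper's properness step is the same compactness argument phrased as a sequential contradiction rather than via your distance function.
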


\begin{proof}
	To find $U$ we may as well suppose we are in the local case where 
	$X$ is an open subset of $ \R^{n}$,
	$|\A| =X\cap \bigcup_{i=1}^k \R^n_i$, and 
	$f(x) = \pm\Pi_{i=1}^k x_i^{b_i}$
	where $b_i>0$.
	In this case $0$ is the only critical value of $f$ so we may take $U=X$.
	
	If $f$ is proper, suppose $z_i\not\in |\A|$ is a sequence
	of critical points of $f$ so that $f(z_i)\to 0$.
	Then after taking a subsequence we may suppose $z_i\to z_0\in |\A|$ contradicting the existence of $U$.
\end{proof}

\begin{lemma}\label{istrn}
	Let $\A$ be a tico in $X$ with $|\A|$ compact.
	Then there is a proper tico map $f\colon (X,|\A|)\to (\R,0)$ and an $\epsilon>0$
	so that $f$ has no critical values in 
	$[-\epsilon,0)\cup (0,\epsilon]$.
\end{lemma}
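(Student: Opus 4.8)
The plan is to construct a single proper tico map $f\colon(X,|\A|)\to(\R,0)$; granting that, the second sentence of the lemma is immediate from the ``in particular'' clause of Lemma~\ref{ticocrit}. Note first that each sheet $A_i$ of $\A=\{A_1,\dots,A_k\}$ is compact, being a closed subset of the compact set $|\A|$, so each $A_i$ has a tubular \nbhd\ $N_i$ in $\Int X$ with $\Cl N_i$ compact.

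I would assemble $f$ from one building block per sheet. For each $i$, view $N_i$ as the total space of a line bundle over $A_i$ and, fixing a fibre metric, let $q_i\colon N_i\to[0,\infty)$ be the resulting squared-norm function, so that $q_i^{-1}(0)=A_i$ and $q_i$ vanishes to order exactly two in the normal direction. Pick a smooth $\lambda_i\colon X\to[0,1]$ that is $\equiv1$ near $A_i$ and supported in $N_i$, and set $\tilde q_i=\lambda_i q_i+(1-\lambda_i)$; this is a smooth function $X\to[0,\infty)$ that agrees with $q_i$ near $A_i$, equals $1$ outside $N_i$, and has $\tilde q_i^{-1}(0)=A_i$. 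Finally fix any smooth proper $\psi\colon X\to[1,\infty)$ and put $f=\psi\cdot\prod_{i=1}^k\tilde q_i$.

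Three verifications remain. First, $f^{-1}(0)=\bigcup_iA_i=|\A|$ because $\psi>0$. Second, $f$ is a tico map: given $z\in|\A|$ lying on exactly the sheets $A_{i_1},\dots,A_{i_m}$ and a tico parameterization $h$ at $z$ with $h^{-1}(A_{i_j})=\R^n_j$, the factors $\tilde q_i$ with $i\notin\{i_1,\dots,i_m\}$ are positive near $z$, while for each $j$ the function $\tilde q_{i_j}\circ h$ is nonnegative, vanishes exactly on $\R^n_j$, and near $0$ agrees with the squared-norm $q_{i_j}$, so two applications of Lemma~\ref{division1} write it as $x_j^2$ times a smooth factor, which is positive because $q_{i_j}$ vanishes to order exactly two. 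Hence, after shrinking the domain of $h$, $fh(x)=(x_1\cdots x_m)^2\,u(x)$ with $u$ smooth and nowhere zero, so $f$ is a tico map with all exponents equal to $2$. Third, $f$ is proper: off $\bigcup_iN_i$ we have $f=\psi$, so for each $c$ the set $f^{-1}([0,c])$ is a closed subset of the compact set $\psi^{-1}([0,c])\cup\bigcup_i\Cl N_i$, hence compact; since $f\ge0$ this shows $f^{-1}$ of any compact set is compact. Lemma~\ref{ticocrit} now yields the required $\epsilon$.

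The argument is routine; the only place that takes a little care is the local normal form in the second step — checking that the metric function $q_i$ really has the form ``$x_i^2$ times a nowhere-zero function'' in an \emph{arbitrary} tico parameterization — and this is precisely what the division lemma provides. The extra factor $\psi$ serves only to handle the case when $X$ is noncompact; if $X$ is compact it can be omitted.
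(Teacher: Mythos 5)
Your construction is correct, and it takes a genuinely different route from the paper's. The paper covers $|\A|$ by finitely many tico parameterizations $p_i\colon V_i\to X$, lets $g_i(y)=\prod_j y_j^2$ be the local model on each $V_i$, and then takes a \emph{sum} $f=\alpha_0 g+\sum_i\alpha_i\,g_ip_i^{-1}$ against a partition of unity subordinate to $\{X-|\A|\}\cup\{p_i(V_i)\}$, with $g$ a proper function contributing the behavior away from $|\A|$. You instead build one globally defined squared-distance function $\tilde q_i$ per \emph{sheet}, damped to $1$ off a relatively compact tube, and take a \emph{product} $f=\psi\cdot\prod_i\tilde q_i$, using Lemma~\ref{division1} twice to verify the local monomial form. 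Both organize the same ingredients (order-two vanishing, a proper auxiliary function, a bump-function interpolation) differently: the sum approach makes the tico property nearly automatic since each summand $g_ip_i^{-1}$ is already a local tico map, while your product approach gives factors with global meaning (one per sheet), at the mild cost of needing the division lemma to see the local normal form. One small remark: in your second step, the fact that the first quotient $g=(\tilde q_{i_j}\circ h)/x_j$ again vanishes on $\R^n_j$ --- which is what licenses the second application of Lemma~\ref{division1} --- deserves a sentence (it follows from nonnegativity of $\tilde q_{i_j}\circ h$, exactly as in the proof of part 2 of Lemma~\ref{division}, which you could also cite directly to get $x_j^2$ in one step). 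With that filled in, the argument is complete.
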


\begin{proof}
	We may cover $|\A|$ with a finite number of tico parameterizations
	$p_i\colon V_i\to X$, $i=1,\ldots,n$
	where $V_i\subset  \R^{n}$
	and $p_i^{-1}(|\A|) = V_i\cap  \bigcup _{j=1}^{k_i}\R^m_j$.
	After shrinking each $V_i$ if necessary we may suppose
	that the closure of each $p_i(V_i)$ is compact.
	Let $U_0=X-|\A|$ and $U_i=p_i(V_i)$ for $i=1,\ldots,n$.
	Let $\alpha_i\colon X\to [0,1]$ be a smooth partition of unity
	for the open cover $U_0,\ldots,U_n$ of $X$.
	Let $g\colon X\to [1,\infty)$ be any proper smooth function.
	Let $g_i\colon V_i\to \R$ be $g_i(x) = \Pi_{j=1}^{k_i} x_j^2$.
	Then we may take 
	$$f(z) = \alpha_0(z)g(z) + \Sigma_{i=1}^n \alpha_i(z)g_ip_i^{-1}(z).$$
	Note $f$ is a tico map so by Lemma \ref{ticocrit}
	and properness of $f$ we may choose $\epsilon>0$ so that $f$ has no
	critical values in
	$[-\epsilon,0)\cup (0,\epsilon]$.
\end{proof}

If $\A$ is  a tico in $X$ with $|\A|$ compact 
a \emph{tico regular \nbhd} of $|\A|$ is $f^{-1}([-\epsilon,\epsilon])$
where $f\colon (X,|\A|)\to (\R,0)$ is a proper tico map and 
$f$  has no critical values in
$[-\epsilon,0)\cup (0,\epsilon]$
and $|f(x)|>\epsilon$ for all $x\in \partial X$.

If $\A=\{A_1,\ldots ,A_k\}$ is a tico in $X$ and $Y$ is a proper submanifold of $X$,
we say $Y$ is in general position with $\A$ if $Y$ is transverse to $\bigcap _{i\in B} A_i$
for all subsets $B\subset \{1,\ldots,k \}$.
In this case, there is an induced tico $Y\cap \A = \{Y\cap A_1,\ldots ,Y\cap A_k\}$ in $Y$.

If I were a good boy I would prove tico regular \nbhd s are mapping cylinder \nbhd s but instead content myself with the following weaker form which is all we need.

\begin{lemma}\label{tico_reg_nbhd}
	Suppose $\A$ is a tico in  $X$ with $|\A|$ compact and $U$ is a tico regular \nbhd\ of $|\A|$.
	Then $|\A|$ is a spine of $U$,  i.e., $U-|\A|$ is diffeomorphic to $\partial U\times [0,\infty)$.
	If $Y$ is a closed  submanifold of $X$ in general position with $\A$
	we may choose the regular \nbhd\ $U$ so that the pair $(U-|\A|,U\cap Y - |\A|)$ is diffeomorphic to
	$(\partial U,\partial U \cap Y)\times [0,\infty)$.
	In fact, for any proper tico map $f\colon (X,|\A|)\to (\R,0)$, and any sufficiently small $\epsilon>0$,
	$(U-|\A|,U\cap Y - |\A|)$ is diffeomorphic to
	$(\partial U,\partial U \cap Y)\times [0,\infty)$ for
	$U=f^{-1}([-\epsilon,\epsilon])$.
\end{lemma}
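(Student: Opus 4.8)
The plan is to present $U-|\A|$ as a disjoint union of two collar-like pieces, each carrying a proper map to a half-open interval with no critical points, and to put product structures on these pieces by a flow; handling $Y$ at the same time forces a relative (parametrized) version of that flow argument, and that is where the real work is.

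For the set-up, fix a proper tico map $f\colon(X,|\A|)\to(\R,0)$. By Lemma~\ref{ticocrit} applied to $f$ there is $\epsilon_0>0$ with no critical values of $f$ in $[-\epsilon_0,0)\cup(0,\epsilon_0]$, and we also require $|f|>\epsilon_0$ on $\partial X$. For $\epsilon\in(0,\epsilon_0]$ set $U=f^{-1}([-\epsilon,\epsilon])$; then $U\subset\Int X$, and since $\pm\epsilon$ are regular values $U$ is a compact manifold with $\partial U=f^{-1}(-\epsilon)\sqcup f^{-1}(\epsilon)$, while $U-|\A|=f^{-1}([-\epsilon,0))\sqcup f^{-1}((0,\epsilon])$ because $f^{-1}(0)=|\A|$. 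On each of these two pieces $f$ has no critical points and is proper, so it is enough to produce, on each, a product structure over its interval that is compatible with $Y$, then reparametrize each interval to $[0,\infty)$ and take the disjoint union.

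Next I reduce the role of $Y$ to the same picture inside $Y$. I claim $f|_Y\colon(Y,|Y\cap\A|)\to(\R,0)$ is again a proper tico map for the induced tico $Y\cap\A$ in $Y$: properness is immediate since $Y$ is closed in $X$ and $f$ is proper, and for the local normal form one uses the standard fact that a submanifold in general position with a tico has, near each point of $Y\cap|\A|$, a tico parameterization $h\colon V\to X$ of $(X,|\A|)$ in which $h^{-1}(Y)$ is also a coordinate subspace spanned by coordinates disjoint from the sheet indices --- such an $h$ comes from any tico parameterization by a coordinate change that straightens $h^{-1}(Y)$ while preserving each $\R^n_i$, which is possible precisely because $Y$ is transverse to every intersection $\bigcap_{i\in B}A_i$; restricting the product-of-powers expression for $f$ to that subspace exhibits $f|_Y$ as a tico map. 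Applying Lemma~\ref{ticocrit} to $f|_Y$ and shrinking, we may assume the $\epsilon\in(0,\epsilon_0]$ above is also such that $f|_Y$ has no critical values in $[-\epsilon,0)\cup(0,\epsilon]$; then $\pm\epsilon$ are regular values of $f|_Y$, $U$ misses $\partial X$ so $U\cap Y=(f|_Y)^{-1}([-\epsilon,\epsilon])$ is a tico regular \nbhd\ of $|Y\cap\A|$ in $Y$, and $\partial(U\cap Y)=\partial U\cap Y$.

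The remaining step, which I expect to be the main obstacle, is a relative trivialization. On $f^{-1}((0,\epsilon])$ I want a smooth vector field $v$ with $df(v)\equiv1$ which is tangent to $Y$ at every point of $f^{-1}((0,\epsilon])\cap Y$. Given such a $v$, its backward flow starting on $f^{-1}(\epsilon)$ is complete on the relevant time range (flow lines stay in the compact sets $f^{-1}([\delta,\epsilon])$ since $f$ is proper) and gives a diffeomorphism $f^{-1}(\epsilon)\times[0,\epsilon)\to f^{-1}((0,\epsilon])$ carrying $(f^{-1}(\epsilon)\cap Y)\times[0,\epsilon)$ onto $f^{-1}((0,\epsilon])\cap Y$; the analogous forward flow handles $f^{-1}([-\epsilon,0))$, and assembling the two halves and reparametrizing $[0,\epsilon)\cong[0,\infty)$ gives the asserted diffeomorphism of pairs --- the bare statement that $|\A|$ is a spine of $U$ being the case $Y=\emptyset$. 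To build $v$: near a point of $f^{-1}((0,\epsilon])\cap Y$ both $f$ and $f|_Y$ are submersions, so there are local coordinates making $f$ a coordinate projection with $Y$ a coordinate subspace in the fiber directions, and the corresponding coordinate lift of the standard vector field on $(0,\epsilon]$ is then automatically tangent to $Y$; near a point off $Y$ take any local lift of that field; patch with a partition of unity whose members meeting $Y$ are all of the first type. The equation $df(v)=1$ is affine, hence survives the convex combination, and tangency to $Y$ survives because at a point of $Y$ only first-type members contribute. The two genuinely delicate points are this ``lift and tangency at once'' bookkeeping and the quoted local straightening of $Y$ against the tico; the rest is routine.
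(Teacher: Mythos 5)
Your proof is correct and follows essentially the same strategy as the paper's: pick $\epsilon$ small enough that both $f$ and $f|_Y$ have no critical values in $[-\epsilon,0)\cup(0,\epsilon]$, build a vector field on $U-|\A|$ compatible with $f$ and tangent to $Y$ by local choices plus a partition of unity, and flow to get the product structure. The only cosmetic difference is that the paper normalizes the vector field by $df(v)=-f$, so the flow multiplies $f$ by $e^{-t}$ and the domain $[0,\infty)$ comes out directly, whereas you use $df(v)=1$ and then reparametrize $[0,\epsilon)\cong[0,\infty)$; you also spell out the (correct, but left implicit in the paper) check that $f|_Y$ is a proper tico map for $Y\cap\A$.
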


\begin{proof}
	Note $Y\cap \A$ is a tico in $Y$ so by Lemma \ref{ticocrit}  we may choose $\epsilon>0$ so that $f$ has no critical values in $[-\epsilon,0)\cup (0,\epsilon]$
	and $f|_Y$ has no critical values in $[-\epsilon,0)\cup (0,\epsilon]$ and $|f| > \epsilon$ on $\partial X$.
	We may choose a smooth vector field $v$ on $U-|\A|$ so that $df(v)=-f$ and $v$ is tangent to $Y$.
	It is easy to choose $v$ locally, then piece together with a partition of unity.
	Let $\phi_t$ be the flow for this vector field.
	Note $f\phi_t(x) = e^{-t}f(x)$.
	So $(x,t)\mapsto \phi_t(x)$ gives a diffeomorphism from $f^{-1}(\{\epsilon,-\epsilon\})\times [0,\infty)$
	to $f^{-1}([-\epsilon,0)\cup(0,\epsilon])$ which restricts to  a diffeomorphism from $f^{-1}(\{\epsilon,-\epsilon\})\cap Y\times [0,\infty)$
	to $f^{-1}([-\epsilon,0)\cup(0,\epsilon])\cap Y$.
	Since $U = f^{-1}([-\epsilon,\epsilon])$ and $|\A|=f^{-1}(0)$ the result follows.
\end{proof}

\section{Tico prespines}

The crucial step in our proof of Theorem \ref{main_thm} is the existence of what we will here call a tico prespine,
as suggested to us by Lowell Jones.
Suppose $\A$ is a tico in a smooth manifold $X$.
We say $\A$ is a \emph{tico prespine} if for a tico regular \nbhd\ $U$ of $|\A|$
then if $V$ is any connected component of $X-\Int U$ either
\begin{itemize}
	\item $V$ intersects $\partial X$ and $ V$ is diffeomorphic to $(V\cap \partial X)\times [0,1]$, or
	\item $V$ does not intersect $\partial X$ and $\partial V$ is diffeomorphic to a sphere, or
	\item $V$ does not intersect $\partial X$ and the component of $X-|\A|$ containing $V$ is diffeomorphic to $\R^n$.
\end{itemize}
The last alternative is only needed due to my imperfect knowledge of low dimensional topology.
If every manifold invertibly cobordant to $S^4$ is diffeomorphic to $S^4$ it is unneeded,
since it would be subsumed by the second alternative.

The proof of Theorem \ref{main_thm} then goes as follows.
Since the case $\dim X=1$ is trivial, we assume $\dim X \ge 2$.
We start with $M$ the boundary of $X$. If $X$ is not connected, take the connected sum of its components to make it so.
We then find a tico prespine $\A$ in $X$, which is what we detail in this paper.
Take a regular \nbhd\ $U$ of $|\A|$ as above. 
If there is a component $V$ of $X-\Int U$ which does not intersect $M=\partial X$ so that $\partial V$
is not diffeomorphic to a sphere then we delete the interior of a small closed disc in $V$ and add the result to $U$.
Thus we obtain a compact manifold $Y$ whose boundary is the disjoint union of $M$ and many spheres.
It will turn out that $|\A|$ is a spine of $Y$.

Consider the one complex $G$ whose vertices are the connected components of $X-|\A|$.
There is an edge between two components $V_1$ and $V_0$ if and only if there is a smooth path from $V_0$ to $V_1$
in general position with $|\A|$ and intersecting $|\A|$ in a single point.
Take a spanning forest $G'$ in $G$ so that each connected component of $G'$ contains exactly one vertex which is a 
component of $X-|\A|$ intersecting $\partial X$.  That is, $G'$ is a subcomplex of $G$ containing all the vertices of $G$
so that each connected component of $G'$ is simply connected and contains exactly one vertex which intersects $\partial X$.
(It's easy to construct $G'$ by starting with the vertices intersecting $\partial X$ 
and then growing $G'$ by adding one vertex and one edge at a time.)
Represent each edge $e$ of $G'$ by a smooth path $P_e$ between its two vertex components
in general position with $|\A|$ and intersecting $|\A|$ in a single point.
By shortening the path or isotoping it slightly if needed we may assume it is embedded and lies in $Y$ and all the paths $P_e$ are disjoint.
We constructed $Y$ using a tico regular \nbhd\ $U$ which is $f^{-1}([-\epsilon,\epsilon])$ for some proper tico map
$f$ and $\epsilon>0$. For convenience, after squaring $f$ we may assume $f$ is nonnegative.
By Lemma \ref{tico_reg_nbhd} we may choose an $\epsilon'\le\epsilon$
and a diffeomorphism $h\colon f^{-1}(\epsilon')\times [0,\infty)\to f^{-1}((0,\epsilon'])$
so that $$h((\bigcup_{e\in G'} P_e \cap f^{-1}(\epsilon'))\times [0,\infty))= \bigcup_{e\in G'} P_e \cap f^{-1}((0,\epsilon']).$$
Since $f$ has no critical values in $[\epsilon',\epsilon]$ we know $U-f^{-1}([0,\epsilon'))$ is just the product with an interval,
so we may as well have taken $U=f^{-1}([0,\epsilon'])$ in our construction of $Y$.
Assume we did so, so from now on $\epsilon'=\epsilon$.

Suppose we have some component $V$ of $X-\Int U$ which doesn't intersect $\partial X$ and whose boundary is not a sphere.
Then $V$ is contained in a component $V'$ of $X-|\A|$ diffeomorphic to $\R^n$.
Since the end if $V'$ is trivialized by both $S^{n-1}\times \R$ and $(V'\cap f^{-1}(\epsilon))\times \R$
we know $S^{n-1}$ and $V'\cap f^{-1}(\epsilon)=\partial V$ are invertibly cobordant, hence homotopy equivalent.
Thus $n>3$ (or in fact $n=5$ after a great deal of work, but let's not bother).
In particular, we know by \cite{CKS} that the curves $P_e\cap V'$ are all standard, i.e.,
there is a diffeomorphism $g\colon V'\to \R^n$ so that for each edge $e$, either $P_e\cap V'$ is empty or
is the radial ray
$g(P_e\cap V') = \{tv_e\mid t\ge 1\}$ for some nonzero vector $v_e\in \R^n$.
We may as well extend or truncate the paths $P_e$ so that all $v_e$ are unit vectors.
Then when we form $Y$ we just make sure that $Y\cap V' = g^{-1}(\{x\in \R^n \mid 1\le |x|\})$.

After all this we have arrived at the point where $(Y-|\A|, \bigcup_{e\in G'} P_e\cap Y-|\A|)$ is diffeomorphic with 
$(\partial Y,\partial Y\cap \bigcup_{e\in G'} P_e)\times [0,\infty)$.
This diffeomorphism can be obtained by integrating a vector field $v$ on $Y-|\A|$.
Now for each edge $e$ we attach a one handle $[-1,1]\times D^{n-1}$ to $Y$ at the two endpoints of $P_e$,
obtaining a manifold $Y'$.
We have a curve $C_e = P_e \cup [-1,1]\times 0$ obtained from $P_e$ by attaching the core of the handle.
Clearly $|\A|\cup \bigcup_{e\in G'}C_e$ is a spine of $Y'$, we can see this by integrating the vector field $v'$
on $Y-|\A|-\bigcup_{e\in G'}C_e$ given by $v$ on $Y$ and $v'(x,y)= (x,-y)$ on each one handle.
Let $\pi\colon Z\to Y'$ be the blowup with center $\bigcup_{e\in G'} C_e$.
Let $\pi^*(\A)$ denote the set of strict transforms of of the sheets of $\A$.
Then $\A' = \pi^*(\A)\cup \{\pi^{-1}(C_e)\}_{e\in G'}$ is a tico in $Z$ and $|\A'|$ is a spine of $Z$
and $\partial Z = M$.

\section{Cellular Ticos}

Our first step to a tico prespine will be a cellular tico.
We say a tico $\A$ in $X$ is \emph{cellular} if 
for every sheet $S\in \A$ there is a codimension 0 submanifold $B_S$ of $\Int X$ 
so that $S=\partial B_S$ and
every connected component
of $X-|\A|$ is either diffeomorphic to $\R^n$ or not contained in any $B_S$.

\begin{theorem}\label{cellular_tico}
	Suppose $\A$ is a tico in a smooth $n$ dimensional manifold $X$ and for each sheet $S$ of $\A$ there is a
	codimension 0 submanifold $B_S$ of $X$ so that $S=\partial B_S$ and
	$B_S-S$ is diffeomorphic to $\R^n$.
	Moreover, suppose that for any $C\subset \A$ with $\bigcap _{S\in C} B_{S}$ nonempty then $\bigcap _{S\in C} S$
	is diffeomorphic to a (nonempty) sphere.
	Then any connected component of $X-|\A|$ which is contained in some $B_S$ is diffeomorphic to $\R^n$.
	In particular $\A$ is cellular.
\end{theorem}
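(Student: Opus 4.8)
The plan is to localise near $V$, to identify $V$ as the interior of a manifold with corners assembled from the $B_S$, and to recognise that manifold as a $\dcc$; the closing sentence is then free, since the given $B_S$ directly witness cellularity once the first assertion is proved. So fix a component $V$ of $X-|\A|$ with $V\subseteq B_S$ for some sheet $S$; we must produce a diffeomorphism $V\cong\R^n$. Two consequences of the hypotheses will be used constantly. Applying the sphere hypothesis to a one-element $C=\{S'\}$ (legitimate, as $B_{S'}\supseteq B_{S'}-S'\neq\emptyset$) shows every sheet $S'$ is a sphere, hence $S'\cong S^{n-1}$; together with $B_{S'}-S'\cong\R^n$ this forces $B_{S'}\cong D^n$. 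Recognising $D^n$ from a standard interior and a smooth $S^{n-1}$ boundary is elementary except in dimension $4$, where it needs a smoothing-theoretic input; this is the sole non-elementary point, and matches the hedging elsewhere in the paper.

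Since $V$ is connected and disjoint from $\Fr B_S=S$ for every sheet $S$, exactly one of $V\subseteq\Int B_S$, $V\subseteq X-B_S$ holds for each sheet; record the sign $\sigma_S\in\{+,-\}$ and set $B^\sigma=\bigcap_{\sigma_S=+}B_S\cap\bigcap_{\sigma_S=-}(X-\Int B_S)$. In local coordinates at a point through which a family of sheets pass — transverse coordinate hyperplanes, with each relevant $B_S$ a coordinate half-space — $B^\sigma$ is a single closed coordinate orthant; hence $B^\sigma$ is a manifold with corners, every sheet meets $B^\sigma$ only along $\partial B^\sigma$, and $\Int B^\sigma=B^\sigma-|\A|$. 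As $V$ is connected and lies in $B^\sigma$ it lies in a single component $W$ of $B^\sigma$; since $V$ misses $\partial B^\sigma$ we get $V\subseteq\Int W$, and since $\Int W$ is a connected subset of $X-|\A|$ meeting $V$ it equals $V$. Thus $V=\Int W$ with $W$ a connected manifold with corners, $\partial W=W\cap|\A|$, and $W\subseteq B_S$. If $\partial W=\emptyset$ then $W=V$ is open and closed in $X$ and contained in the connected open set $\Int B_S$, so $V=\Int B_S\cong\R^n$; so assume $\partial W\neq\emptyset$.

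It remains to prove $W$ is a $\dcc$, so that $\Int W\cong\R^n$. I would argue by induction, say on $(\dim X,\#\A)$ lexicographically, sheets disjoint from $W$ being irrelevant near $W$. Suppose some sheet $S_1$ with $\sigma_{S_1}=+$ meets $W$. Double the entire package along $S_1$: the ambient becomes $DB_{S_1}$, a standard $S^n$ since $S_1\cong S^{n-1}$ and $B_{S_1}\cong D^n$; the new sheets are the doubles of the $S_j\cap B_{S_1}$ (manifolds, since the tico is transverse to $S_1$) with balls the doubles of $B_{S_j}\cap B_{S_1}$; and $W$ becomes $DW=W\cup_{W\cap S_1}W$, met by one fewer sheet, with $\Int DW=V\cup\Int(W\cap S_1)\cup V'$. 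One checks the hypotheses persist — iterated intersections of new sheets are doubles of spheres along transverse subspheres, again spheres; the new balls have $\R^n$ interiors by already-treated cases — whence $\Int DW\cong\R^n$ by induction. In this $\R^n$, $V$ is one of the two open regions separated by the properly embedded closed hypersurface $\Int(W\cap S_1)$, which the dimension-$(n-1)$ case of the theorem (applied to $S_1$ with its induced tico) identifies with $\R^{n-1}$; a lemma that a copy of $\R^n$ split along such a doubling hypersurface is $\R^n$ on each side then gives $V\cong\R^n$. If no $+$-sheet meets $W$ one instead restricts the package to $\Int B_S\cong\R^n$ (one fewer sheet, same region) and recurses, or — in the residual case where $W$ is ``inside $B_S$ and outside every other ball'' — argues directly that $W$ is $B_S$ with a controlled union of balls deleted.

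The step I expect to be the main obstacle is making the induction honestly non-circular: verifying that a doubled or restricted system of $B$'s still has the ``interior diffeomorphic to $\R^n$'' property is itself a lower instance of the theorem, so the inductive order must be arranged so those instances are genuinely earlier — organising on (dimension, number of sheets meeting the region under study), with doubling strictly lowering the second coordinate and restriction lowering the total sheet count, is meant to secure this, but the bookkeeping is delicate. The remaining work sits in a couple of self-contained lemmas — recognising $D^n$ from its interior, and recognising each side of $\R^n$ split along a doubling hypersurface — which I would prove with the $\MB$ (distancelike) functions developed later in the paper.
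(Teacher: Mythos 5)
Your step ``together with $B_{S'}-S'\cong\R^n$ this forces $B_{S'}\cong D^n$'' is a genuine gap, not a routine ``smoothing-theoretic input.'' A compact smooth $n$-manifold $W$ with $\partial W\cong S^{n-1}$ and $\Int W\cong\R^n$ is a homotopy $n$-ball, and for $n=4$ the statement that every such $W$ is diffeomorphic to $D^4$ is open (it sits between the smooth four-dimensional Poincar\'e and Schoenflies conjectures). Since your doubling and restriction steps recurse through all dimensions down from $n$, you cannot sidestep $n=4$ once $n\ge 4$. The paper's proof is structured precisely so that no closed region ever needs to be recognized as a disc: it inducts only on the number of sheets, deleting one sheet $T$ at a time, and passes between dimension $n$ and dimension $n-1$ using two swindle-based tools. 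The weak Schoenflies Theorem~\ref{schoen} identifies the complementary \emph{open} pieces $B_S\cap T - S\cap T$ with $\R^{n-1}$ (it deliberately says nothing about the closed halves), and Theorem~\ref{HLT} then shows that if a sheet meets a component $Z\cong\R^n$ of $X-|\A'|$ in copies of $\R^{n-1}$, the components of $Z$ minus that sheet are again $\R^n$. Your ``lemma that a copy of $\R^n$ split along a doubling hypersurface is $\R^n$ on each side'' is essentially Theorem~\ref{HLT}; it is not a consequence of the \MB\ (distancelike) machinery in the later sections, but rests on the hyperplane unknotting results of \cite{CKS} (plus a separate Schoenflies argument when $n=3$).

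Two smaller points. The lexicographic induction on $(\dim X,\#\A)$ is not obviously well-founded for your doubling step: doubling along $S_1$ keeps the ambient dimension and replaces the sheets meeting $W$ by their doubles, so the hypothesis ``the new balls have $\R^n$ interiors'' that you want to cite inductively is not visibly an earlier instance without extra bookkeeping on how many sheets meet the region under study. And your first paragraph's reduction to the manifold-with-corners $W$ with $V=\Int W$ is a nice picture, but it does not by itself reduce the problem, since recognizing a corner-manifold $W\subset B_S$ as a \dcc\ still requires exactly the disc-recognition and splitting inputs whose difficulty the paper's sheet-deletion induction is engineered to avoid.
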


\begin{proof}
	Choose some sheet $T\in \A$.
	By hypothesis we know $T$ is diffeomorphic to the sphere $S^{n-1}$.
	Let $\A' = \A-\{T\}$.
	By induction on the number of sheets of $\A$, we know that any connected component of $X-|\A'|$ contained in 
	some $B_S$, $S\neq T$, is diffeomorphic to $\R^n$.
	Consider the tico $\A'\cap T$ in $T$ whose sheets are all nonempty $S\cap T$ for $S\in \A'$.
	If $B_S\cap T$ is nonempty for some sheet $S\ne T$ then $S\cap T$ is an $n-2$ sphere so
	by the weak Schoenflies Theorem \ref{schoen} we know that $B_S\cap T - S\cap T$ is diffeomorphic to $\R^{n-1}$.
	Hence by induction on the number of sheets we know that any connected component of $T-|\A'\cap T|$ contained in 
	some $B_S\cap T$, $S\neq T$, is diffeomorphic to $\R^{n-1}$.
	So by Theorem \ref{HLT} we know any connected component of $X-|\A|$ which is contained in some $B_S$, $S\ne T$ is diffeomorphic to $\R^n$.
	Repeating the argument with some $T'\ne T$ in place of $T$ we see that any connected component of $X-|\A|$ which is contained in $B_T$ is also diffeomorphic to $\R^n$.
\end{proof}

We used above the following weak version of the smooth Schoenflies theorem since as far as I know, smooth Schoenflies is unknown in dimension 4.

\begin{theorem}\label{schoen}
	Suppose $f\colon S^{n-1}\to S^n$ is a smooth embedding.  Then $S^n-f(S^{n-1})$ is diffeomorphic to the disjoint union of two copies of $\R^n$.
\end{theorem}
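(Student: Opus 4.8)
The plan is to show that $f(S^{n-1})$ cuts $S^n$ into two pieces whose closures are compact contractible smooth manifolds with boundary $S^{n-1}$, then to reduce the theorem to the assertion that the interior of such a piece is $\R^n$, and finally to prove that assertion by dimension — it is formal in every dimension except $4$, which is the crux.

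First I would invoke Alexander duality: for $n\ge 2$, $\tilde H_k(S^n-f(S^{n-1}))\cong\tilde H^{n-k-1}(S^{n-1})$, which is $\mathbb Z$ for $k=0$ and $0$ otherwise (the cases $n\le 1$ are trivial), so $S^n-f(S^{n-1})$ has exactly two components $U_+,U_-$, each acyclic. Since $S^n$ and $S^{n-1}$ are orientable, the normal line bundle of $f(S^{n-1})$ has vanishing $w_1$, hence is trivial, so by the tubular \nbhd\ theorem $f(S^{n-1})$ is bicollared. Therefore each $\Cl U_\pm$ is a compact smooth manifold with $\partial\Cl U_\pm$ diffeomorphic to $S^{n-1}$ and $\Int\Cl U_\pm=U_\pm$, and $\Cl U_\pm$ is still acyclic since it deformation retracts onto $U_\pm$ along the collar. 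Van Kampen applied to $S^n=\Cl U_+\cup\Cl U_-$, whose intersection $f(S^{n-1})$ is simply connected once $n\ge 3$, gives $\pi_1\Cl U_\pm=1$, so by Hurewicz and Whitehead each $\Cl U_\pm$ is contractible. It thus suffices to prove: if $W$ is a compact contractible smooth $n$-manifold with $\partial W$ diffeomorphic to $S^{n-1}$, then $\Int W$ is diffeomorphic to $\R^n$.

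For this reduced statement, cap $W$ off with a disc to form the closed manifold $\widehat W=W\cup_{S^{n-1}}D^n$; then $\widehat W$ is a homotopy $n$-sphere (simply connected with trivial homology), and deleting the center of the disc identifies $\Int W$ with $\widehat W$ minus a point, so it is enough to show $\widehat W$ minus a point is diffeomorphic to $\R^n$. For $n\le 3$ this is classical (the classification of surfaces, and in dimension $3$ Alexander's theorem, which applied to $\Cl U_\pm\subset S^3$ is essentially the original statement). For $n=5$ it holds because every homotopy $5$-sphere is the standard $S^5$ (Kervaire--Milnor, $\Theta_5=0$), whose complement of a point is $\R^5$. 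For $n\ge 6$ one removes the interior of a second smoothly embedded $n$-disc: the resulting cobordism between the two boundary spheres is simply connected and a homology product, hence an $h$-cobordism, hence a product by the $h$-cobordism theorem, so $W\cong D^n$ and $\Int W\cong\R^n$.

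The remaining case $n=4$ is where the argument stops being formal, and I expect it to be the main obstacle: one cannot conclude $\widehat W\cong S^4$ — that is precisely the open smooth four-dimensional Poincar\'e/Schoenflies question — so $\Int W$ is only immediately seen to be \emph{homeomorphic} to $\R^4$, via Freedman's topological surgery, which makes $W$ a topological disc. The plan is to upgrade this to a diffeomorphism by exploiting that $W$ is not an arbitrary contractible smooth $4$-manifold with $S^3$ boundary but a complementary domain of a smoothly embedded $S^3\subset S^4$: concretely, to exhibit $\Int W$ as an increasing union $D_1\subset D_2\subset\cdots$ of smoothly embedded $4$-discs with each $\Cl(D_{k+1}-D_k)$ a smooth product $S^3\times[0,1]$, and then to invoke the elementary fact that any such exhaustion is diffeomorphic to $\R^n$. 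Producing this exhaustion — equivalently, ruling out an exotic smooth structure on the end of $\Int W$ — is the delicate point, and it is exactly here that we use that we only ever need the weak Schoenflies conclusion $\Int W\cong\R^n$, never the stronger $W\cong D^n$.
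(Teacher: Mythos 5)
Your plan is genuinely different from the paper's, and, as you yourself flag, it has a real gap at $n=4$. The paper proves the theorem by a Mazur-style infinite swindle: with $A,B$ the closures of the two complementary regions, cap them to get closed manifolds $A',B'$ with $A'\# B' = S^n$, then form an explicit infinite connected sum $\R^n\#A'\#B'\#A'\#B'\#\cdots$ by gluing copies of $A$ and $B$ into a string of discs removed from $\R^n$; regrouping one way gives $\R^n$, the other way gives $\R^n\#A' = \Int A$, hence $\Int A\cong\R^n$. This is uniform in $n$, short, and needs nothing beyond the well-definedness of the infinite sum. Your route instead reduces to the statement that a compact contractible smooth $n$-manifold $W$ with $\partial W\cong S^{n-1}$ has $\Int W\cong\R^n$, and then handles dimensions one at a time; for $n\ne 4$ the cases are correct, though $n=5$ (via $\Theta_5=0$) and $n\ge 6$ (via the $h$-cobordism theorem) lean on heavy machinery where the swindle needs none.

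The gap: for $n=4$ the reduced statement ``contractible with $S^3$ boundary implies $\R^4$ interior'' is not known in general --- a priori it is as hard as the smooth $4$-dimensional Poincar\'e conjecture. You recognize this and say one must exploit that $W$ is specifically a complementary domain of a smoothly embedded $S^3\subset S^4$, and you propose to exhibit $\Int W$ as an increasing union of smoothly embedded discs with product cobordisms between them --- but you never produce that exhaustion, and your reduction has already discarded the embedding $W\hookrightarrow S^4$ you would need to do so. That missing construction is exactly what the swindle supplies (the infinite connected sum presentation of $\Int A$ is the exhaustion you are looking for), so as written the proposal is incomplete precisely in the one dimension where the theorem is delicate, and carrying out your $n=4$ plan would most naturally amount to rediscovering the swindle.
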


\begin{proof}
	The proof is by infinite swindle\footnote{The reader has probably seen an (invalid) infinite swindle joke proof that
		$0=1$, i.e., $0 = (1-1)+(1-1)+ \cdots = 1 +(-1+1)+(-1+1)+\cdots = 1 + 0 = 1$.
		Amazingly, this sort of proof is correct in a number of topological instances, three that I know of,
		and gives short magic proofs of powerful results.
		This paper uses all three instances I know of: Schoenflies, hyperplane linearization, and invertible 
		cobordisms.
		}. Let $A$ and $B$ be the closures of the two components of $S^n-f(S^{n-1})$.
	Let $A'$ and $B'$ be obtained from $A$ and $B$ by gluing a disc to the boundary.
	Note $A'\#B'  = S^n$.
	Consider the infinite connected sum $\R^n\#A'\#B'\# \cdots$.
	On the one hand, it is $\R^n\#(A'\#B')\# (A'\#B')\#\cdots = \R^n$.
	On the other hand it is $\R^n\#A'\#(B'\# A')\#(B'\# A')\#\cdots = \R^n\#A' = \Int A$.
	
	I'll be more precise about the infinite connected sum above.
	For $i=1,2,\ldots$, let $D_i\subset \R^n$ be a closed disc of radius $1/4$ around some point of distance $i+1/2$
	from the origin.
	Let $X = \R^n - \bigcup _{i=1}^\infty \Int D_i$. 
	Then we obtain the infinite connected sum by gluing a copy of $A$ to $\partial D_i$ for $i$ odd
	and gluing a copy of $B$ to $\partial D_i$ for $i$ even.
\end{proof}


\begin{theorem}\label{HLT}
	Suppose $\A$ is a tico in a smooth manifold $X$ and $Z$ is a connected component of $X-|\A|$ diffeomorphic to $\R^n$.
	We suppose $Z$ has compact closure.
	Let $N$ be a proper codimension one submanifold of $X$ in general position with $\A$ so that
	$N\cap Z$ is diffeomorphic to the disjoint union of $k$ copies of $\R^{n-1}$.
	Then $Z-N$ is diffeomorphic to the disjoint union of $k+1$ copies of $\R^n$.
	If $k=1$, the pair $(Z,N)$ is diffeomorphic to $\R^{n-1}\times (\R,0)$.
\end{theorem}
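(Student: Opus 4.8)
The plan is to follow the template of the proof of Theorem~\ref{schoen}: first carry out the homological and $\pi_1$ bookkeeping showing that cutting $Z$ along $N$ produces exactly $k+1$ contractible open pieces, each with a tame end, and then run an infinite swindle --- the ``hyperplane linearization'' instance advertised in the footnote to Theorem~\ref{schoen} --- to pin down the diffeomorphism types. Write $N\cap Z=P_1\sqcup\cdots\sqcup P_k$ with each $P_i\cong\R^{n-1}$ closed in $Z$; since $P_i$ is contractible it is two sided with trivial normal bundle, so it has a product neighborhood $P_i\times(-1,1)\subset Z$ and the closures in $Z$ of the components of $Z-N$ are smooth manifolds whose boundary is a union of the $P_i$. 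From $Z\cong\R^n$ and Alexander duality in the one point compactification, $Z-N$ has exactly $k+1$ components and each is acyclic; the dual graph (one vertex per component, one edge per separating $P_i$) is a tree, so a van Kampen argument over the product neighborhoods forces $\pi_1$ of each component to vanish. Hence every component of $Z-N$ is a contractible open $n$-manifold. (When $k=0$ there is nothing to prove.)

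Next I would record that the ends are tame. Because $\overline Z$ is compact and $\A$ is a tico with $N$ in general position, $\overline Z$ is a compact manifold with corners whose faces lie on sheets of $\A$, and $N\cap\overline Z$ is a properly embedded codimension one submanifold with corners transverse to those faces. Cutting $\overline Z$ along $N$ and rounding corners realizes each component $V$ of $Z-N$ as the interior of a \emph{compact} manifold with boundary, so $V$ is a contractible open $n$-manifold whose single end is collared. This is precisely the input the swindle needs, and it is what lets us dodge the $h$-cobordism theorem, which is unavailable when $n=4$.

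For the first assertion I would feed each such $V$ into an infinite connected sum built exactly as in the proof of Theorem~\ref{schoen}: glued one way it telescopes to $\R^n$, and glued the other way it telescopes to $V$, with the role of the two Schoenflies pieces now played by $V$ together with the union of the remaining slabs --- and those slabs, being the regions between parallel copies of $\R^{n-1}$, are standard $\R^{n-1}\times[0,1]$'s that reassemble to $\R^n$ because $Z\cong\R^n$. This yields $V\cong\R^n$, hence $Z-N$ is a disjoint union of $k+1$ copies of $\R^n$. For the last clause take $k=1$, so $Z=A\cup_P B$ with $P=N\cap Z\cong\R^{n-1}$ and $\partial A=\partial B=P$; I would rerun the same swindle \emph{relative to} $P$, performing the alternating gluings along standard half-space pairs sitting inside the product neighborhood $P\times(-1,1)$, to obtain $(A,P)\cong(\R^{n-1}\times[0,\infty),\R^{n-1}\times\{0\})$ and likewise for $(B,P)$; gluing these two half-space pairs along $P$, absorbing the boundary identification of $\R^{n-1}$ into a self-diffeomorphism of a half-space, gives $(Z,N\cap Z)\cong\R^{n-1}\times(\R,0)$.

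The step I expect to be the main obstacle is the relative swindle of the last paragraph: one must choose the product neighborhoods and the connect-sum regions so that every gluing is along a \emph{standard} ball pair $(D^n,D^{n-1})$ compatible with $P$, and smooth the corners of those ball pairs coherently, so that both telescopings carry the submanifold onto the submanifold. The homological bookkeeping and the tameness of the ends are routine, and the absolute swindle is a direct adaptation of the argument already given for Theorem~\ref{schoen}.
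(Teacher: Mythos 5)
Your plan is essentially to re-derive the hyperplane linearization theorem of \cite{CKS} from scratch, whereas the paper simply invokes it: for $n\ne 3$ the statement is \cite[Theorem 7.3]{CKS} applied to $Z\cong\R^n$ and the properly embedded $N\cap Z$, and the tameness you establish (each component of $Z-N$ is the interior of a compact manifold, because $\Cl Z$ is a compact manifold with corners cut transversally by $N$) is exactly the hypothesis that theorem requires. So your ``bookkeeping'' paragraphs are not a shortcut but a restatement of the input to a cited result. The real content of your proposal is the swindle in the last two paragraphs, and that is where the gap is. You say you will build the infinite sum ``exactly as in the proof of Theorem~\ref{schoen},'' i.e.\ as an ordinary connected sum along embedded $S^{n-1}$'s. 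But the Cantrell--Stallings swindle is not an ordinary connected sum swindle: it is a \emph{connected sum at infinity} (end sum) swindle, in which copies of the open pieces are glued along half-spaces emanating from their ends, not along interior spheres. Getting that operation to be well-defined, associative, to have $\R^n$ as a unit, and to carry the submanifold along in the relative version is precisely the nontrivial machinery \cite{CKS} was written to supply. You flag the ``relative swindle'' as the step you expect to be hard; in fact both the absolute and relative telescopings require this CSI framework, and the description you give (ordinary connect sums of ``$V$ together with the remaining slabs'') does not set it up correctly.

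A second gap: you make no distinction for $n=3$, but the paper does, because the cited hyperplane linearization theorem does not cover that dimension. For $n=3$ the paper uses the compactness of $\Cl Z$ to see that $\partial U\cap Z\cong S^2$, then smooth Schoenflies to straighten $Z-\Int U$ into a ball, and finally reduces to the ray-unknotting theorem \cite[Theorem 6.1]{CKS}. If you intend your swindle to work uniformly in all dimensions you would at least need to explain why the CSI swindle is valid in dimension $3$, where the usual proofs of hyperplane unknotting break down and one must take the detour through Schoenflies and ray unknotting that the paper takes. As it stands your argument neither cites the needed CKS results nor reconstructs the CSI technology they rest on, so the core of the proof is missing.
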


\begin{proof}
	If $n\ne 3$ this is immediate from the hyperplane linearization theorem in \cite[Theorem 7.3]{CKS}.
	So suppose $n=3$. 
	By Lemma \ref{tico_reg_nbhd} there is a tico regular \nbhd\ $U$ of $|\A|$
	and a diffeomorphism $h\colon \partial U\times [0,\infty) \to U-|\A|$
	so that $h^{-1}(N) = (\partial U\cap N) \times [0,\infty) $.
	Since $h$ trivializes the end of $Z\approx \R^3$ we must have $\partial U \cap Z$ diffeomorphic to the two sphere $S^2$.
	By smooth Schoenflies, $Z-\Int U$ is diffeomorphic to the 3 ball, so we have a diffeomorphism
	$g\colon \R^3 \to Z$ so that $g(x) \in U$ if and only if $|x|\ge 1$ and if $|x|\ge 1$ then $g(x) = h(g(x/|x|,|x|-1))$.
	The result now follows from \cite[Theorem 6.1]{CKS} since for any $x\in \partial U \cap N$, 
	the ray $h(x\times [0,\infty))$ is  unknotted in $N$ and $Z$.
\end{proof}

\section{Convex Sets}

The following allows us to take a piecewise linear convex set and perturb it to be smooth.

\begin{lemma}\label{convex_smooth}
	Suppose $l_i$, $i=1,\ldots, k$ is a collection of degree 1 polynomials in $n$ variables and 
	$L\subset \R^n$ is an affine subset.
	Trivially, 
	$$X=\{x\in L \mid l_i(x)\ge 0 \ \mathit{for}\ i=1,\ldots,k\}$$
	 is a convex set although  probably not a smooth manifold with boundary.
	For any $\epsilon>0$ consider the set $X_\epsilon = \{x\in X \mid \prod_{i=1}^k l_i(x)\ge \epsilon\}$.
	Then $X_\epsilon $ is convex and for small enough $\epsilon$, $X_\epsilon$ is a smooth submanifold of $L$.
\end{lemma}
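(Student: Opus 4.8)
The plan is to reduce everything to the concavity of $\log$ composed with affine functions. First I would dispose of a boundary issue. If $x$ satisfies $\prod_{i=1}^k l_i(x)\ge\epsilon>0$ then every $l_i(x)>0$, so $X_\epsilon$ is contained in the open set $W=\{x\in L\mid l_i(x)>0\text{ for all }i\}$, and in fact $X_\epsilon=\{x\in W\mid\phi(x)\ge\epsilon\}$ where $\phi=\prod_{i=1}^k l_i$. Since $W$ is an intersection of open half-spaces of $L$ it is open and convex. Moreover $X_\epsilon$ is closed in $L$: if $x_j\in X_\epsilon$ and $x_j\to x$ in $L$ then $\phi(x)\ge\epsilon$ by continuity of the polynomial $\phi$ on all of $L$, forcing $x\in W$ and hence $x\in X_\epsilon$.

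For convexity I would use that each $\log l_i$ is smooth and concave on $W$: the Hessian of $\log l_i$ equals $-l_i^{-2}\,\nabla l_i\,(\nabla l_i)^{\mathsf T}$, which is negative semidefinite where $l_i>0$. Hence $\log\phi=\sum_{i=1}^k\log l_i$ is concave on the convex set $W$, and $X_\epsilon=\{x\in W\mid\log\phi(x)\ge\log\epsilon\}$ is a superlevel set of a concave function on a convex set, so it is convex.

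For smoothness I would exploit the same concavity. On $W$ we have $\phi>0$ and $d\phi=\phi\,d(\log\phi)$, so the critical points of $\phi|_W$ coincide with those of $\log\phi$. Because $\log\phi$ is concave, any critical point of it is a global maximum, so the set of critical values of $\phi|_W$ is either empty or the single value $\sup_W\phi$. If $W=\emptyset$ then $X_\epsilon=\emptyset$ and there is nothing to prove; otherwise $\sup_W\phi>0$, and for any $\epsilon$ with $0<\epsilon<\sup_W\phi$ the number $\epsilon$ is a regular value of $\phi|_W$. Then $X_\epsilon=(\phi|_W)^{-1}([\epsilon,\infty))$ is a smooth submanifold with boundary of $W$, hence of $L$, and by the first paragraph it is closed in $L$, so properly embedded.

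The only real content is the observation that a concave function has all of its critical points at its global maximum, so at most one ``bad'' value of $\epsilon$ can arise; the rest is routine. A minor point to handle carefully is that $\sup_W\phi$ may be $+\infty$ or may fail to be attained, but neither possibility causes trouble — in the first case every $\epsilon>0$ is a regular value, and in the second case $\phi|_W$ has no critical points at all, so again every $\epsilon>0$ works.
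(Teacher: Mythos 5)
Your argument is correct, and while the convexity half is essentially the paper's argument in conceptual disguise, the smoothness half takes a genuinely different and tidier route. For convexity, the paper computes $h'$ and $h''$ along a segment and observes $h''(t)<0$ wherever $h'(t)=0$; that is precisely the verification that $\log\phi$ is strictly concave along the segment, which you instead deduce at a glance from $\log\phi=\sum_i\log l_i$ being a sum of concave functions. For smoothness, the paper invokes Sard's theorem (the critical values have measure zero) together with semialgebraicity of the critical-value set to conclude there are only finitely many critical values, so small $\epsilon$ are regular. You instead observe that on $W=\{x\in L\mid l_i(x)>0\text{ for all }i\}$ the critical points of $\phi$ agree with those of $\log\phi$, and a critical point of a concave function on a convex set is a global maximum, so $\phi|_W$ has at most one critical value, namely $\sup_W\phi$ if attained. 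This dispenses with Sard and with real algebraic geometry, and it pins down exactly which $\epsilon$ could fail rather than merely asserting that small ones work; the trade-off is that the paper's route would survive replacing the $l_i$ by arbitrary polynomials, whereas yours leans on the affine structure that makes $\log\phi$ concave. One cosmetic slip: in your closedness check, $\phi(x)\ge\epsilon$ alone does not force $x\in W$ --- you need first that $x\in X$ (which holds since $X$ is closed in $L$ and $x_j\in X_\epsilon\subset X$), after which $l_i(x)\ge 0$ for all $i$ together with $\phi(x)>0$ gives $l_i(x)>0$ for all $i$. Since closedness of $X_\epsilon$ is not used for the lemma's stated conclusion, this does not affect the argument.
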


\begin{proof}
	By Sard's theorem, the critical values of $f(x) = \prod_{i=1}^k l_i(x)$ have measure 0.
	But the critical values of $f$ are also a semialgebraic set (being the polynomial image of the algebraic set of critical points),
	and hence must be a finite set. Consequently, small enough $\epsilon >0$ are regular values of $f$
	and hence $X_\epsilon$, being $X\cap f^{-1}([\epsilon,\infty))$ is a smooth submanifold.
	
	The convexity of $X_\epsilon$ is a simple calculus exercise.
	Suppose $x,y\in X_\epsilon$ are distinct points.  I claim that all critical points of $h(t) = f(tx+(1-t)y)$ for $t\in (0,1)$
	are local maxima.
	Consequently, $tx+(1-t)y\in X_\epsilon$ for all $t\in [0,1]$ and convexity is established.
	
	To see the calculus claim, note $l_i(tx+(1-t)y)= a_it+b_i$ for some $a_i$ and $b_i$.
	We may as well assume some $a_i$ is nonzero, otherwise $h$ is constant.
	Then $h'(t) = h(t) \Sigma_{i=1}^k a_i/(a_it+b_i)$ and $h''(t) = h(t)((\Sigma_{i=1}^k a_i/(a_it+b_i))^2 - \Sigma_{i=1}^k a_i^2/(a_it+b_i)^2)$.
	So if $h'(t)=0$ then $h''(t) = -h(t)\Sigma_{i=1}^k a_i^2/(a_it+b_i)^2 < 0$.
	So by the second derivative test we have a local maximum.
\end{proof}

\section{Distancelike Functions}

Suppose $M$ is a smooth manifold,  $N$ is a smooth submanifold of $M$, and
$\rho\colon U\to [0, \infty)$ is a smooth function from an
open \nbhd\ $U$ of $N$ in $M$.
We say that $\rho$ is {\em \MB}\footnote{I originally called these Morse-Bott functions but Wikipedia tells me that name is already in
	use for a different setup, more general in that you don't have a local minimum.
	So I'll call them \MB\ functions instead.}
  around $N$ if:
\begin{itemize}
	\item At every point of $N$
	the Hessian of $\rho$ has rank equal to the codimension of $N$.
	\item $N\subset \rho^{-1}(0)$.
\end{itemize}
The following Lemma gives a local description of a \MB\ function as well as other Taylor's theorem type results.

\begin{lemma}\label{division}
	Let $V$ be an open subset of $N\times \R^m$
	for some smooth manifold $N$.
	Suppose $f\colon V\to \R$ is a smooth function
	which vanishes on $N\times 0$. Then:
	\begin{enumerate}
		\item There are smooth functions $g_i\colon V\to \R$
		so that $f(x,y) = \Sigma _{i=1}^m y_ig_i(x,y)$.
		\item If  $f$ is nonnegative,
		there are smooth functions $h_{ij}\colon V\to \R$ so that
		$f(x,y) = \Sigma_{i=1}^m\Sigma_{j=1}^m y_iy_j h_{ij}(x,y)$.
		In other words, there is a smooth  $m\times m$ symmetric matrix valued function $L$ on $V$
		so that $f(x,y) = y^TL(x,y)y$.
		\item If $f$ is \MB\ around $N\times 0$
		then:
		\begin{enumerate}
			\item $N\times 0$ is an open subset of $f^{-1}(0)$.
			\item If $L$ is as above then for each $x\in N$ the
			quadratic form
			$y\mapsto y^TL(x,0)y$
			on $\R^m$ is positive definite.
		\end{enumerate}
	\end{enumerate}
\end{lemma}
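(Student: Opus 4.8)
The plan is to prove the three parts in order, since each builds on the previous one. For part (1), I would reduce to the standard Hadamard-lemma computation: for fixed $(x,y)\in V$ the segment from $(x,0)$ to $(x,y)$ lies in $V$ after shrinking $V$ to be fiberwise convex (which costs no generality, as the statement is local in the $\R^m$ factor and we may cover $V$ by such pieces and patch — though in fact the integral formula works verbatim on any star-shaped-in-$y$ neighborhood). Writing $f(x,y)-f(x,0)=\int_0^1 \frac{d}{dt}f(x,ty)\,dt$ and using $f(x,0)=0$ gives $f(x,y)=\Sigma_{i=1}^m y_i g_i(x,y)$ with $g_i(x,y)=\int_0^1 (\partial f/\partial y_i)(x,ty)\,dt$, which is smooth by differentiation under the integral sign. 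Note Lemma \ref{division1} is just the case $N=\R^{n-1}$, $m=1$, $i=1$ with coordinates permuted, so this also discharges that earlier debt.

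For part (2), I would apply part (1) twice. First write $f(x,y)=\Sigma_i y_i g_i(x,y)$. The functions $g_i$ need not vanish on $N\times 0$, so I cannot simply iterate; instead I use nonnegativity. Since $f\ge 0$ and $f$ vanishes to first order along $N\times 0$ (each $g_i$ is finite there), the first-order Taylor coefficients must vanish: $g_i(x,0)=\tfrac12\,\partial f/\partial y_i(x,0)=0$ because $(x,0)$ is an interior minimum of $y\mapsto f(x,y)$. Hence each $g_i$ vanishes on $N\times 0$, so part (1) applies to $g_i$, yielding $g_i(x,y)=\Sigma_j y_j h_{ij}(x,y)$ and therefore $f(x,y)=\Sigma_{i,j} y_i y_j h_{ij}(x,y)$. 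Symmetrizing, replace $h_{ij}$ by $\tfrac12(h_{ij}+h_{ji})$ and set $L=(h_{ij})$; this does not change the sum. This gives the symmetric matrix-valued $L$ with $f(x,y)=y^TL(x,y)y$.

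For part (3), assume $f$ is \MB\ around $N\times 0$. For (3a), the Hessian of $f$ at a point $(x,0)$ has rank $m$ (the codimension of $N\times 0$), so along the fiber direction it is nondegenerate; combined with $f\ge 0$ near $N\times 0$, the point $(x,0)$ is a strict local minimum in the transverse directions, and since $f$ already vanishes on all of $N\times 0$, a neighborhood of $(x,0)$ in $f^{-1}(0)$ cannot meet any point with $y\ne 0$ — so $N\times 0$ is open in $f^{-1}(0)$. For (3b), I would compute the fiberwise Hessian from the representation $f(x,y)=y^TL(x,y)y$: differentiating twice in $y$ at $y=0$, all terms involving derivatives of $L$ are killed by the factor $y$, leaving $\partial^2 f/\partial y_i\partial y_j(x,0)=L_{ij}(x,0)+L_{ji}(x,0)=2L_{ij}(x,0)$. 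Thus $2L(x,0)$ is exactly the fiberwise Hessian of $f$ at $(x,0)$, which has rank $m$ by the \MB\ hypothesis and is positive semidefinite since $(x,0)$ is a local minimum; a positive semidefinite symmetric matrix of full rank is positive definite, giving the claim.

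The main obstacle is the bookkeeping in part (2): one must notice that nonnegativity forces the first-order coefficients $g_i(x,0)$ to vanish — this is the only place positivity enters and is what lets part (1) be reapplied — and then handle the symmetrization cleanly. Everything else is the standard Hadamard-lemma machinery plus elementary linear algebra (full-rank positive semidefinite implies positive definite). One minor care point throughout: the integral formulas require the fibers to be convex, so I would either state at the outset that it suffices to prove the lemma on a neighborhood basis of fiberwise-convex open sets, or simply shrink $V$ at the start, noting the conclusions are local.
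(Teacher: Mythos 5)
Your proposal is correct and follows essentially the same route as the paper: integral/Hadamard formula for (1), observe that nonnegativity forces $g_i(x,0)=0$ so (1) can be iterated to get (2), then read off the fiberwise Hessian $2L(x,0)$ and combine rank $m$ with positive semidefiniteness to get (3). One small slip: $g_i(x,0)=\int_0^1 \partial f/\partial y_i(x,0)\,dt=\partial f/\partial y_i(x,0)$, with no factor of $\tfrac12$, but the conclusion $g_i(x,0)=0$ is unaffected.
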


\begin{proof}
	To prove 1, 
	it suffices to find the $g_i$ locally and
	piece together with a smooth partition of unity.
	Locally away from $N\times 0$ if $y_i\ne 0$ we may take
	$g_i(x,y) = f(x,y)/y_i$ and $g_j=0$ for $j\ne i$.
	Near a point of $N\times 0$, we may take
	$g_i(x,y) = \int_0^1 \partial f/\partial y_i(x,ty)\,dt$.
	Note that 
	\begin{eqnarray*}
		\Sigma _{i=1}^m y_ig_i(x,y) &=& \Sigma _{i=1}^m y_i\int_0^1 \partial f/\partial y_i(x,ty)\,dt\\
		&=& \int_0^1 \Sigma _{i=1}^m y_i\partial f/\partial y_i(x,ty)\,dt\\
		&=& \int_0^1  d/dt ( f(x,ty))\,dt\\
		&=& f(x,y) - f(x,0) = f(x,y).
	\end{eqnarray*}
	
	To prove 2, note that each $g_i$ must vanish on $N\times 0$,
	otherwise if we take $(x,y)$ near $N\times 0$ with $y_j=0$ for $ j\neq i$
	then $f(x,y) = y_ig_i(x,y)$ takes on negative values.
	So applying 1 to each $g_i$ we get
	$g_i(x,y) = \Sigma_{j=1}^m y_jh_{ij}(x,y)$ and 2 follows.
	We get $L$ by setting the $i,j$-th entry of $L(x,y)$ to $(h_{ij}(x,y)+h_{ji}(x,y))/2$.
	
	To prove 3, we may as well suppose we are in the local case $N=\R^n$.
	Note that 
	$\partial^2 f/\partial x_i\partial x_j(x,0) = 0$ and $\partial^2 f/\partial x_i\partial y_j(x,0) = 0$
	since $f$ vanishes on $N\times 0$.
	On the other hand 
	\begin{eqnarray*}
		\partial f/\partial y_i(x,y) &=&
		\Sigma_{j=1 }^m y_j(h_{ij}(x,y) +h_{ji}(x,y) + \Sigma_{k=1}^m y_kg_{ijk}(x,y))\\
		\partial^2 f/\partial y_i\partial y_j(x,y) &=&
		h_{ij}(x,y) +h_{ji}(x,y)+ \Sigma_{k=1}^m y_k g'_{ijk}(x,y)
	\end{eqnarray*}
	for some smooth functions $g_{ijk}$ and $g'_{ijk}$.
	Thus $\partial^2 f/\partial x_i\partial x_j(x,0) = h_{ij}(x,0)+h_{ji}(x,0)$.
	So the Hessian of $f$ is 
	$\begin{bmatrix}
	0&0\\
	0&2L(x,0)\\ 
	\end{bmatrix}$ at $(x,0)\in N\times 0$.  Since the Hessian has rank $m$
	on $N\times 0$ we know $L(x,0)$ is nonsingular,
	and hence the quadratic form $y\mapsto y^TL(x,0)y$ is nondegenerate.
	If this quadratic form is not positive definite there
	is a $y$ so that $y^TL(x,0)y<0$.
	But then for small enough $t$ we have
	$$f(x,ty) = t^2 y^TL(x,ty)y \approx t^2 y^TL(x,0)y <0$$
	contradicting the nonnegativity of $f$.
	So the quadratic form $y\mapsto y^TL(x,0)y$ is positive definite.
	Note the quadratic form must then by continuity be positive definite for all $(x,y)$ near $N\times 0$ which means $f(x,y)>0$
	for $(x,y)$ near $N\times 0$ but not on $N\times 0$.
	Consequently $N\times 0$ is an open subset of $f^{-1}(0)$.
\end{proof}

\begin{lemma}\label{paramMorseBott}
	Suppose $\rho\colon U\to [0,\infty)$ is \MB\ around $N$ and $\pi\colon U\to N$ is a smooth retraction.
	Moreover, suppose the normal bundle of $N$ is trivial.
	Then there is a smooth embedding $h\colon V\to U$ onto a \nbhd\ of $N$ so that:
	\begin{enumerate}
		\item $V$ is a \nbhd\ of $N\times 0$ in $N\times \R^k$.
		\item $\pi h(x,y) = h(x,0)$ for all $(x,y)\in V$.
		\item $\rho h(x,y) = |y|^2$ for all $(x,y)\in V$. 
	\end{enumerate}
\end{lemma}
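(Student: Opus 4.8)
The plan is to use Lemma~\ref{division} to put $\rho$ in normal form fiberwise, then produce $h$ by a square-root / Gram--Schmidt type diagonalization of the symmetric matrix $L$ supplied by part~2 of that Lemma, and finally patch in the retraction $\pi$ by a change of coordinates along the fibers. First I would use the triviality of the normal bundle of $N$: picking a trivialization and the tubular neighborhood theorem gives a diffeomorphism from a neighborhood of $N$ in $U$ onto a neighborhood $W$ of $N\times 0$ in $N\times\R^k$, carrying $N$ to $N\times 0$; composing $\rho$ with this diffeomorphism I may as well assume $U=W\subset N\times\R^k$ and $\rho$ is a nonnegative smooth function vanishing on $N\times 0$ and \MB\ there. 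Apply Lemma~\ref{division}(2) to write $\rho(x,y)=y^TL(x,y)y$ with $L$ a smooth symmetric-matrix-valued function, and Lemma~\ref{division}(3b) to know that $L(x,0)$ is positive definite for each $x\in N$, hence (by continuity and shrinking $W$) that $L(x,y)$ is positive definite on all of $W$.

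Next I would diagonalize. Since $L(x,y)$ is a smooth, symmetric, positive-definite matrix function, it has a smooth symmetric positive-definite square root $L(x,y)^{1/2}$ (this follows from the holomorphic/smooth functional calculus, or concretely from $L^{1/2}=\frac{1}{2\pi i}\oint z^{1/2}(zI-L)^{-1}\,dz$ over a contour enclosing the spectrum, which stays away from $0$ locally; smoothness in $(x,y)$ is then clear). Set $\Phi(x,y)=(x,\,L(x,y)^{1/2}y)$. Then $\Phi$ is smooth and its derivative at $(x,0)$ is $\begin{bmatrix}I&0\\ *&L(x,0)^{1/2}\end{bmatrix}$, which is invertible, so by the inverse function theorem $\Phi$ is a diffeomorphism from a possibly smaller neighborhood $W'$ of $N\times 0$ onto a neighborhood $V_0$ of $N\times 0$. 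In the new coordinates $(x,z)=\Phi(x,y)$ we have $\rho=|z|^2$ exactly, because $y^TL(x,y)y = y^TL^{1/2}L^{1/2}y = |L^{1/2}y|^2 = |z|^2$. This already gives an embedding satisfying conditions~1 and~3.

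The remaining issue is condition~2, intertwining with the given retraction $\pi$; this is the step I expect to be the main obstacle, since a priori $\pi$ has nothing to do with the tubular-neighborhood projection used above. I would handle it by first replacing the arbitrary $\pi$-fibers with the ones compatible with $\rho$: note that on $N\times 0$ the retraction $\pi$ is the identity, and near $N\times 0$ the map $(x,y)\mapsto(\pi(x,y),y)$ (in whatever coordinates we have) is a diffeomorphism onto a neighborhood of $N\times 0$ by the inverse function theorem, since its derivative along $N\times 0$ is $\begin{bmatrix}I&*\\0&I\end{bmatrix}$. Pulling everything back by the inverse of this diffeomorphism, we may assume from the start that $\pi(x,y)=x$ is the standard projection, at the cost of shrinking the neighborhood once more; the computations of Lemma~\ref{division} and the diagonalization above are unaffected because they are carried out fiberwise over $N$. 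Once $\pi$ is the standard projection, the coordinate change $\Phi(x,y)=(x,L(x,y)^{1/2}y)$ preserves the first coordinate, so $\pi\circ\Phi^{-1}(x,z)=x=\pi\circ\Phi^{-1}(x,0)$, giving condition~2. Taking $h=\Phi^{-1}$ on a suitable neighborhood $V$ of $N\times 0$ in $N\times\R^k$ then yields all three properties, and $h$ is an embedding onto a neighborhood of $N$ since $\Phi$ was a diffeomorphism onto its image.
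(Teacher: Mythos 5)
Your proof is correct, and it takes a genuinely different route from the paper's. The paper proves the parameterized Morse Lemma by the classical inductive completing-the-square argument: assuming $\rho h'(x,y) = \sum_{i<\ell} y_i^2 + \sum_{i,j\ge\ell} y_iy_jg_{ij}(x,y)$ with $\pi h'(x,y)=h'(x,0)$, it introduces the fiberwise change of variables $y'_\ell = \sqrt{g_{\ell\ell}}\,y_\ell + \sum_{i>\ell}y_i g_{\ell i}/\sqrt{g_{\ell\ell}}$, fixing $x$ and the other $y_i$, and iterates. You instead diagonalize in one stroke by taking the smooth symmetric square root $L^{1/2}$ of the positive-definite matrix $L$ from Lemma~\ref{division}(2)--(3) and setting $\Phi(x,y)=(x,L(x,y)^{1/2}y)$. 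These amount to the same computation organized differently: the paper's induction is essentially a fiberwise Cholesky-type factorization $L=R^TR$ (and indeed preserves the $x$-coordinate at each step for exactly the reason yours does), while you use the symmetric factorization. The paper's version is more elementary in that it needs no functional calculus; yours is shorter and avoids the book-keeping of the induction.

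One thing you do that the paper leaves implicit is worth flagging: the inductive hypothesis in the paper already assumes a chart in which $\pi$ is the projection $(x,y)\mapsto x$, and the base case of that induction is never spelled out. Your preliminary normalization via $\Psi(x,y)=(\pi(x,y),y)$, whose derivative on $N\times 0$ is unipotent block-triangular, is exactly what is needed to establish that base case, so your writeup is actually slightly more complete on this point. Minor remark: in your displayed Jacobian of $\Phi$ at $(x,0)$ the lower-left block marked $*$ is in fact $0$, since $L^{1/2}(x,y)y$ vanishes identically on $y=0$; this does not affect the invertibility argument, which only needs the block-triangular structure.
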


\begin{proof}
	This is just a parameterized Morse Lemma.
	By induction we may assume there is a smooth embedding $h'\colon V'\to U$ onto a \nbhd\ of $N$ so that:
	\begin{enumerate}
		\item $V'$ is a \nbhd\ of $N\times 0$ in $N\times \R^k$.
		\item $\pi h'(x,y) = h(x,0)$ for all $(x,y)\in V'$.
		\item $\rho h'(x,y) = \Sigma _{i=1}^{\ell-1} y_i^2 + \Sigma_{i=\ell}^k\Sigma_{j=\ell}^k y_iy_j g_{ij}(x,y)$ for some smooth functions $g_{ij}$ and all $(x,y)\in V$. 
	\end{enumerate}
	Note that by positive definiteness, $g_{\ell\ell}(x,0) > 0$ for all $x\in N$, so we may as well shrink $V'$
	so that $g_{\ell\ell}(x,y) > 0$ for all $(x,y)\in V'$.
	We also may as well suppose that $g_{ij}=g_{ji}$.
	Define $f\colon V'\to N\times \R^k$ by $f(x,y) = (x,y')$ where $y'_i=y_i$ for $i\ne \ell$ and
	$y'_\ell = \sqrt{g_{\ell\ell}(x,y)}y_\ell + \Sigma_{i=\ell+1}^k y_i g_{\ell i}(x,y)/\sqrt{g_{\ell\ell}(x,y)}$.
	By the inverse function theorem we may, after shrinking $V'$, assume that $f$ gives a diffeomorphism
	from $V'$ to a \nbhd\ $V''$ of $N\times 0$.
	Define $h''\colon V''\to U$ by $h'' = h'f^{-1}$.
	Then $\rho h''(x,y) = \Sigma _{i=1}^{\ell} y_i^2 + \Sigma_{i=\ell+1}^k\Sigma_{j=\ell+1}^k y_iy_j g''_{ij}(x,y)$ for some smooth functions $g''_{ij}$, so we are done by induction.
\end{proof}

\section{Simplicial Stratified Sets}

The standard $n$ simplex $\Delta^n$ is $\{x\in \R^{n+1}\mid x_i\ge 0,\ 
\Sigma_{i=0}^n x_i= 1\}$.
It has $n+1$ vertices $e_0,\ldots,e_n$ where $e_i$ has a 1 in the $i$-th cooordinate and all other coordinates 0.
Any face of the simplex corresponds to a nonempty subset $A\subset \{0,\ldots,n \}$ where the face is the convex hull
of $\{e_i\mid i\in A\}$.  This face is also $\mu_A^{-1}(0)$ where $\mu_A\colon \Delta^n\to \R$ is the linear function
$\mu_A(x) = \Sigma _{i\not\in A} x_i$.
An open face of the simplex will be those points in a face which are not in any lower dimensional face.
It is defined by $\mu_A(x) = 0$ and $x_i>0$ for $i\in A$.

A stratified subset $Z$ of a smooth manifold $X$ is 
a collection of disjoint smooth submanifolds called strata.
We ask that if $S$ and $T$ are strata and $S\cap \Cl T$ is nonempty, then $S\subset \Cl T$.
The notation $S\prec T$ will mean that $S\ne T$ and $S\subset \Cl T$.
We also require local finiteness, any $x\in X$ has a \nbhd\ which intersects only finitely many strata.
We also ask that the union of the strata be a closed set.
In this paper we specify that all strata have empty boundary and no stratum intersects the boundary of $X$,
although outside this paper it is often useful to allow this.

We say a stratified subset $Z$ of $X$ is 
\emph{simplical}
if for each stratum $S$ there is a diffeomorphism 
$f\colon \Delta^k\to \Cl S$ so that for each open face $\sigma$
of $\Delta^k$, $f(\sigma)$ is a stratum of $Z$.
(We call $f$ a parameterization of the stratum $S$.)
One example of a simplicial stratified set is a smooth triangulation.

Whitney invented some conditions A and B on stratified sets which make them tractable to work with,
see \cite{GWPL} for example.
A simplicial stratified set must satisfy the
Whitney conditions,  this is because the open faces of a simplex
trivially satisfy the Whitney conditions.

Since a simplicial stratified set satisfies the Whitney conditions,
we have Thom's tubular data which we shall use to construct a tico.
We find it easier at one point to require a bit more from this tubular data
than is usual.
In particular, the tubular data includes for each stratum $S$ a function $\rho_S$
which is \MB\ around $S$,
as well as a smooth local retraction $\pi_S$ to the stratum $S$.
We find it useful to be able to specify the functions $\rho_S$ in advance, and then
afterwards construct the rest of the tubular data $\pi_S$ satisfying the compatibility conditions.
Perhaps this result is in the literature somewhere, but in any case I prove it in an appendix.

We can now state our strengthened result on existence of tubular data,
which follows immediately from Lemma \ref{piconstruct} and Lemma \ref{paramMorseBott}.

\begin{theorem}\label{piconstruct0}
	Let $X$ be a simplicial stratified subset of a smooth manifold $M$
	and suppose that for each stratum $S$ of $X$ we choose a
	\MB\
	function $\rho_S\colon U_S\to [0,\infty)$
	around  $S$. 
	Then, after perhaps  shrinking the \nbhd s $U_S$, there
	are smooth retractions $\pi_S\colon U_S\to S$ 
	and smooth embeddings $\nu_S\colon U_S \to S\times \R^{\dim X-\dim S}$
	onto a \nbhd\ $V_S$ of $S\times 0$
	so that
	\begin{enumerate}
		\item If $T\prec S$ then $U_S\cap U_T= \pi_S^{-1}( S\cap U_T)$.
		\item If $T\prec S$ then $\pi_T\pi_S = \pi_T$ and
		$\rho_T\pi_S = \rho_T$ on $U_S\cap U_T$.
		\item If $U_S\cap U_T$ is nonempty, then either $S\prec T$,
		$T\prec S$, or $S=T$.
		\item If $T_1\prec T_2\prec\cdots\prec T_\ell\prec S$ then
		$$(\rho_{T_1},\rho_{T_2},\ldots,\rho_{T_\ell},\pi_{T_1})
		\colon S\cap U_{T_1}\cap\cdots\cap U_{T_\ell} \to
		(0,\infty)^\ell\times T_1$$ is a submersion.
		\item $\nu_S\pi_S\nu_S^{-1}(x,y) = (x,0)$ for all $(x,y)\in V_S$.
		\item $\rho_S\nu_S^{-1}(x,y) = |y|^2$ for all $(x,y)\in V_S$.
		\item There are smooth $\gamma_S\colon S\to (0,\infty)$ so that 
		$V_S = \{(x,y)\in S\times \R^{\dim X-\dim S}\mid \gamma_S(x)>|y|^2  \}$.
	\end{enumerate}
\end{theorem}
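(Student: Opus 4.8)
The plan is to read off conditions (1)--(4) from Lemma~\ref{piconstruct}, conditions (5)--(6) from the parameterized Morse lemma (Lemma~\ref{paramMorseBott}), and then condition~(7) from one further controlled shrinking, checking along the way that this shrinking preserves (1)--(4). The genuinely new content sits in Lemma~\ref{piconstruct}: applied to the prescribed \MB\ functions $\rho_S$, it supplies, after shrinking the $U_S$, smooth retractions $\pi_S\colon U_S\to S$ satisfying (1)--(4) for exactly these $\rho_S$. Everything after that is assembly.

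Fix a stratum $S$ and apply Lemma~\ref{paramMorseBott} with $N=S$, $\rho=\rho_S$, and $\pi=\pi_S$. Its hypotheses hold: $\rho_S$ is \MB\ around $S$ by assumption; $\pi_S$ is a smooth retraction by the previous paragraph; and the normal bundle of $S$ is trivial, because the parameterization of the stratum $S$ restricts to a diffeomorphism from the interior of a simplex onto $S$, so $S$ is diffeomorphic to $\R^{\dim S}$ and in particular contractible. The lemma produces a \nbhd\ $V_S'$ of $S\times 0$ in $S\times\R^{\dim X-\dim S}$ and a smooth embedding $h_S\colon V_S'\to U_S$ onto a \nbhd\ of $S$ with $\pi_S h_S(x,y)=h_S(x,0)$ and $\rho_S h_S(x,y)=|y|^2$. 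Shrinking $U_S$ to $h_S(V_S')$ and putting $\nu_S=h_S^{-1}$ gives (5) and (6) at once.

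To obtain (7) I would shrink once more. Choose a smooth $\gamma_S\colon S\to(0,\infty)$ small enough that $\{(x,y)\in S\times\R^{\dim X-\dim S}\mid |y|^2<\gamma_S(x)\}\subset V_S'$; this is possible since $V_S'$ is an open \nbhd\ of $S\times 0$, so its fiberwise radius is a positive lower semicontinuous function on $S$ and admits a smooth positive minorant $\gamma_S$ by a partition-of-unity argument. Redefine $V_S=\{(x,y)\mid |y|^2<\gamma_S(x)\}$ and $U_S=\nu_S^{-1}(V_S)$. By (5) and (6) this final $U_S$ equals $\{z\in U_S\mid \rho_S(z)<\gamma_S(\pi_S(z))\}$, which is precisely condition~(7).

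It remains to see that these successive shrinkings of the $U_S$ leave (1)--(4) intact; this is the only step needing care, and it is routine because every final $U_S$ has the sublevel form $\{\rho_S<\gamma_S\circ\pi_S\}$ within its domain. Conditions (2) and (3) pass to open subsets for free --- (2) is a pointwise identity on overlaps, and (3) only improves under shrinking --- and (4) stays a submersion when its domain $S\cap U_{T_1}\cap\cdots\cap U_{T_\ell}$ is replaced by a smaller open subset of $S$. For (1), take $T\prec S$ and $z\in U_S$; using the original form of (1) to remain inside the original domains and the control identities $\rho_T\pi_S=\rho_T$ and $\pi_T\pi_S=\pi_T$ (which survive on the smaller overlap), the condition $z\in U_T$ is equivalent to $\rho_T(z)<\gamma_T(\pi_T(z))$, equivalent to $\rho_T(\pi_S(z))<\gamma_T(\pi_T(\pi_S(z)))$, equivalent to $\pi_S(z)\in U_T$, whence $U_S\cap U_T=\pi_S^{-1}(S\cap U_T)$. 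So the whole theorem reduces to Lemma~\ref{piconstruct}: that is where the real obstacle --- constructing retractions compatible with prescribed \MB\ functions and satisfying the control conditions --- lies, and it is handled in the appendix.
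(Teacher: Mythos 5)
Your strategy is the one the paper intends: Lemma~\ref{piconstruct} (with $q$ a constant map and $Y=\emptyset$) yields retractions $\pi_S$ on shrunken $U_S$ satisfying (1)--(4) for the prescribed $\rho_S$; Lemma~\ref{paramMorseBott} then normalizes each pair $(\pi_S,\rho_S)$ to produce $\nu_S$ together with (5) and (6); a final shrinking gives (7). This is what the paper means by saying the theorem ``follows immediately'' from those two lemmas, and your observation that each stratum is diffeomorphic to a Euclidean space, hence has trivial normal bundle, is exactly the right justification for invoking Lemma~\ref{paramMorseBott}.

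There is, however, a gap in the verification that the last shrinking preserves (1). Write $U_T^{(0)}$ for the \nbhd\ coming out of Lemma~\ref{piconstruct} and $\widetilde U_T=h_T(V_T')$ for the domain of $\nu_T$; your final $U_T$ is the sublevel set $\{\rho_T<\gamma_T\circ\pi_T\}$ taken \emph{inside} $\widetilde U_T$, and in general $\widetilde U_T$ is a strictly smaller open \nbhd\ of $T$, not itself given by a sublevel condition in $U_T^{(0)}$. Your chain of equivalences replaces ``$z\in U_T$'' (respectively ``$\pi_S(z)\in U_T$'') by the inequality $\rho_T<\gamma_T\circ\pi_T$, but that inequality characterizes membership in the final $U_T$ only for points already known to lie in $\widetilde U_T$. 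Concretely, in the direction $\pi_S^{-1}(S\cap U_T)\subset U_T$, the old conditions (1) and (2) give $z\in U_T^{(0)}$ with $\rho_T(z)<\gamma_T(\pi_T(z))$, but one still must show $z\in\widetilde U_T$; your constraint $\{(x,y)\mid |y|^2<\gamma_T(x)\}\subset V_T'$ does not supply this, since it only controls the sublevel set \emph{within} $\widetilde U_T$, which is vacuous. The missing ingredient is the properness you never invoke, namely Lemma~\ref{piconstruct}(5) (built on Lemma~\ref{proper_rho}): because $\pi_T\times\rho_T$ is proper on $U_T^{(0)}$, $T\subset\widetilde U_T$, and $\rho_T^{-1}(0)\cap U_T^{(0)}=T$, the function $x\mapsto\inf\{\rho_T(z)\mid z\in U_T^{(0)}\setminus\widetilde U_T,\ \pi_T(z)=x\}$ is positive and lower semicontinuous on $T$. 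Shrinking $\gamma_T$ below a smooth positive minorant of it forces $\{z\in U_T^{(0)}\mid \rho_T(z)<\gamma_T(\pi_T(z))\}\subset\widetilde U_T$, after which your verification of (1) (and the routine checks for (2)--(4)) closes cleanly.
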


It is easy to construct a \MB\ function for a submanifold $N$, especially if its
normal bundle is trivial as is the case for a stratum of a simplicial stratified set.
Take a \nbhd\ $U$ of $N$ diffeomorphic to $N\times \R^k$ and take norm squared of the $\R^k$ factor.

\section{Constructing a Cellular Tico}

Suppose $Z$ is a simplicial stratified subset of a smooth manifold $X$
and $\rho_S$, $\pi_S$, and $\nu_S$ are tubular data statisfying the conclusions of Theorem \ref{piconstruct0}.
We will construct a cellular tico in $X$ with one sheet $A_S$ for each stratum $S$ of $Z$.
This sheet $A_S$ will be the boundary of a codimension 0 submanifold $B_S$ of $X$ diffeomorphic to a disc.
The construction will depend on some parameters $\tau_S>0$ and $\tau_{S,i}$ for $\dim S \le i \le \dim Z$.
They must satisfy
\begin{itemize}
	\item $0 < \tau_{S,\dim S} < \tau_{S,\dim S+1} <\cdots <\tau_{S,\dim Z}<\tau_S$ and
	\item if $K_S = S-\bigcup_{T\prec S}\rho_T^{-1}((0,\tau_{T,\dim T}))$, $x\in K_S$, $y\in \R^{\dim X-\dim S}$, and $|y|^2\le \tau_S$
	then $(x,y)\in \nu_S(U_S)$.
	In particular, $\pi_S^{-1}(K_S) \cap \rho_S^{-1}([0,\tau_S])$ is compact
	and diffeomorphic to $K_S\times$ a disc.
\end{itemize}
We may as well standardize
$\tau_{S,i}$, say $\tau_{S,i} = \tau_S(1+i)/(2+\dim Z)$.

\begin{lemma}
	There are choices of $\tau_S$ satisfying the required conditions.
\end{lemma}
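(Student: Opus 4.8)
The plan is to choose the numbers $\tau_S$ one stratum at a time, by induction on $\dim S$, handling lower-dimensional strata first. Observe first that, with the standardized values $\tau_{S,i}=\tau_S(1+i)/(2+\dim Z)$, the chain condition $0<\tau_{S,\dim S}<\cdots<\tau_{S,\dim Z}<\tau_S$ holds automatically for \emph{any} $\tau_S>0$, since $i\mapsto(1+i)/(2+\dim Z)$ is strictly increasing and bounded above by $(1+\dim Z)/(2+\dim Z)<1$. Thus only the second bullet is at issue, and the only quantities in it that come from strata other than $S$ are the $\tau_{T,\dim T}$---equivalently the values $\tau_T$---for $T\prec S$. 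Since $T\prec S$ forces $\dim T<\dim S$, this makes the induction well founded: by the time we must pick $\tau_S$, every $\tau_T$ with $T\prec S$ is already fixed, so $K_S=S-\bigcup_{T\prec S}\rho_T^{-1}((0,\tau_{T,\dim T}))$ is a well-defined subset of $S$.

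The crux is that $K_S$ is compact. First, $K_S$ is closed in $S$: it is $S$ with finitely many subsets open in $S$ removed, since there are only finitely many strata $T\prec S$ (one for each proper open face of the simplex parameterizing $\Cl S$) and each $\rho_T^{-1}((0,\tau_{T,\dim T}))$ is open in $X$. Second, every point $x$ of $\Cl S-S$ has a \nbhd\ in $\Cl S$ disjoint from $K_S$: such an $x$ lies in some stratum $T\prec S$, so $x\in U_T$ and $\rho_T(x)=0$; by Theorem \ref{piconstruct0}(6) $\rho_T$ equals $|y|^2$ in the coordinates $\nu_T$, with $T$ the zero section, so $\rho_T$ is strictly positive on $U_T-T$ and hence on $S\cap U_T$, while $\rho_T<\tau_{T,\dim T}$ throughout a \nbhd\ of $x$ by continuity; so a small enough \nbhd\ $W$ of $x$ in $\Cl S$ has $W\cap S\subset\rho_T^{-1}((0,\tau_{T,\dim T}))$ and therefore $W\cap K_S=\emptyset$. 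Consequently the closure of $K_S$ in $\Cl S$ is contained in $S$, and being already closed in $S$ it is closed in $\Cl S$. As $\Cl S$ is diffeomorphic to $\Delta^{\dim S}$, it is compact, and hence so is $K_S$.

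To conclude, recall from Theorem \ref{piconstruct0}(7) that $\nu_S(U_S)=V_S=\{(x,y)\in S\times\R^{\dim X-\dim S}\mid\gamma_S(x)>|y|^2\}$, so the second bullet---$x\in K_S$ and $|y|^2\le\tau_S$ imply $(x,y)\in\nu_S(U_S)$---holds as soon as $\tau_S<\gamma_S(x)$ for all $x\in K_S$. If $K_S=\emptyset$, set $\tau_S=1$; otherwise, since $K_S$ is compact and $\gamma_S$ is continuous and positive, $c_S:=\min_{x\in K_S}\gamma_S(x)>0$, and any $\tau_S\in(0,c_S)$ works (shrinking $\tau_S$ a little more if one also wants the ``in particular'' clause, which is then immediate: via $\nu_S$ and parts (5)--(6) of Theorem \ref{piconstruct0}, the set $\pi_S^{-1}(K_S)\cap\rho_S^{-1}([0,\tau_S])$ is identified with the product of $K_S$ with the closed disc $\{y\mid|y|^2\le\tau_S\}$). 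This completes the inductive choice. The only real point is the compactness of $K_S$---in essence, that $K_S$ stays away from the lower faces $T\prec S$ of $\Cl S$, which is exactly what excising the tubes $\rho_T^{-1}((0,\tau_{T,\dim T}))$ is designed to do; everything else is bookkeeping plus the fact that a closed subset of a compact space is compact.
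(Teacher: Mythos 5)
Your proposal is correct and takes essentially the same route as the paper's proof: induct on $\dim S$, observe that $K_S$ is compact because it is a closed subset of $\Cl S\cong\Delta^{\dim S}$, and then take $\tau_S<\min_{x\in K_S}\gamma_S(x)$. The paper establishes the compactness a touch more slickly by rewriting $K_S=\Cl S-\bigcup_{T\prec S}\rho_T^{-1}((-\infty,\tau_{T,\dim T}))$, which is manifestly closed in $\Cl S$; your two-step argument (closed in $S$, plus the closure in $\Cl S$ stays inside $S$) reaches the same conclusion and is equally valid.
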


\begin{proof}
	We may suppose by induction that we have chosen $\tau_S$ for all $S$ with dimension $<k$
	satisfying the above conditions. Now pick a stratum $S$ of dimension $k$.
	We can define $K_S$ since we have already chosen $\tau_T$ for $T\prec S$.
	Note that $K_S$ is compact since it is the closed subset
	$K_S =  \Cl S-\bigcup_{T\prec S}\rho_T^{-1}((-\infty,\tau_{T,\dim T}))$
	of $\Cl S$.
	Now choose $\tau_S < \min_{x\in K_S}\gamma_S(x)$.
\end{proof}

We start with a piecewise smooth approximation $J_S$ of $B_S\cap S$, 
$$J_S = S - \bigcup_{T\prec S} \rho_T^{-1}((0,\tau_{T,\dim S})).$$

Lemma  \ref{simplex_param} below shows that we may find a parameterization $h\colon \Delta^k\to \Cl S$ of $S$ so that $h^{-1}(J_S)$
is the intersection of linear half spaces.
It probably  doesn't matter too much how we smooth $J_S$, but to be precise we will use Lemma \ref{convex_smooth}
to smooth $J_S$, obtaining a subset $E_S$ of $J_S$ with smooth boundary so that $h^{-1}(E_S)$ is convex.
We will take the smoothing $E_S$ close enough to $J_S$ that 
$$\partial E_S \subset \bigcup_{T\prec S} \rho_T^{-1}((0,\tau_{T,\dim S+1})).$$
By Lemmas \ref{convex_smooth} and \ref{simplex_param} we may also suppose that for every collection of faces $T_1\prec\cdots\prec T_m\prec S$,
$h^{-1}(E_S\cap \bigcap_{i=1}^m \rho_{T_i}^{-1}(\tau_{T_i})) $ is convex with smooth boundary.

We now let $B_S$ be a smoothing of $\pi_s^{-1}(E_S)\cap \rho_S^{-1}([0,\tau_S])$.
Note that $\pi_s^{-1}(E_S)\cap \rho_S^{-1}([0,\tau_S])$ is diffeomorphic via $\nu_S$ to  a product of smooth discs
$E_S\times D^{n-\dim S}$ so we only need smooth the corner $\pi_S^{-1}(\partial E_S)\cap \rho_S^{-1}(\tau_S)$ in some standard way.
We will smooth it by taking a continuous  $\beta_S\colon  E_S\to [0,\tau_S]$ so that $\beta_S(x)=\tau_S$ if $x\not\in 
 \bigcup_{T\prec S} \rho_T^{-1}((0,\tau_{T,\dim S+1}))$, $\beta|_{E_S-\partial E_S}$ is smooth, 
$\beta(x)\ge \tau_{S,i}$ for all $i$, and $\{(x,t)\in E_S\times \R\mid t\le \beta_S(x)\}$  is a smooth submanifold of $S\times \R$.
We may then let $$B_S
= \nu_S^{-1}(\{(x,y)\in E_S\times \R^{n-\dim S}\mid \beta_S(x) \ge |y|^2 \}).$$

Figure 2 shows $B_T$ for a 2 dimensional $T$ and all its faces in a 2 dimensional manifold. 
Figure 3 shows $h^{-1}(B_T)$ for the good parameterization $h$ which linearizes the $\rho_T$.

\begin{figure}
	\centering
	\includegraphics[scale=0.4]{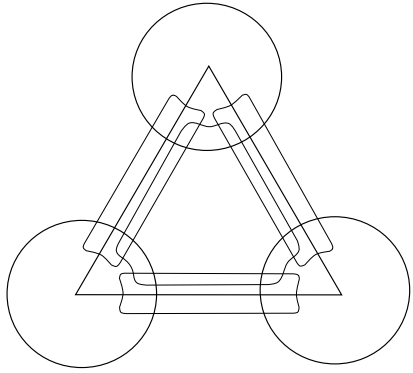}
	\caption{$B_T$ for $\Delta^2$ and its faces in a 2 manifold}
\end{figure}

\begin{figure}
	\centering
	\includegraphics[scale=0.4]{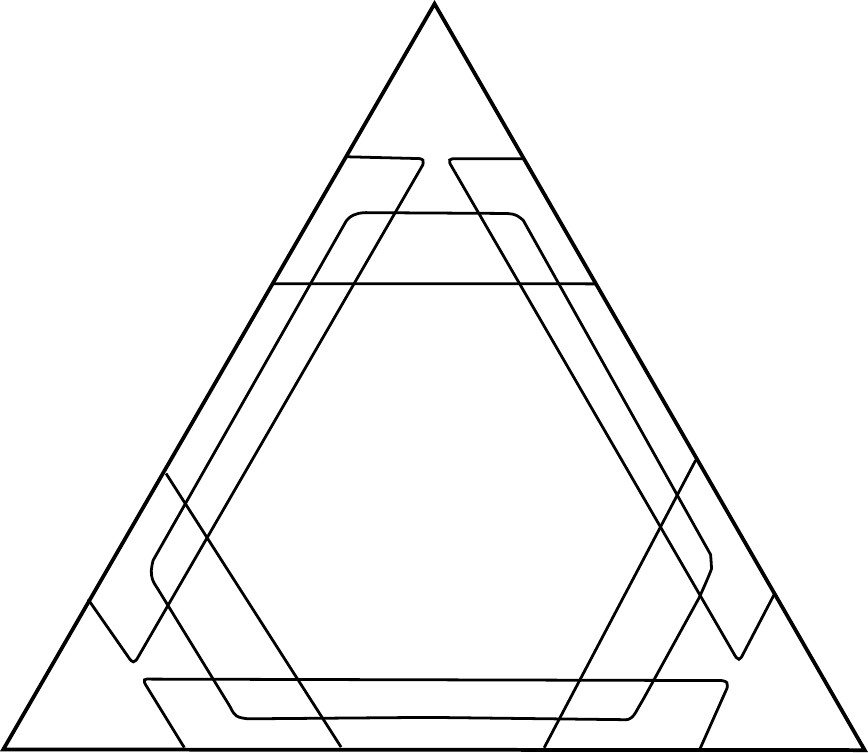}
	\caption{$h^{-1}(B_T)$ for $\Delta^2$ and its faces}
\end{figure}

\begin{lemma}
	The above tico satisfies the hypotheses of Theorem \ref{cellular_tico} and hence the tico is cellular.
\end{lemma}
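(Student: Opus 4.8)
The plan is to verify the two hypotheses of Theorem~\ref{cellular_tico} for the tico $\A=\{A_S\}$ and the bodies $B_S$ built above; cellularity is then its conclusion. The first hypothesis is immediate: $B_S=\nu_S^{-1}\bigl(\{(x,y)\in E_S\times\R^{\,n-\dim S}\mid\beta_S(x)\ge|y|^2\}\bigr)$ is the corner-rounding of the product $E_S\times D^{\,n-\dim S}$, and since $h^{-1}(J_S)$ is a compact full-dimensional convex polytope in $\Delta^{\dim S}$ with smoothing $h^{-1}(E_S)$ (Lemmas~\ref{simplex_param},~\ref{convex_smooth}), $E_S$ is a disc of dimension $\dim S$, so $B_S$ is a smoothly embedded $n$-disc in $\Int X$ with $\partial B_S=A_S$ and $B_S-A_S=\Int B_S\cong\R^n$. (That $\A$ is genuinely a tico --- each $A_S\cong S^{n-1}$, all intersections $\bigcap_SA_S$ transverse --- is likewise part of the construction, via clause~(4) of Theorem~\ref{piconstruct0} and the convexity with smooth boundary arranged for the pieces $h^{-1}\!\bigl(E_S\cap\bigcap_i\rho_{T_i}^{-1}(\tau_{T_i})\bigr)$.)

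The substance is the second hypothesis: whenever $\bigcap_{S\in C}B_S\ne\emptyset$ the intersection $\bigcap_{S\in C}A_S$ is a nonempty sphere. Since $B_S\subset U_S$, nonemptiness forces $\bigcap_{S\in C}U_S\ne\emptyset$, so by clause~(3) of Theorem~\ref{piconstruct0} the strata of $C$ are pairwise $\prec$-comparable, i.e.\ form a flag $S_1\prec\cdots\prec S_m$, which under a parameterization $h\colon\Delta^k\to\Cl S_m$ ($k=\dim S_m$) corresponds to a flag of faces $F_1\subsetneq\cdots\subsetneq F_{m-1}\subsetneq\Delta^k$. I would show by induction on $m$ that $\bigcap_{i=1}^mB_{S_i}\ne\emptyset$ implies $\Sigma_m:=\bigcap_{i=1}^mA_{S_i}\cong S^{\,n-m}$; the base $m=1$ is $A_{S_1}\cong S^{n-1}$, and note $\bigcap_{i=1}^{m-1}B_{S_i}\supseteq\bigcap_{i=1}^mB_{S_i}\ne\emptyset$ so the inductive hypothesis applies to the shorter flag.

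For the inductive step, clauses~(1) and~(2) of Theorem~\ref{piconstruct0} say that for $i<m$ the maps $\pi_{S_i}$ and $\rho_{S_i}$ are constant on $\pi_{S_m}$-fibres over $U_{S_i}\cap U_{S_m}$, whence $B_{S_i}\cap U_{S_m}=\pi_{S_m}^{-1}(B_{S_i}\cap S_m)$ and $A_{S_i}\cap U_{S_m}=\pi_{S_m}^{-1}(A_{S_i}\cap S_m)$; as $A_{S_m}\subset B_{S_m}\subset U_{S_m}$ this yields
$$\Sigma_{m-1}\cap B_{S_m}=\nu_{S_m}^{-1}\bigl(\{(x,y)\mid x\in\Gamma\cap E_{S_m},\ |y|^2\le\beta_{S_m}(x)\}\bigr),\qquad \Gamma:=\bigcap_{i=1}^{m-1}\bigl(A_{S_i}\cap S_m\bigr)\subset S_m,$$
a disc bundle with fibre $\cong D^{\,n-k}$ over $\Gamma\cap E_{S_m}$. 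The crux is that $\Gamma\cap E_{S_m}$ is a nonempty disc of dimension $k-m+1$. Granting this, $\Sigma_{m-1}\cap B_{S_m}$ is an $(n{-}m{+}1)$-disc, and since $A_{S_m}=\partial B_{S_m}$ is transverse to the sphere $\Sigma_{m-1}\cong S^{\,n-m+1}$ (general position of $\A$), $\Sigma_{m-1}\cap B_{S_m}$ is a smoothly embedded disc in $\Sigma_{m-1}$ with boundary $\Sigma_{m-1}\cap A_{S_m}=\Sigma_m$; hence $\Sigma_m\cong S^{\,n-m}$.

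The main obstacle is the claim about $\Gamma\cap E_{S_m}$. Pulling back by $h$, this set is an intersection --- inside the rounded-convex $h^{-1}(E_{S_m})\subset\Delta^k$ --- of the boundary hypersurfaces $h^{-1}(A_{S_i}\cap S_m)$ of the rounded-convex neighborhoods $h^{-1}(B_{S_i}\cap S_m)$ of the faces $F_i$, so it is in principle an elementary problem of convex geometry in a simplex; getting it to come out as a nonempty $(k{-}m{+}1)$-disc is precisely what the careful choices in the construction are for. One uses that $h$ linearizes the $\rho_{T_i}$ on $\Cl S_m$, that the pulled-back level slices were smoothed to convex sets with smooth boundary, and that the parameters $\tau_S$ are chosen (beyond the stated ordering $\tau_{S,\dim S}<\cdots<\tau_{S,\dim Z}<\tau_S$) so that bodies of strata lying "too far apart'' in a flag are disjoint --- so that $\bigcap_{i=1}^mB_{S_i}\ne\emptyset$ already forces $m\le n$ and a controlled flag --- and so that a body $B_{S_m}$ meets each neighbor $B_{S_i}$ only where $\beta_{S_m}$ is constant, so that no corner-rounding interferes with the intersection. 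With these arrangements the claim, hence the whole lemma, follows by a fiddly but routine induction tracking the nested convex sets and their roundings.
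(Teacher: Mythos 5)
Your overall decomposition agrees with the paper's in substance: pass to the flag $S_1\prec\cdots\prec S_m$, push everything into $S_m$ via $\pi_{S_m}$ (justified exactly by clauses (1) and (2) of Theorem~\ref{piconstruct0}), and recognize that the whole problem reduces to showing that the set you call $\Gamma\cap E_{S_m}$ is a disc, after which the normal disc factor gives the result. You correctly flag this as the crux. But that is precisely the point at which you stop and say the result ``follows by a fiddly but routine induction tracking the nested convex sets and their roundings,'' and that is the part the paper actually has to prove. There is a genuine missing argument here.

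The concrete fact needed, and what the paper proves, is that for $i<m$ the corner-rounding of $B_{S_i}$ does not meet $B_{S_m}$, i.e.\
$B_{S_m}\cap A_{S_i}=B_{S_m}\cap\rho_{S_i}^{-1}(\tau_{S_i})$. This is what replaces the curved hypersurface $A_{S_i}\cap S_m$ inside $E_{S_m}$ by the simple level set $\rho_{S_i}^{-1}(\tau_{S_i})$, which $h$ linearizes (Lemma~\ref{simplex_param}), so that $h^{-1}\bigl(E_{S_m}\cap\bigcap_{i<m}\rho_{S_i}^{-1}(\tau_{S_i})\bigr)$ is convex with smooth boundary by the choices made in the construction via Lemma~\ref{convex_smooth} --- hence a disc. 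Proving this identity is not routine; it is the whole reason for the hierarchy of parameters $\tau_{T,\dim T}<\tau_{T,\dim T+1}<\cdots<\tau_{T,\dim Z}<\tau_T$. The paper's argument is a short but essential chain: if $x\in B_{S_m}\cap A_{S_i}$ had $\rho_{S_i}(x)<\tau_{S_i}$, then the rounding of $B_{S_i}$ would be active, so $\pi_{S_i}(x)$ would lie near $\partial E_{S_i}$, i.e.\ there would be some $T\prec S_i$ with $\tau_{T,\dim S_i}\le\rho_T\pi_{S_i}(x)<\tau_{T,\dim S_i+1}$; but $\pi_{S_m}(x)\in E_{S_m}$ forces $\rho_T\pi_{S_m}(x)\ge\tau_{T,\dim S_m}\ge\tau_{T,\dim S_i+1}$, and all three quantities $\rho_T\pi_{S_i}(x)$, $\rho_T(x)$, $\rho_T\pi_{S_m}(x)$ coincide by compatibility of the tubular data --- a contradiction. (And a parallel argument for the converse inclusion.) Your proposal gestures at ``$B_{S_m}$ meets each neighbor $B_{S_i}$ only where $\beta_{S_m}$ is constant,'' but that has the indices backwards: $\beta_{S_m}$ is allowed to vary (the final object is a disc crossed with a disc with rounded corners, which is still a disc); it is $\beta_{S_i}$, the rounding of the \emph{lower} body, that must not interfere, and one has to verify this.

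Two smaller points. First, the $B_{S_i}\cap S_m$ are not convex in $h$-coordinates as you suggest --- only their intersections with $E_{S_m}$ become (pieces of) convex sets, and only after the identity above. Second, the outer induction on $m$ is unnecessary: once $\Sigma_{m-1}\cap B_{S_m}$ is known to be a disc, its boundary is automatically a sphere, with no need to have established first that $\Sigma_{m-1}$ is a sphere; the paper shows the disc property for the full flag directly.
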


\begin{proof}
	Clearly $B_S-A_S$ is  diffeomorphic to $\R^n$, since $E_S = B_S\cap S$ is a convex smooth manifold,
	hence a disc, and $B_S$ is  the product of $E_S$ with another disc, with corners rounded.
	Now suppose we have  $B_{T_1},\ldots,B_{T_k}$ with nonempty intersection.
	After reordering, we can have $T_1\prec T_2\prec \cdots\prec T_k$.
	Set $S=T_k$.
	We will show that $B_S\cap \bigcap_{i=1}^{k-1} A_{T_i}$ is a disc, and hence its boundary
	$A_S\cap \bigcap_{i=1}^{k-1} A_{T_i}$ is a sphere, at which point we are done.
	
	Suppose $x\in B_S\cap A_{T_i}$ for some $i<k$.
	We know $\pi_{T_i}\pi_S(x) = \pi_{T_i}(x) \in E_{T_i}$.
	Also $\rho_{T_i}\pi_S(x) = \rho_{T_i}(x) \le \tau_{T_i}$.
	Suppose $\rho_{T_i}\pi_S(x) < \tau_{T_i}$.  
	Then $\pi_{T_i}(x)$ is near the boundary of $E_{T_i}$, in particular there is a $T\prec T_i$
	so that $\tau_{T,\dim T_i}\le \rho_T\pi_{T_i}(x) < \tau_{T,\dim T_i+1}$.
	But  $\pi_S(x)\in E_S$ so 
	$$\rho_T\pi_S(x)\ge \tau_{T,\dim S} \ge  \tau_{T,\dim T_i+1} > \rho_T\pi_{T_i}(x) = \rho_T(x) = \rho_T\pi_S(x),$$
	a contradiction.
	Consequently $\rho_{T_i}(x) = \rho_{T_i}\pi_S(x) = \tau_{T_i}$.
	So $B_S\cap A_{T_i}\subset B_S\cap \rho_{T_i}^{-1}(\tau_{T_i})$.
	
	Conversely, suppose $x\in B_S\cap \rho_{T_i}^{-1}(\tau_{T_i})$.
	For any $T\prec T_i$ we know either $\pi_S(x)\not\in U_T$ or $\rho_T\pi_S(x)\ge \tau_{T,\dim S}\ge \tau_{T,\dim T_i+1}$.
	Consequently, $\pi_{T_i}(x)$ is far enough from the boundary of $E_{T_i}$ that $x\in A_{T_i}$.
	
	So $B_S\cap A_{T_i}= B_S\cap \rho_{T_i}^{-1}(\tau_{T_i})$ and consequently,
	$$B_S\cap \bigcap_{i=1}^{k-1} A_{T_i} = B_S \cap \bigcap _{i=1}^{k-1} \rho_{T_i}^{-1}(\tau_{T_i}).$$
	Note $\pi_S(B_S \cap \bigcap _{i=1}^{k-1} \rho_{T_i}^{-1}(\tau_{T_i}))
	= E_S \cap \bigcap _{i=1}^{k-1} \rho_{T_i}^{-1}(\tau_{T_i})$ and so
	$$\nu_S(B_S \cap \bigcap _{i=1}^{k-1} \rho_{T_i}^{-1}(\tau_{T_i}))
	= \{(x,y)\in (E_S\cap \bigcap _{i=1}^{k-1} \rho_{T_i}^{-1}(\tau_{T_i}))\times \R^{n-\dim S}
	\mid \beta_S(x) \ge |y|^2 \}$$
	which is just $E_S\cap \bigcap _{i=1}^{k-1} \rho_{T_i}^{-1}(\tau_{T_i})$ cross a disc with corners rounded.
	But we know $E_S\cap \bigcap _{i=1}^{k-1} \rho_{T_i}^{-1}(\tau_{T_i})$ is a disc since 
	$h^{-1}(E_S\cap \bigcap _{i=1}^{k-1} \rho_{T_i}^{-1}(\tau_{T_i}))$ is convex with smooth boundary.	
\end{proof}

We'll eventually need the following, so let's prove it now

\begin{lemma}\label{BS_complement}
	In the above construction, $$B_S-\bigcup_{T\prec S} B_T = \rho_S^{-1}([0,\tau_S])\cap
	\pi_S^{-1}(E_S-\bigcup_{T\prec S}\rho_T^{-1}([0,\tau_T])).$$
\end{lemma}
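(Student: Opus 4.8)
The plan is to unwind the definitions of $B_S$ and $B_T$ and compare them coordinate by coordinate using the tubular data. Recall from the construction that
$$B_S = \nu_S^{-1}(\{(x,y)\in E_S\times\R^{n-\dim S}\mid \beta_S(x)\ge |y|^2\}),$$
so a point $z\in B_S$ is determined by its ``coordinates'' $\pi_S(z)\in E_S$ and $\rho_S(z)=|y|^2\le\beta_S(\pi_S(z))\le\tau_S$. First I would prove the inclusion $\supseteq$: if $z\in\rho_S^{-1}([0,\tau_S])\cap\pi_S^{-1}(E_S-\bigcup_{T\prec S}\rho_T^{-1}([0,\tau_T]))$, then $z\in B_S$ essentially by the previous lemma's analysis (since $\pi_S(z)\in E_S$ lies far from $\partial E_S$, we have $\beta_S(\pi_S(z))=\tau_S\ge\rho_S(z)$), and $z\notin B_T$ for any $T\prec S$ because $z\in B_T$ would force $\rho_T(z)=\rho_T\pi_S(z)\le\tau_T$ by the compatibility $\rho_T\pi_S=\rho_T$ on $U_S\cap U_T$ (conclusion 2 of Theorem \ref{piconstruct0}), contradicting $\pi_S(z)\notin\rho_T^{-1}([0,\tau_T])$.

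For the reverse inclusion $\subseteq$, take $z\in B_S$ with $z\notin B_T$ for all $T\prec S$. Automatically $\rho_S(z)\le\tau_S$ and $\pi_S(z)\in E_S$. It remains to show $\pi_S(z)\notin\rho_T^{-1}([0,\tau_T])$ for every $T\prec S$. Suppose for contradiction that $\rho_T\pi_S(z)\le\tau_T$ for some $T\prec S$. I want to conclude $z\in B_T$, contradicting the hypothesis. The key computation, exactly as in the proof of the cellularity lemma, is that $\rho_T(z)=\rho_T\pi_S(z)$ (again by conclusion 2), and that $\pi_T(z)=\pi_T\pi_S(z)\in E_T$ because $\pi_S(z)\in E_S$ together with the nesting of the $\tau$-parameters keeps $\pi_T\pi_S(z)$ away from $\partial E_T$ (the same inequality $\rho_{T'}\pi_S(z)\ge\tau_{T',\dim S}\ge\tau_{T',\dim T+1}$ used there shows $\pi_T\pi_S(z)\notin\partial E_T$). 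Then $\rho_T(z)\le\tau_T\le\beta_T(\pi_T(z))$ (since $\beta_T$ takes the value $\tau_T$ away from $\partial E_T$ and $\beta_T\ge\tau_{T,i}$ in general, but here $\pi_T(z)$ is far from $\partial E_T$ so $\beta_T(\pi_T(z))=\tau_T$), which is precisely the condition $z\in\nu_T^{-1}(\{(x,y)\mid \beta_T(x)\ge|y|^2\})=B_T$, a contradiction.

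The main obstacle I expect is bookkeeping around the boundary region of each $E_T$: the function $\beta_T$ interpolates between $\tau_{T,\dim T+1}$ near $\partial E_T$ and $\tau_T$ away from it, so one must be careful that ``$\pi_S(z)\in E_S$ and $\rho_T\pi_S(z)\le\tau_T$'' really does place $\pi_T\pi_S(z)$ in the region where $\beta_T\equiv\tau_T$, not merely in $E_T$. This is handled by the chain of inequalities among the standardized $\tau_{T',i}=\tau_{T'}(1+i)/(2+\dim Z)$ already exploited in the cellularity proof, using $\dim S>\dim T$; I would simply cite that argument rather than redo it. A secondary point is to make sure the identity $\rho_T\pi_S=\rho_T$ and $\pi_T\pi_S=\pi_T$ apply, which requires $U_S\cap U_T\ne\emptyset$ — but this holds whenever $\pi_S(z)\in U_T$, which is exactly the situation $\rho_T\pi_S(z)\le\tau_T$ puts us in (for $\tau_T$ small relative to the size of $U_T$), and otherwise $\pi_S(z)\notin U_T\supset\rho_T^{-1}([0,\tau_T])$ already gives what we want. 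Everything else is a routine substitution into the explicit $\nu_S$-coordinate description of the $B$'s.
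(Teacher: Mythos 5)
Your argument is correct, and the forward inclusion matches the paper's essentially verbatim. The reverse inclusion, however, takes a genuinely different route. Both proofs reduce to showing that if $\pi_S(z)\in\rho_T^{-1}([0,\tau_T])$ for some $T\prec S$ then $z\in B_T$, and both need $\pi_T(z)$ to land in the region where $\beta_T\equiv\tau_T$. The paper gets this by first choosing $T$ of \emph{minimal dimension} among those with $\pi_S(z)\in\rho_T^{-1}([0,\tau_T])$; then for any $Q\prec T$, the compatibility $\rho_Q\pi_T=\rho_Q=\rho_Q\pi_S$ shows that $\pi_T(z)\in\rho_Q^{-1}([0,\tau_Q])$ would contradict minimality, so in fact $\pi_T(z)\notin\rho_Q^{-1}([0,\tau_Q])$ for every $Q\prec T$, which is more than enough. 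You skip the minimality step and instead invoke the $\tau$-nesting directly: for any $T'\prec T$ with $\pi_T(z)\in U_{T'}$, the chain $\rho_{T'}\pi_T(z)=\rho_{T'}(z)=\rho_{T'}\pi_S(z)\ge\tau_{T',\dim S}\ge\tau_{T',\dim T+1}$ (using $\pi_S(z)\in E_S\subset J_S$ and $\dim T+1\le\dim S$) places $\pi_T(z)$ outside $\bigcup_{T'\prec T}\rho_{T'}^{-1}((0,\tau_{T',\dim T+1}))$. Both conclude $z\in B_T$, a contradiction. Your route is the same computation used in the cellularity lemma's proof and so is arguably more self-contained; the paper's minimality trick is slightly slicker because it never touches the intermediate $\tau_{T',i}$ values. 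Two small cleanups: (a) what you need is that $\pi_T(z)$ lies where $\beta_T\equiv\tau_T$, which is strictly stronger than $\pi_T(z)\notin\partial E_T$ as you first wrote (you do flag this later, but the earlier statement should be corrected); and (b) the domain bookkeeping is not ``$\tau_T$ small relative to the size of $U_T$'' --- rather $\rho_T\pi_S(z)\le\tau_T$ already presupposes $\pi_S(z)\in U_T$, and then conclusion 1 of Theorem \ref{piconstruct0} gives $z\in U_S\cap U_T$, and a second application gives $\pi_S(z)\in U_{T'}$ from $\pi_T(z)\in U_{T'}$.
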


\begin{proof}
	Suppose first that $x\in \rho_S^{-1}([0,\tau_S])\cap
	\pi_S^{-1}(E_S-\bigcup_{T\prec S}\rho_T^{-1}([0,\tau_T]))$.
	Note that $\beta_S\pi_S(x)=\tau_S$ so $x\in B_S$.
	Suppose that $x\in B_T$ for some $T\prec S$.
	We know $\rho_T(x) \le \tau_T$ and $\pi_S(x)\in U_T$ so $\rho_T\pi_S(x) = \rho_T(x) \le \tau_T$,
	a contradiction.
	So we have shown $x\in B_S-\bigcup_{T\prec S} B_T$.
	
	Conversely, suppose $y\in B_S-\bigcup_{T\prec S} B_T$.
	Since $y\in B_S$ we know $\rho_S(y)\le \tau_S$ and $\pi_S(y)\in E_S$.
	Suppose that $\pi_S(y)\in \rho_T^{-1}([0,\tau_T])$ for some $T\prec S$.
	We may as well suppose $T$ has as small dimension as possible, so
	$\pi_S(y)\not\in \rho_Q^{-1}([0,\tau_Q])$ for all $Q\prec T$.
	We know that $y\in U_T$, so $\rho_T(y) = \rho_T\pi_S(y) \le \tau_T$.
	Take any $Q\prec T$.
	If $\pi_T(y)\not\in U_Q$ then of course $\pi_T(y)\not \in \rho_Q^{-1}([0,\tau_Q])$.
	If  $\pi_T(y)\in U_Q$ and $\rho_Q\pi_T(y)\le \tau_Q$, then $\pi_S(y)\in U_Q$ and thus 
	$\rho_Q\pi_S(y) = \rho_Q\pi_T\pi_S(y) = \rho_Q\pi_T(y)\le \tau_Q$, a contradiction.
	So for all $Q\prec T$ we have $\pi_T(y)\not \in \rho_Q^{-1}([0,\tau_Q])$.
	In particular, $\pi_T(y)\in E_T$ and  $\beta_T(\pi_T(y))= \tau_T$.
	Thus $y\in B_T$, a contradiction.
	So $\pi_S(y)\not\in \bigcup_{T\prec S}\rho_T^{-1}([0,\tau_T])$ and we are done.
\end{proof}

The above construction relies on a well behaved parameterization of the stratum $S$.
	Suppose $h\colon \Delta^k\to X$ is a parameterization of a stratum $S$ of a simplicial stratified set $Z$ in $X$.
	If $T\prec S$, we let $A_{Th}\subset \{0,\ldots,k \}$ denote the vertices of $h^{-1}(T)$, 
	so $h^{-1}(\Cl T) = \mu_{A_{Th}}^{-1}(0)$.

\begin{lemma}\label{simplex_param}
	Suppose $S$ is a stratum of a simplicial stratified set $Z$ in $X$.
	Recall
	$$K_S = S - \bigcup_{T\prec S}\rho_T^{-1}([0,\tau_{T,\dim T})).$$
	Then there is a parameterization $h\colon \Delta^k\to X$ of $S$ so that 
	if $T\prec S$
	and $t\in [\tau_{T,\dim T},\tau_T]$ there is an $s$ so that $h^{-1}(K_S\cap\rho_T^{-1}(t)) \subset \mu_{A_{Th}}^{-1}(s)$.
	In fact, there are $\delta_T>0$ so that if $\Delta' = \Delta^k - \bigcup_{T\prec S}\mu_{A_{Th}}^{-1}([0,\delta_T\tau_{T,\dim T}))$
	then $h^{-1}(K_S) = \Delta'$ and for
	any $t\in [\tau_{T,\dim T},\tau_T]$ we have $h^{-1}(K_S\cap\rho_T^{-1}(t)) = \Delta'\cap \mu_{A_{Th}}^{-1}(t\delta_T)$.
\end{lemma}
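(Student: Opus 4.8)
\emph{Proposal.} The plan is to pick the parameterization $h$ so that it carries each linear function $\mu_{A_{Th}}$ to a constant multiple $\delta_T\rho_T$ on the part of $\Delta^k$ that matters, i.e. to match the tubular data $(\rho_T,\pi_T)_{T\prec S}$ on $\Cl S$ with the ``linear tubular data'' on $\Delta^k$ in which the role of $\rho_T$ is played by $\mu_{A_{Th}}$ (with any affine retraction of $\Delta^k$ onto a face as the accompanying $\pi$). Precisely, it is enough to produce a parameterization $h\colon\Delta^k\to\Cl S$ and constants $\delta_T>0$ such that for every $T\prec S$ one has $\{h^*\rho_T<\tau_{T,\dim T}\}=\mu_{A_{Th}}^{-1}([0,\delta_T\tau_{T,\dim T}))$ and, on the annular region $\{\tau_{T,\dim T}\le h^*\rho_T\le\tau_T\}$ lying outside all the sets $\rho_{T'}^{-1}([0,\tau_{T',\dim T'}))$ with $T'\prec S$, $T'\ne T$, the identity $h^*\rho_T=\mu_{A_{Th}}/\delta_T$ holds. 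Granting this, $h^{-1}(K_S)=\Delta^k-\bigcup_{T\prec S}(h^*\rho_T)^{-1}([0,\tau_{T,\dim T}))$ is exactly $\Delta'$, and for $t\in[\tau_{T,\dim T},\tau_T]$ we get $h^{-1}(K_S\cap\rho_T^{-1}(t))=\Delta'\cap\{h^*\rho_T=t\}=\Delta'\cap\mu_{A_{Th}}^{-1}(t\delta_T)$; so everything reduces to the construction of such an $h$.

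I would build $h$ by modifying a given parameterization $h_0\colon\Delta^k\to\Cl S$ by a face-preserving diffeomorphism $\Psi$ of $\Delta^k$ which is the time-one map of a vector field tangent to all faces and supported near $\partial\Delta^k$ — so $\Psi$ is isotopic to the identity through parameterizations and $h=h_0\Psi$ is again a parameterization (inducing the same face-to-stratum correspondence as $h_0$), with no issue of extending a boundary diffeomorphism over a disc. The field is assembled face by face, in order of increasing dimension. At a vertex $v$ of $\Delta^k$ mapping to a stratum $T$, condition (4) of Theorem \ref{piconstruct0} with $\ell=1$ says $(\rho_T,\pi_T)\colon S\cap U_T\to(0,\infty)\times T$ is a submersion, so $h_0^*\rho_T$ has no critical points near $v$ and its level sets near $v$ can be reparameterized (by a flow in a gradient-like direction, tangent to the faces through $v$) so as to become the level sets of $\mu_{A_{Th}}$, with some scale $\delta_T$ fixed on the relevant annulus. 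Having arranged this on all faces of dimension $<j$, to handle a $j$-face $\sigma$ mapping to $T$ I would straighten $h^*\rho_T$ on a neighbourhood of $\sigma$ minus the collars of the proper faces of $\sigma$: there, using condition (4) for the chains $T_1\prec\cdots\prec T_\ell\prec T\prec S$, the functions $(\rho_{T_1},\dots,\rho_{T_\ell},\rho_T)$ are jointly submersive, so I can flow in the $\rho_T$-direction inside the common level sets of the $\rho_{T_i}$, leaving the already-straightened $\rho_{T_i}$ untouched; conditions (1) and (2) ($U_T\cap U_{T_i}=\pi_T^{-1}(\cdots)$, $\pi_{T_i}\pi_T=\pi_{T_i}$, $\rho_{T_i}\pi_T=\rho_{T_i}$) make this flow compatible also with the faces $T'\succ T$ and with the identity outside.

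These local straightenings are carried out one maximal chain of faces at a time, and condition (3) of Theorem \ref{piconstruct0} — any point lies only in the $U_T$ with the corresponding $T$ forming a chain — is precisely what makes the pieces agree where they overlap, so they patch to a single vector field and hence to $\Psi$ and $h$ as above. The constants $\delta_T$ are read off at the end; one checks, using the inequalities imposed on the $\tau$'s when they were chosen, that all the annuli and collars involved actually lie inside the charts $\nu_T(U_T)$ where $\rho_T$ is defined and behaves as $|y|^2$, which is what one needs for the level sets of $h^*\rho_T$ to be moved around freely. (If necessary one can replace every $\tau_T$ by a smaller value from the outset; this only enlarges the $K_S$.)

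The main obstacle is the \emph{simultaneous} straightening: making $h^*\rho_T$ linear on the annulus near $\sigma$ without destroying the linearity previously achieved for the smaller faces $T_i\prec T$ and without clashing with the larger faces $T'\succ T$. This is exactly what the chain-submersion condition (4) is for — along a chain the $\rho$'s are functionally independent, so one can move in the $\rho_T$-direction while fixing the $\rho_{T_i}$ — and what the nesting compatibilities (1)--(2) are for. A secondary, purely bookkeeping, difficulty is pinning down the thresholds and the $\delta_T$ so that the set-level statements hold on the nose (equalities $h^{-1}(K_S)=\Delta'$ and $h^{-1}(K_S\cap\rho_T^{-1}(t))=\Delta'\cap\mu_{A_{Th}}^{-1}(t\delta_T)$, not mere inclusions); the apparent tension between $\rho_T$ vanishing to second order along $\Cl T$ and $\mu_{A_{Th}}$ vanishing only to first order is harmless, since we only require the two to match on $K_S$, which stays a fixed distance away from $\Cl T$.
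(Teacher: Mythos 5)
Your proposal follows essentially the same route as the paper's proof: induct on the dimension of the faces, modify the parameterization by a face-preserving diffeomorphism coming from vector fields tangent to all faces, and use the chain-submersion condition (4) together with the compatibility conditions (1)--(3) of Theorem \ref{piconstruct0} to straighten $\rho_T$ on the relevant annulus without disturbing the already-straightened $\rho_{T_i}$ for $T_i\prec T$. Where you write ``reparameterize the level sets by a gradient-like flow'' and ``flow in the $\rho_T$-direction inside the common level sets of the $\rho_{T_i}$,'' the paper makes this precise by a Moser-type argument: it integrates a time-dependent vector field $w(u,x)=(1,\alpha(u,x)v(x))$ on $[0,1]\times\Delta^k$ chosen to kill $dg_T$ for the interpolant $g_T(u,x)=u\,\mu_{A_{Th_\ell}}(x)+(1-u)\,\delta_T\rho_Th_\ell\phi_s(x)$, preceded by a flow $\phi_s$ for large $s$ that first expands the $\mu$-collar until it swallows the $\rho_T$-collar (so the interpolant is defined where needed and $\delta_T$ can be read off as $\delta_T\tau_T\le\epsilon$); your shrink-the-$\tau$'s remark plays a similar normalizing role but does not by itself guarantee the collar alignment, so this is the one detail you would need to supply.
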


\begin{proof}
	Let $S'_\ell = S - \bigcup_{T\prec S, \dim T <\ell}\rho_T^{-1}([0,\tau_{T,\dim T}))$.
	By induction on $\ell$ we'll suppose we have a parameterization $h_\ell\colon \Delta^k\to X$ of $S$ 
	and $\delta_T>0$ for all $T\prec S$ with $\dim T<\ell$
	so that
	\begin{itemize}
		\item $h_\ell^{-1}(S'_\ell) = \Delta^k - \bigcup_{T\prec S, \dim T <\ell}\mu_{A_{Th_\ell}}^{-1}([0,\delta_T\tau_{T,\dim T}))$, and
		\item for any $T\prec S$
		with $\dim T <\ell$
		and any $t\in [\tau_{T,\dim T},\tau_T]$ we have $h_\ell^{-1}(S_\ell'\cap\rho_T^{-1}(t)) = h_\ell^{-1}(S'_\ell) \cap\mu_{A_{Th_\ell}}^{-1}(t\delta_T)$.
		In particular, if $h_\ell(x)\in S'\cap U_T$ and $\rho_Th_\ell(x)\le \tau_T$, then $\rho_Th_\ell(x) = \mu_{A_{Th_\ell}}(x)/\delta_T$.
	\end{itemize} 
	We may start out the induction with $h_0$ any parameterization of  $S$.
	
	I claim there is a smooth vector field $v$ on $\Delta^k$ and an $\epsilon>0$ so that:
	\begin{enumerate}
		\item $v$ is tangent to all faces of $\Delta^k$,
		\item if $h_\ell(x)\in S'_\ell$ and $\mu_{A_{Th_\ell}}(x) \le \delta_T\tau_T$ for some $T\prec S$ with $\dim T<\ell$
		then $d\mu_{A_{Th_\ell}}(v(x))=0$, i.e., $v$ is tangent to fibers $h_\ell^{-1}(S'_\ell \cap \rho_T^{-1}(t))$
		for $\tau_{T,\dim T}\le t \le \tau_T$,
		\item if $h_\ell(x)\in S'_\ell\cap U_T$ for some $T\prec S$ with $\dim T=\ell$
		then $d\rho_T h_\ell(v(x))\ge 0$ and if $d\rho_T h_\ell(v(x))= 0$ then  $v(x)=0$ and 
		$\rho_T h_\ell(x)\not \in (0,\tau_T]$, 
		\item if $h_\ell(x)\in S'_\ell$ and $\mu_{A_{Th_\ell}}(x) \in (0,\epsilon]$ for some $T\prec S$ with $\dim T=\ell$
		then $d\mu_{A_{Th_\ell}}(v(x))>0$, 
		\item if $h_\ell(x)\in S'_\ell$ and $\mu_{A_{Th_\ell}}(x) \le \epsilon$ for some $T\prec S$ with $\dim T=\ell$
		then $h_\ell(x)\in U_T$, and
		\item the support of $v$ is contained in the union of $h_\ell^{-1}(U_T)$ for $T\prec S$ with $\dim T = \ell$.
	\end{enumerate}
	Given the claim we may proceed as follows.  Integrate $v$ to obtain a flow $\phi_s$ on $\Delta^k$.
	By 1 we know $\phi_s$ preserves faces. By 2 we know that if $h_\ell(x)\in S'_\ell$ and $\mu_{A_{Th_\ell}}(x) \le \delta_T\tau_T$ for some $T\prec S$ with $\dim T<\ell$ then $\mu_{A_{Th_\ell}}(\phi_s(x)) = \mu_{A_{Th_\ell}}(x)$ for any $s$
	so in particular $(h_\ell\phi_s)^{-1}(S') = h_\ell^{-1}(S')$ and
	for any $t\in [\tau_{T,\dim T},\tau_T]$ we have $(h_\ell\phi_s)^{-1}(S_\ell'\cap\rho_T^{-1}(t)) = h_\ell^{-1}(S'_\ell) \cap\mu_{A_{Th_\ell}}^{-1}(t\delta_T)$.
	By 3 we know that if $s$ is a large enough positive number, then 
	$$ \phi_s(h_\ell^{-1}(S'_\ell)\cap \mu_{A_{Th_\ell}}^{-1}([0,\epsilon] ))\supset h_\ell^{-1}(S'_\ell)\cap h_\ell^{-1}\rho_T^{-1}([0,\tau_T]).$$
	Pick such an $s$ once and for all.
	
	Choose $\delta_T>0$ so $\delta_T\tau_T\le \epsilon$.
	Now consider a vector field $w$ on $[0,1]\times \Delta^k$ of the form $w(u,x) = (1, \alpha(u,x)v(x))$ for an appropriate 
	smooth function $\alpha$. In particular we want to choose $\alpha$ so that 
	if $$Z= h_\ell^{-1}(S'_\ell)\cap \mu_{A_{Th_\ell}}^{-1}([0,\epsilon])- (\phi_{-s}h_\ell^{-1}\rho_T^{-1}([0,\tau_{T,\dim T}))\cap \mu_{A_{Th_\ell}}^{-1}([0,\delta_T\tau_{T,\dim T})))$$
	and $g_T(u,x) = u\mu_{A_{Th_\ell}}(x)+ (1-u)\delta_T\rho_Th_\ell\phi_s(x)$
	then
	$dg_T(w)=0$ on 
	$[0,1]\times Z$.
	Note 
	\begin{eqnarray*}
		dg_T(w) &= & \mu_{A_{Th_\ell}}(x) - \delta_T\rho_Th_\ell\phi_s(x) + \\
		&&\quad \alpha(u,x)(
		u d\mu_{A_{Th_\ell}}(v(x))+ (1-u)\delta_T d\rho_Th_\ell d\phi_s(v(x))).
	\end{eqnarray*}
	But $d\phi_s(v) = v$ so we may solve 
	$$\alpha(u,x) = (\delta_T\rho_Th_\ell\phi_s(x) - \mu_{A_{Th_\ell}}(x))/(u d\mu_{A_{Th_\ell}}(v(x))+ (1-u)\delta_Td\rho_Th_\ell(v(\phi_s(x)))).$$
	Note $d\rho_Th_\ell(v(\phi_s(x))) > 0$ for $x\in Z$ since $v(\phi_s(x)) = d\phi_s(v) \ne 0$ and consequently
	the denominator is never zero if  $x\in Z$.
	Integrate $w$ to obtain a (partial) flow $\psi_t$ on $[0,1]\times \Delta^k$.
	Note that $(1,f(x)) = \psi_1(0,x)$ defines a diffeomorphism $f\colon \Delta^k\to \Delta^k$
	(it's inverse is defined by  $(0,f^{-1}(x)) = \psi_{-1}(1,x)$).
	Suppose $h_\ell(x)\in S'_\ell\cap U_T$ and $g_T(u,x)\in [\delta_T\tau_{T,\dim T},\delta_T\tau_T]$.
	Then either $\mu_{A_{Th_\ell}}(x)\le \delta_T\tau_T $ or $\delta_T\rho_Th_\ell\phi_s(x)\le  \delta_T\tau_T$
	which in either case implies $\mu_{A_{Th_\ell}}(x)\le \epsilon$.
	Also either $\mu_{A_{Th_\ell}}(x)\ge \delta_T\tau_{T,\dim T}$ or $\delta_T\rho_Th_\ell\phi_s(x)\ge  \delta_T\tau_{T,\dim T}$
	so all in all we must have $x\in Z$ so in particular $w(u,x)$ is tangent to the fiber of $g$ at $(u,x)$.
	Consequently $$f(h_\ell^{-1}(S'_\ell)\cap \phi_s^{-1}h_\ell^{-1}\rho_T^{-1}(t))
	= h_\ell^{-1}(S'_\ell)\cap \mu_{A_{Th_\ell}}^{-1}(\delta_T t)$$
	for all $t\in [\tau_{T,\dim T},\tau_T]$ (since $w$ is everywhere tangent to the fiber  $h_\ell^{-1}(S'_\ell)\cap g^{-1}(\delta_T t)$).
	So we may set $h_{\ell+1} = h_\ell \phi_s f^{-1}$ and the induction is proven.

	To finish we must construct $v$.
	First pick $\epsilon$ small enough that condition 5 holds.
	It suffices to construct $v$ locally and piece together with a partition of unity.
	So pick any $y\in \Delta^k$ and we will construct $v$ in a \nbhd\ of $y$.
	If $h_\ell(y)\not\in S'_\ell$ or $h_\ell(y)\not\in U_T$ for any $T\prec S$ with $\dim T=\ell$
	we may take $v=0$ so we may suppose $h_\ell(y)\in S'_\ell\cap U_T$ for some $T\prec S$ with $\dim T = \ell$.
	Suppose first $h_\ell(y)\in T\cap S'_\ell$.
	After reordering the coordinates, we may as well suppose that $A_{Th_\ell} = \{0,1,\ldots,\ell \}$.
	If $\mu_{A_{Uh_\ell}}(y) \le \delta_U\tau_U$ for any $U\prec S$ with $\dim U < \ell$ we must have
	$h_\ell(y)\in U_U$ and thus
	$U\prec T$ and we may also suppose $0\not\in A_{Uh_\ell}$.
	I claim we may locally pick $v(x) = (-\Sigma_{i=\ell+1}^k x_i,0,\ldots,0,x_{\ell+1},x_{\ell+2},\ldots,x_k)$.
	If $\mu_{A_{Uh_\ell}}(y) \le \delta_U\tau_U$ for any $U\prec S$ with $\dim U < \ell$
	then $\nabla \mu _{A_{Uh_\ell}}$ has 1 in the zeroeth and $\ell+1$ through $k$-th place, so
	$d\mu_{A_{Uh_\ell}}(v) = 0$ and 2 holds.
	Likewise $d\mu_{A_{Th_\ell}}(v) = \Sigma_{i=\ell+1}^k x_i >0$ off of $h_\ell^{-1}(T)$ so 4 holds.
	So the only thing to check is 3.
	For convenience, denote the first $\ell+1$ coordinates of $x$ as $w$ and the last $k-\ell$ coordinates as $z$.
	Then near $h_\ell^{-1}(T)$ we have $\rho_Th_\ell(x) = z^TH(w)z + o(|z|^3)$ where for each $w$, $H(w)$ is a 
	positive definite symmetric $(k-\ell)\times(k-\ell)$ matrix.
	So $\nabla \rho_Th_\ell(x) = (0, 2 H(w)z) + o(|z|^2)$.
	So $d\rho_Th_\ell(v(x)) = \nabla \rho_Th_\ell(x)\cdot v(x) = 2z^TH(w)z + o(|z|^3)$
	which is $>0$ for small enough nonzero $z$.
	So 3 holds.
	
	Now suppose $h_\ell(y)\not\in T\cap K_S$.
	Let $T'$ be the smallest dimensional face of $S$ containing $h_\ell(y)$
	(possibly $T'=S$).
	Suppose $T_1,\ldots,T_m$ are all the faces of $S$ so that  $h_\ell(y)\in U_{T_i}$,
	$\rho_{T_i}h_\ell(y) \le \tau_{T_i}$, and $\dim T_i\le\ell$.
	We may order them so $T_1\prec T_2\prec \cdots\prec T_m= T\prec T'$.
	By Theorem \ref{piconstruct0} we know that the map $x\mapsto (\rho_1(x), \rho_2(x), \ldots, \rho_m(x))$ submerses $T'\cap \bigcap_{i=1}^m U_{T_i}$,
	hence we may find a vector $v_0$ at $y$ tangent to the face $h_\ell^{-1}(T')$ 
	so that $d\rho_ih_\ell(v_0)=0$ for $i<m$ and $d\rho_mh_\ell(v_0)=1$.
	We may locally take our vector field $v$ to be the constant vector filed $v_0$.
	Since $\nabla \rho_ih_\ell$ is constant near $y$  for $i<m$ we will still have $d\rho_ih_\ell(v)=0$ near $y$.
	By continuity, $d\rho_mh_\ell(v_0)>0$ near $y$.
	Note that $v$ will have $j$-th coordinate equal to 0 for all $j\not\in A_{T'h_\ell}$.
	Consequently $v$ will be tangent to all faces of $\Delta^k$ near $h_\ell^{-1}(T')$.
	After possibly replacing $\epsilon$ with a smaller value, 4 will hold.
\end{proof}

\section{Constructing a Tico Prespine}

In the last section we presented a construction which produces a cellular tico from any 
simplicial stratification.  Here we will give a simplicial stratification for which this
construction gives a tico prespine.

We suppose as always that $X$ is a compact smooth manifold with boundary $M$.
Let $c\colon M\times [0,\infty)\to X$ be a smooth collar.
Take any simplical stratification of $X' = X-c(M\times [0,1))$  so that
$M' = c(M\times 1)$ is a union of strata.
For example a smooth triangulation can give such a stratification.

To simplify the proof, for strata $S\subset M'$ we will specify the function $\rho_S$,
as allowed by Theorem \ref{piconstruct0}.
Let $S'\subset M$ be defined by $S = c(S'\times 1)$.
Pick any \MB\ function $\rho'_{S'}$ around the submanifold $S'$ of $M$, and specify $\rho_S$
by the formula $\rho_Sc(x,t) = \rho_{S'}(x) + (t-1)^2$.

Let $\A$ be the cellular tico constructed in the previous section.
Pick a tico map $f\colon (X,|\A|)\to (\R,0)$.
After squaring, we may as well suppose that $f$ is nonnegative.
Let $Z=\bigcup B_S$.
Note that $X-Z\subset c(M\times [0,1))$ because by construction for any stratum $S$,
$S\subset B_S \cup \bigcup_{T\prec S} B_T$.
We will show that if $\epsilon>0$ is small enough, then $dfc(x,t)/dt < 0$ if $c(x,t)\not\in Z$
and $fc(x,t) = \epsilon$.
Recall $U=f^{-1}([0,\epsilon])$ is a regular \nbhd\ of $|\A|$ for small $\epsilon$.
So the vector field $dc(\partial/\partial t)$ always points into $U$ on $\partial U - Z$.
So for a fixed $x\in M$ the curve $c(x,t)$ starts out at $t=0$ in $M$, enters $Z\cup U$ at
some $t=g(x)$ and never leaves $Z\cup U$ afterwards since at a point of leaving we would have to have
 $dfc(x,t)/dt \ge 0$.
So $X-(Z\cup U) = \{c(x,t) \mid x\in M, 0\le t < g(x)\}$
and since the function $g$ is smooth by the inverse function theorem, $\A$ is a tico prespine as desired.

Let $K=\{(x,t)\mid c(x,t)\not\in Z, dfc(x,t)\ge 0 \}$.
We need to show that $\Cl K\cap c^{-1}(|\A|)$ is empty, then we can find an $\epsilon$ as above.
Suppose to the contrary that $(x_0,t_0)\in \Cl K$ and $y_0 = c(x_0,t_0)\in |\A|$.
Let $S_1,\ldots,S_k$ be the sheets of $\A$ which contain $y_0$.
Note all the $S_i$ are contained in $M'$ and we may as well order them so
$S_1\prec S_2\prec \cdots \prec S_k$.
By Lemma \ref{BS_complement} we know that near $(x_0,t_0)$ each $c^{-1}(S_i)$
is given by the equation $\rho_{S'_i}(x) + (t-1)^2 = \tau_{S_i}$.
So near $(x_0,t_0)$,  we have 
$$fc(x,t) = u(x,t)\Pi_{i=1}^k (\rho_{S'_i}(x) + (t-1)^2 - \tau_{S_i})^{b_i}$$
for some exponents $b_i$ and smooth function $u$ with $u(x_0,t_0)>0$.
Also near $(x_0,t_0)$, $c^{-1}(X-Z)$ is given by the inequalities
$\rho_{S'_i}(x) + (t-1)^2 > \tau_{S_i}$ for all $i$.
Then 
$$dfc(x,t)/dt = fc(x,t)(du(x,t)/dt/u+\Sigma_{i=1}^k 2b_i(t-1)/(\rho_{S'_i}(x) + (t-1)^2 - \tau_{S_i})).$$
This must be negative on $c^{-1}(X-Z)$ near $(x_0,t_0)$ since $u'/u$ is bounded, $t-1 < 0$, and the 
denominators $\rho_{S'_i}(x) + (t-1)^2 - \tau_{S_i}$ are positive but near $0$.
Thus we have a contradiction, so we may find $\epsilon$ as desired.

\section{Whitney Appendix}

There is a fancier version of the following where $M$ can have boundary but we don't need it,
so let's keep it relatively simple.
Suppose $X$ is a smooth manifold, $Y\subset X$
is a submanifold, and  $f\colon U\to Z$ is a smooth map where $U\subset X$ is open.
We say $f$ is locally linear with respect to $Y$ if for every $x\in U\cap Y$ we may choose local
coordinates around $x$ and $f(x)$ so that in these coordinates, $f$ is linear, 
 $Y$ is a linear subspace, and $f^{-1}f(x)$ is transverse to $Y$.
 
 \begin{lemma}\label{make_retract}
	Suppose $M$ is a smooth manifold without boundary,
	 $N\subset M$ is a smooth submanifold,
	 $\{U_i\}_{i\in A}$ is a locally finite  collection of open subsets of $M$,
	 $q_i\colon U_i\to Z_i$ are smooth maps
	to manifolds $Z_i$, and  $C_i\subset M$ are closed subsets so that $C_i\subset U_i$. 
	For each nonempty subset
	$D\subset A$ with  $\bigcap_{i\in D} U_i$ nonempty we suppose that the map
	$\Pi_{i\in D}q_i\colon \bigcap_{i\in D} U_i\to \Pi_{i\in D}Z_i$
	is locally linear with respect to $N$.
	Then there is a \nbhd\ $V$ of $N$ in $M$ and a smooth
		retraction $\pi\colon V\to V\cap N$ so that
 $q_i\pi(x)=q_i(x)$ for all $i$ and
			all $x$ in some \nbhd\ of $C_i\cap \pi^{-1}(C_i\cap N)$.
			\end{lemma}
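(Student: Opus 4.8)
The plan is to build $\pi$ by an inductive patching argument over the index set $A$, using the local-linearity hypothesis to produce local retractions that respect the $q_i$, and then gluing them with a partition of unity in a way that does not destroy the compatibility $q_i\pi = q_i$ near $C_i$. First I would reduce to the case where $A$ is finite, or at least work locally: since $\{U_i\}$ is locally finite, each point of $M$ has a neighborhood meeting only finitely many $U_i$, so it suffices to construct $\pi$ on such neighborhoods in a way that agrees on overlaps, and then patch. Near a point $x\in N$, let $D = \{i\in A\mid x\in C_i\}$ (or more safely $\{i : x\in U_i\}$, shrinking so that $x\notin C_i\Rightarrow x\notin$ a closed neighborhood we care about). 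By hypothesis $\Pi_{i\in D}q_i$ is locally linear with respect to $N$: in suitable coordinates, $N$ is a linear subspace, this product map is linear, and $(\Pi_{i\in D}q_i)^{-1}(\text{pt})$ is transverse to $N$. In such coordinates one simply takes $\pi$ to be linear projection onto $N$ along the fibers of $\Pi_{i\in D}q_i$ — transversality guarantees the fiber directions together with $N$ span, so such a linear projection exists — and then $q_i\pi = q_i$ holds identically near $x$ for all $i\in D$, while for $i\notin D$ we do not care because $x$ is away from $C_i$.

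The main work is the gluing. I would order $A$ (or the finitely many relevant indices) as $i_1,\dots,i_m$ and build $\pi$ in stages $\pi^{(0)} = \mathrm{id}$, $\pi^{(1)},\dots,\pi^{(m)} = \pi$, where at stage $r$ I arrange $q_{i_r}\pi^{(r)} = q_{i_r}\pi^{(r-1)}$ near $C_{i_r}\cap (\pi^{(r-1)})^{-1}(C_{i_r}\cap N)$ and $\pi^{(r)}$ still retracts onto $N$. Concretely, on a neighborhood of $C_{i_r}$ I have a local retraction $\sigma_r$ (from the local-linear coordinates) with $q_{i_r}\sigma_r = q_{i_r}$, and away from $C_{i_r}$ I keep $\pi^{(r-1)}$; I interpolate between $\sigma_r$ and $\pi^{(r-1)}$ using a bump function $\lambda_r$ that is $1$ near $C_{i_r}$ and supported in $U_{i_r}$. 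The subtlety is that naive convex interpolation of two retractions need not land in $N$ and need not preserve the earlier compatibilities; the fix is to interpolate the \emph{retracting flows} rather than the maps, or equivalently to interpolate in the normal-bundle model of $N$ (using a tubular neighborhood of $N$, identify points near $N$ with pairs (basepoint, normal vector) and take $\pi^{(r)}(x) = $ basepoint after rescaling the normal vector by a factor depending on $\lambda_r$), so that the image always lies in $N$. To see the earlier compatibilities survive, note that where $\lambda_r \equiv 1$ we have $\pi^{(r)} = \sigma_r$, and the set where $q_{i_s}\pi^{(s)} = q_{i_s}$ was required for $s<r$ is, by the local linearity of the product map $q_{i_s}\times q_{i_r}$, also a set where $\sigma_r$ preserves $q_{i_s}$ — this is exactly why the hypothesis is stated for all subsets $D$, not just singletons.

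The hard part will be verifying that, after all $m$ stages, the compatibility $q_i\pi = q_i$ genuinely holds on a \emph{full neighborhood} of $C_i\cap \pi^{-1}(C_i\cap N)$ for every $i$ simultaneously, rather than on some smaller ad hoc set that shrinks at each stage. I expect to handle this by being careful about the order of quantifiers: fix, once and for all at the start, open sets $W_i$ with $C_i\subset W_i\subset \Cl W_i\subset U_i$ and bump functions $\lambda_i \equiv 1$ on $W_i$, supported in $U_i$; then the stage-$r$ interpolation changes nothing on $\bigcap_{s: x\in W_s} (\text{local-linear locus for } \{i_s\})$, and one checks that on $W_i$ the composite $q_i\pi$ telescopes back to $q_i$ because each stage either fixes $q_i$ (when we are inside $W_i$ at that stage) or occurs outside a neighborhood of $C_i$. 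Shrinking $V$ to the intersection of the finitely-many-relevant domains and invoking local finiteness to globalize finishes the construction; the retraction property $\pi|_N = \mathrm{id}$ is automatic from the normal-bundle model since each $\sigma_r$ and $\pi^{(r-1)}$ fixes $N$ pointwise.
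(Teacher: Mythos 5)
Your opening move --- choose coordinates in which $N$ and the relevant product $\Pi_{i\in D}q_i$ are simultaneously linear and take linear projection along the fibers --- is the right local construction, and it is the paper's notion of a ``good parameterization.'' But the interpolation step, which is the crux, has a gap. The paper interpolates \emph{in the linearizing chart itself}: it arranges a parameterization $h\colon V_0\to M$ with $V_0\subset\R^m$ \emph{convex}, $h^{-1}(N)=V_0\cap E$ for a linear subspace $E$, and $h_1^{-1}q_Dh$ equal to a linear map $f$, then sets $\pi(x)=h\bigl((1-\alpha(x))\,gh^{-1}(x)+\alpha(x)\,h^{-1}\sigma(x)\bigr)$ with $g$ the orthogonal projection onto $E$ along $\ker f$. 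Convexity of $V_0$ plus linearity of $E$ forces the convex combination into $N$; linearity of $f$ plus $fg=f=fh^{-1}\sigma$ gives $q_D\pi=q_D$. Both facts fall out at once from a single coherent choice of chart. Your substitute --- interpolating ``retracting flows,'' or working in a tubular-neighborhood/normal-bundle model and rescaling the normal vector --- is a different mechanism and does not obviously do the job: rescaling the normal vector does not change the basepoint, so as written it is not an interpolation between two retractions, and nothing in the normal-bundle picture makes the result preserve $q_{i_s}$. You observe correctly that the local linearity of $q_{i_s}\times q_{i_r}$ is what should make the stages compatible, but that observation only pays off if $\sigma_r$ is \emph{built} from coordinates that linearize the full product and the interpolation is carried out in those same convex coordinates --- which is precisely the paper's construction, not the one you describe.

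Second, the globalization is only gestured at. Local finiteness gives finiteness on compacta, but that does not by itself let you conclude with ``patch and invoke local finiteness'': naive patching of retractions destroys both the $N$-valuedness and the $q_i$-compatibility, which is the whole difficulty. The paper handles this by proving the \emph{relative} version (Lemma \ref{make_retract_rel}), where the retraction is prescribed near a closed set $Y$; that relative form both drives the induction over a finite cover by good parameterizations and, via a proper function $\mu\colon M\to[0,\infty)$, assembles the noncompact case from compact pieces. Your proposal needs an analogous device and does not supply one.
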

			
\begin{proof}
This follows from the relative version Lemma \ref{make_retract_rel} below with $Y$ empty
and $K=N$.
\end{proof}

\begin{lemma}\label{make_retract_rel}
	Suppose $M$ is a smooth manifold without boundary,
	 $N\subset M$ is a smooth submanifold,
	 $\{U_i\}_{i\in A}$ is a locally finite  collection of open subsets of $M$,
	$q_i\colon U_i\to Z_i$ are smooth maps
	to manifolds $Z_i$, and $C_i\subset M$ are closed subsets so that $C_i\subset U_i$. 
	For each nonempty subset
	$D\subset A$ with  $\bigcap_{i\in D} U_i$ nonempty we suppose that the map
	$\Pi_{i\in D}q_i\colon \bigcap_{i\in D} U_i\to \Pi_{i\in D}Z_i$
	is locally linear with respect to $N$.
	Suppose $Y\subset N$ and $K\subset N$ are closed subsets of $N$,  $U$ is a \nbhd\
	of $Y$ in $M$, and $\sigma\colon U\to U\cap N$ is a smooth 
	retraction  so that 
	$q_i\sigma(x)=q_i(x)$ for all $i$ and
	all $x$ in $U_i\cap \sigma^{-1}(U_i\cap N)$.
	Then there is a \nbhd\ $V$ of $K\cup Y$ in $M$ and a smooth
		retraction $\pi\colon V\to V\cap N$ so that:
		\begin{enumerate}
			\item $\pi(x)=\sigma(x)$ for all $x$ in some \nbhd\ of $Y$.
			\item $q_i\pi(x)=q_i(x)$ for all $i$ and
			all $x$ in some \nbhd\ of $C_i\cap \pi^{-1}(C_i\cap N)$.
			\end{enumerate}
			\end{lemma}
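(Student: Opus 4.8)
The plan is to construct $\pi$ by a one-step-at-a-time extension of the given partial retraction $\sigma$, working outward from $Y$ along the stratification of $N$ induced by the various intersections $\bigcap_{i\in D}C_i$ (or, more conveniently, by looking at the sets where exactly a given subset of the $q_i$ are "active"). The local linearity hypothesis is precisely what makes each single extension step an exercise in gluing together affine retractions. So first I would fix, for each $x\in N$, the finite set $D(x)=\{i\in A\mid x\in U_i\}$; by local finiteness this is locally constant-ish (it can only drop in a neighborhood of $x$), and by the hypothesis the map $\prod_{i\in D(x)}q_i$ is locally linear with respect to $N$ near $x$. In such local coordinates $N$ is a linear subspace, $\prod q_i$ is linear, and the fibers are transverse to $N$; hence there is a canonical affine retraction onto $N$ that respects all the $q_i$ simultaneously — project along the fibers, or equivalently write the ambient coordinates as $N$-coordinates times transverse coordinates adapted so each $q_i$ depends only on the transverse part and kill the transverse part. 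Call this local model retraction $\pi_x$.

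Next I would set up the induction. Since $N$ is a manifold (hence second countable, paracompact) and the collection $\{U_i\}$ is locally finite, I can choose a locally finite open cover $\{W_\alpha\}$ of $N$ in $M$, each $W_\alpha$ small enough to sit inside a single local model as above and to meet only finitely many $U_i$, together with a subordinate partition of unity $\{\lambda_\alpha\}$. Enumerate (or well-order) the $W_\alpha$ and define a sequence of partial retractions $\pi_0=\sigma$ (defined near $Y$), $\pi_1,\pi_2,\dots$, where $\pi_{\alpha}$ is obtained from $\pi_{\alpha-1}$ by "mixing in" the local model $\pi_{W_\alpha}$ using $\lambda_\alpha$. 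The mixing is done in the chart for $W_\alpha$: in linear coordinates where $N=\R^p\times 0$ and $\pi_{W_\alpha}(u,v)=(u,0)$, one sets $\pi_\alpha(u,v)=(1-\lambda_\alpha)\,\pi_{\alpha-1}(u,v)+\lambda_\alpha(u,0)$ near $N$ — a convex combination of two retractions onto the same linear $N$ is again a retraction onto $N$, and on the (shrinking) neighborhood of $N$ it is still a diffeomorphism onto its image on each $N$-slice by the inverse function theorem, so after passing to a small enough neighborhood of $N$ it remains an embedding-type retraction. The key point preserved at each step: on the locus where $\pi_{\alpha-1}(x)$ and $x$ both lie in $U_i$ and $q_i\pi_{\alpha-1}(x)=q_i(x)$, the same holds for $\pi_\alpha$, because both $\pi_{\alpha-1}(x)$ and $(u,0)$ have the same $q_i$-value as $x$ in the adapted linear coordinates (there $q_i$ ignores the transverse coordinate), and $q_i$, being linear there, commutes with convex combinations. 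Condition (1) is maintained because near $Y$ we only mix with charts, and we can arrange (shrinking $U$ if needed) that $\lambda_\alpha\equiv 0$ near $Y$ for every $\alpha$ with $W_\alpha$ not already "inside" $\sigma$'s domain, or simply that $\sigma$ is itself one of the local models so the convex combination is trivial there.

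Finally, because the cover is locally finite each point of $N$ has a neighborhood on which only finitely many mixing steps are nontrivial, so the sequence stabilizes locally and $\pi=\lim\pi_\alpha$ is a well-defined smooth retraction on a neighborhood $V$ of $N\supset K\cup Y$; restricting $V$ we get $V\cap N$ as target and we keep only the part near $K\cup Y$. Property (2) holds because for each $i$ the set $C_i\cap\pi^{-1}(C_i\cap N)$ is contained in $U_i\cap\pi^{-1}(U_i\cap N)$, an open set on which, by the preservation argument above carried through all stages, $q_i\pi=q_i$. The main obstacle I anticipate is the bookkeeping to make the convex-combination-of-retractions argument globally coherent: one must (a) guarantee that after each mix the map is still a retraction and still a local diffeomorphism normal to $N$ — handled by shrinking the tubular neighborhood at each step and using that the space of retractions onto a fixed linear subspace is convex — and (b) verify that the "$q_i$ already agree" locus is genuinely \emph{open} and is respected, which is where local linearity (and in particular the transversality of the fibers to $N$, so that the $q_i$-preserving locus is cut out cleanly in the linear model) does the real work. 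Everything else is a standard partition-of-unity patching argument.
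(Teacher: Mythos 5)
Your plan is essentially the paper's proof: build the retraction by blending, via a bump/partition-of-unity weight in adapted linear coordinates, the already-constructed partial retraction with a local affine projection onto $N$, using linearity of the $q_i$'s in those coordinates to see that the convex combination preserves each $q_i$-value; and iterate using local finiteness. The paper packages this as a finite induction on charts covering $K-U$ when that set is compact (each step blends exactly two retractions via one bump function $\alpha$) and then handles the general case by a separate annular decomposition with a proper exhaustion $\mu$, whereas you propose a single locally finite pass; both are valid forms of the same patching idea.

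Two small points to tighten. First, you write that in the chart one should choose transverse coordinates "so each $q_i$ depends only on the transverse part and kill the transverse part" --- this is backwards (killing what $q_i$ depends on would destroy it); you want the transverse directions inside $\ker dq_i$, so $q_i$ depends only on the $N$-coordinates and is unchanged by the projection. That is exactly the paper's linear change of coordinates arranging $E^\perp\subset\ker f$. Second, your bookkeeping with $D(x)=\{i\mid x\in U_i\}$ has the wrong monotonicity (since the $U_i$ are open, $D$ can only \emph{grow} for $y$ near $x$, not drop), and your claimed conclusion "$q_i\pi=q_i$ on all of $U_i\cap\pi^{-1}(U_i\cap N)$" is stronger than what the blending argument delivers: a chart that barely touches $U_i$ need not have $q_i$ among the maps linearized on it, so $q_i$-preservation can fail there. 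The fix, which is exactly what the paper does, is to interpose closed neighborhoods $C_i\subset U_i'\subset U_i$ and only insist a chart account for those $i$ with the chart meeting $U_i'$; the conclusion you actually need, $q_i\pi=q_i$ near $C_i\cap\pi^{-1}(C_i\cap N)$, is then exactly what survives the iteration.
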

			
			\begin{proof}
				Let $n=\dim N$, $m=\dim M$, and
				for each $i$ let $U_i'$ be a closed \nbhd\ of $C_i$ with  $U_i'\subset U_i$.
				By replacing $M$ with $M-(\Cl N - N)$ we may as well suppose that $N$
				is a closed subset of $M$.

				Suppose $h\colon V_0\to M$ is a parameterization,
				i.e., a smooth embedding from an open subset $V_0\subset \R^m$
				giving local coordinates on $M$.
				Let
				$D=\{ i\mid h(V_0)\cap U'_i\ne \emptyset\}$,
				$U_D=\bigcap _{i\in D} U_i$,
				$Z_D= \Pi_{i\in D} Z_i$,
				and $q_D = \Pi_{i\in D}q_i\colon U_D\to Z_D$.
				For convenience, if $D$ is empty we let $U_D=M$, let $Z_D$ be a point,
				and
				let $q_D\colon M\to Z_D$ be the unique map.
				We
				say $h$ is a good parameterization if $V_0$ is convex and
				there is a 
				parameterization $h_1\colon V_1\to Z_D$ so that
				$h$ and $h_1$ make $q_D$ and $N$ linear,
				in particular so that:
				\begin{itemize}
					\item $V_0$ is convex.
					\item $h(V_0)\subset  U_D$ and 
					$q_D h(V_0)\subset h_1(V_1)$.
					\item $h^{-1}(N) = V_0\cap  E$ for some linear subspace
					$E\subset \R^m$. 
					\item  
					$h_1^{-1}q_D h(x)$ is the restriction of some linear function $f$.
					\end{itemize}
					
					Clearly we may cover $N$ with the images of good parameterizations,
					for any $x$ take any local linear parameterization around $x$ and then delete
					all $U_i'$ which do not intersect $x$, intersect with all other $U_i$, and further 
					restrict to be convex.
					We first suppose that we may cover $K-U$ with a finite number of good parameterizations,
					for example if $K-U$ is compact.
						
					Suppose first only one is needed to cover,
					$K-U\subset h(V_0)$.
					The kernel of $f$ is transverse to $E$, so
					after a linear change of coordinates, we may as well suppose that the kernel of $f$
					contains $E^\perp$.
					Let $g\colon \R^m\to E$ be orthogonal projection.
					Let $\alpha\colon M\to [0,1]$ be a smooth function 
					with support in $U$
					so that  $\alpha$ is 1 on an open \nbhd\ $U'$ of $Y$.
					Define $V= U'\cup h(V_0)$ and
					define $\pi(x)$ to be $\sigma(x)$ for $x\in U'$ and to be 
					$$h((1-\alpha(x))gh^{-1}(x)+\alpha(x)h^{-1}\sigma(x))$$
					for $x\in h(V_0)$.
					Suppose $x$ is near $C_i\cap \pi^{-1}(C_i\cap N)$,
					in particular $x\in U_i'$.
					If $x\in U'$ then $q_i\pi(x) = q_i\sigma(x) = q_i(x)$
					as desired.  So suppose $x = h(y)$ for $y\in V_0$.
					We must have $i\in D$.
					Since $y-g(y)\in E^\perp\subset \ker f$ we must have $fg(y) = f(y)$.
					Since $fh^{-1}\sigma(x) = h_1^{-1}q_D\sigma(x) = h_1^{-1}q_D(x) = f(y)$ 
					we have 
					$$h_1^{-1}q_D\pi(x) = f((1-\alpha(x))g(y)+\alpha(x)h^{-1}\sigma(x))$$
					$$=(1-\alpha(x))fg(y)+\alpha(x)fh^{-1}\sigma(x)$$
					$$=(1-\alpha(x))f(y)+\alpha(x)f(y)=f(y)=h_1^{-1}q_D(x)$$
					and consequently $q_i\pi(x)=q_i(x)$.
					
					Now suppose that $K-U$ is covered by $\ell$ good parameterizations, $K-U\subset \bigcup_{j=1}^\ell h_j(V_j)$.
					Let $W$ be a \nbhd\ of $K-U - \bigcup_{j=1}^{\ell-1} h_j(V_{j})$
					in $N\cap h_{\ell}(V_{\ell})$.
							Note $(K-W)-U$ is covered by $\ell-1$ good parameterizations so by induction on $\ell$ there is a \nbhd\ $V'$
							of $(K-W)\cup Y$ in $M$ and a smooth
							retraction $\pi'\colon V'\to V'\cap N$
							so that
							$\pi'(x)=\sigma(x)$ for all $x$ in some \nbhd\ of $Y$
							and
							$q_i\pi'(x)=q_i(x)$ for all $i$ and
							all $x$ in some \nbhd\ of $C_i\cap \pi'{}^{-1}(C_i\cap N)$.
							But $K-V'\subset W \subset h_{\ell}(V_{\ell})$
							so by the $\ell=1$ case proven above (with $Y$ replaced by $Y\cup(K-U - \bigcup_{j=1}^{\ell-1} h_j(V_{j}))$ ) there is a $\pi\colon V\to V\cap N$
							as desired.
							
							Now we must prove the case where $K-U$ is not compact,
							for this we may as well suppose $K=N$.
							Pick a proper continuous function $\mu\colon M\to [0,\infty)$.
							By this Lemma with $K = N\cap \mu^{-1}([3j-1,3j])$, we have \nbhd s 
							$V_j$ of $(N\cap \mu^{-1}([3j-1,3j]))\cup Y$ and smooth retractions $\sigma_j\colon V_j\to N\cap V_j$  so that 
							\begin{enumerate}
								\item $\sigma_j(x)=\sigma(x)$ for all $x$ in some
								\nbhd\ $V'_j$ of $Y$.
								\item $q_i\sigma_j(x)=q_i(x)$ for all $i$ and
								all $x$ in some \nbhd\ of $C_i\cap \sigma_j^{-1}(C_i\cap N)$.
								\end{enumerate}
								Let $V''_j = V_j\cap \mu^{-1}((3j-2,3j+1))$.
								By this Lemma with 
								\begin{eqnarray*}
									K &=& N\cap \mu^{-1}([3j,3j+2]),\\
									Y &=& Y \cup (N\cap \mu^{-1}([3j-1,3j]\cup [3j+2,3j+3])), \text{\ and}\\
									U &=& (V'_j\cap V'_{j+1})\cup V''_j\cup V''_{j+1}
									\end{eqnarray*}
									we have \nbhd s 
									$W_j$ of $(N\cap \mu^{-1}([3j-1,3j+3]))\cup Y$ and smooth 
									retractions $\tau_j\colon W_j\to N\cap W_j$  so that 
									\begin{enumerate}
										\item $\tau_j(x)=\sigma_j(x)$ for all $x$ in a \nbhd\ of $N\cap \mu^{-1}([3j-1,3j]$.
										\item $\tau_j(x)=\sigma_{j+1}(x)$ for all $x$ in a \nbhd\ of $N\cap \mu^{-1}([3j+2,3j+3]$.
										\item $\tau_j(x)=\sigma(x)$ for all $x$ in a \nbhd\ of $Y$.
										\item $q_i\tau_j(x)=q_i(x)$ for all $i$ and
										all $x$ in some \nbhd\ of $C_i\cap \tau_j^{-1}(C_i\cap N)$.
										\end{enumerate}
										After deleting the closures of 
										$\{x\in W_j\cap V_j\cap \mu^{-1}([3j-1,3j]) \mid \tau_j(x)\ne \sigma_{j}(x)\}$
										and 
										$\{x\in W_j\cap V_{j+1}\cap \mu^{-1}([3j+2,3j+3]) \mid \tau_j(x)\ne \sigma_{j+1}(x)\}$
										and $W_j\cap \mu^{-1}([3j-1,3j]) - V_j$ and
										$W_j\cap \mu^{-1}([3j+2,3j+3]) - V_{j+1}$
										from $W_j$ we may as well also assume that
										\begin{enumerate}
											\item $\tau_j(x)=\sigma_j(x)$ for all $x$ in $W_j\cap \mu^{-1}([3j-1,3j]$.
											\item $\tau_j(x)=\sigma_{j+1}(x)$ for all $x$ in $W_j\cap \mu^{-1}([3j+2,3j+3]$.
											\end{enumerate}
											Now set 
											$$V = \bigcup_{j=0}^\infty W_j\cap \mu^{-1}((3j-1,3j+3))
											$$
											and define $\pi\colon V\to N$ by $\pi(x) = \tau_j(x)$
											for $x\in{W_j\cap \mu^{-1}((3j-1,3j+3))}$.											\end{proof}

\begin{lemma}\label{Whitney_submersion}
	Suppose $S,T$ are disjoint  submanifolds of $M$ satisfying the Whitney conditions, 
	$U$ is a \nbhd\ of $S$ in $M$,
	$\rho\colon U\to [0,\infty)$ is \MB\ around $S$,
	and $\pi\colon U\to S$ is a smooth retraction.
	Then there is a \nbhd\ $U'$ of $S$ in $U$ so that
	$\rho\times \pi|\colon U'\cap T\to (0,\infty)\times S$
	is a submersion.
\end{lemma}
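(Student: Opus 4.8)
The plan is to localize. Being a submersion is a pointwise condition, so it suffices to produce for each $s\in S$ an open neighborhood $W_s\subset U$ of $s$ on which $\rho\times\pi|\colon W_s\cap T\to(0,\infty)\times S$ is a submersion, and then take $U'=\bigcup_{s\in S}W_s$. If $s\notin\Cl T$, simply choose $W_s$ disjoint from $T$. So fix $s\in S\cap\Cl T$. Applying Lemma \ref{paramMorseBott} to a small disc neighborhood $N$ of $s$ in $S$ (whose normal bundle is trivial), to the restriction of $\rho$, and to the restriction of $\pi$, we obtain coordinates $(u,v)\in\R^n\times\R^{m-n}$ on a neighborhood $W_s$ of $s$ (where $n=\dim S$, $m=\dim M$) in which $S=\{v=0\}$, $\pi(u,v)=(u,0)$, and $\rho(u,v)=|v|^2$; since $S\cap T=\emptyset$ we have $T\cap W_s\subset\{v\neq0\}$, so $\rho>0$ on $T\cap W_s$. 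The Whitney conditions A and B for the pair $(T,S)$ are diffeomorphism invariant, hence hold in these coordinates. Since $d\rho_{(u,v)}(a,b)=2v\cdot b$ and $d\pi_{(u,v)}(a,b)=a$, the map $\rho\times\pi$ is a submersion at $t=(u,v)\in T\cap W_s$ if and only if (i) $d\pi_t$ restricted to $T_tT$ is onto $\R^n$, and (ii) $d\rho_t$ does not vanish identically on $L_t:=T_tT\cap(\{0\}\times\R^{m-n})$. I will establish (i) and (ii), after shrinking $W_s$, using Whitney A for (i) and both conditions for (ii).

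For (i), suppose not: there are $t_i=(u_i,v_i)\in T\cap W_s$ with $t_i\to s$ and $d\pi_{t_i}|_{T_{t_i}T}$ not onto $\R^n$. Represent each $T_{t_i}T$ by an orthonormal frame in $\R^m$ and pass to a subsequence so these frames converge to a frame spanning a subspace $\tau\subset\R^m$ with $\dim\tau=\dim T$. Non-surjectivity of the coordinate projection $\R^m\to\R^n$ on $T_{t_i}T$ is the vanishing of all $n\times n$ minors of the projected frame, a closed condition, so it passes to the limit: the projection is not onto on $\tau$. But Whitney condition A gives $T_sS=\R^n\times\{0\}\subset\tau$, which does project onto $\R^n$, a contradiction. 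So (i) holds on a neighborhood of $s$; shrink $W_s$ so that it holds throughout $T\cap W_s$, so that $\dim L_t=\dim T-n$ for all $t\in T\cap W_s$.

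For (ii), suppose not: there are $t_i=(u_i,v_i)\in T\cap W_s$ with $t_i\to s$, $v_i\neq0$, and $d\rho_{t_i}\equiv0$ on $L_{t_i}=\{0\}\times P_{t_i}$ with $P_{t_i}\subset\R^{m-n}$; equivalently $v_i\perp P_{t_i}$, and $\dim P_{t_i}=\dim T-n$ by (i). Pass to a subsequence on which $T_{t_i}T\to\tau$, $\{0\}\times P_{t_i}\to\{0\}\times P$ in the Grassmannian, and $v_i/|v_i|\to w$ with $|w|=1$. Then $\{0\}\times P\subset\tau$ (a limit of subspaces lying inside converging subspaces lies in the limit), $\dim P=\dim T-n$, and $w\perp P$. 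Taking $s_i=\pi(t_i)=(u_i,0)\in S$, the secant direction from $s_i$ to $t_i$ is $(0,v_i/|v_i|)\to(0,w)$, so Whitney condition B gives $(0,w)\in\tau$, while Whitney A again gives $\R^n\times\{0\}\subset\tau$. As $w\neq0$ and $w\perp P$, the subspace $(\R^n\times\{0\})+(\{0\}\times(\R w+P))$ has dimension $n+1+\dim P=\dim T+1$ and is contained in $\tau$, contradicting $\dim\tau=\dim T$. So (ii) holds on a neighborhood of $s$, completing the construction of $W_s$ and hence of $U'$.

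The main obstacle is the limiting analysis: one must renormalize the secant vectors (dividing $v_i$ by $|v_i|$) before invoking Whitney B, track exactly which subspaces survive into the Grassmannian limit $\tau$, and combine Whitney A, Whitney B, and part (i) into the single dimension count $\dim\tau\geq\dim T+1$ that produces the contradiction. The one other delicate point is the reduction to Morse normal form: Lemma \ref{paramMorseBott} needs a trivial normal bundle, which is why we restrict to a disc neighborhood of $s$ in $S$, and we use that the Whitney conditions survive the resulting change of coordinates.
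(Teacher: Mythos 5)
Your proof is correct, and it takes a genuinely different route from the paper's. You first put $\rho$ and $\pi$ simultaneously into strict normal form $\rho(u,v)=|v|^2$, $\pi(u,v)=(u,0)$ via Lemma \ref{paramMorseBott} (applied over a disc in $S$ so the normal bundle is trivial), which trivializes the Jacobian and lets you cleanly split the submersion condition into two independent pieces, (i) surjectivity of $d\pi$ on $T_tT$ and (ii) non-vanishing of $d\rho$ on $T_tT\cap\ker d\pi$, each handled by a separate, short Grassmannian-limit argument: (i) is pure Whitney A, and (ii) is a dimension count $\dim\tau\geq\dim T+1$ built from Whitney A, Whitney B applied to the renormalized secants $(0,v_i/|v_i|)$, and part (i). The paper instead works in a chart where $S$ is merely a coordinate subspace, keeps $\rho$ and $\pi$ general, and uses Lemma \ref{division} to write $\rho p(x)=\pi_0'(x)^TL(x)\pi_0'(x)$ and $\pi_1'p^{-1}\pi p(x)=\pi_1'(x)+K(x)\pi_0'(x)$; it then evaluates the Jacobian of $\rho\times\pi$ on a specific family of $1+\dim S$ tangent vectors (perturbations of $e_i$, $i>k$, and the rescaled secant $\pi_0(y_j)/|\pi_0(y_j)|^2$) and shows their images converge to linearly independent vectors, contradicting in one stroke the rank deficiency. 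Your decomposition makes the separate roles of Whitney A and Whitney B transparent and avoids the explicit Jacobian computation; the paper's version avoids invoking the parameterized Morse lemma and the localization step, packaging everything into a single linear-independence-of-limits argument. Both are valid, and both hinge on the same two ingredients: the limit tangent plane $\tau$ (or $E$) must contain $T_sS$ by Whitney A and the limiting secant direction by Whitney B, and $d\rho$ cannot kill that secant direction because $\rho$ is nondegenerate quadratic in the normal directions.
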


\begin{proof}
	If not, there is a sequence of points $z_i\in T$, $i=1,\ldots$
	so that $z_i\to z\in S$
	and $\rho\times \pi$ restricted to $T$ has rank
	$< 1 + \dim S$ at each $z_i$.
	Choose a smooth parameterization
	$p\colon V\to M$ of a \nbhd\ of $z$
	so that $V\subset \R^m$ and 
	$p^{-1}S=V\cap  \bigcap_{i=1}^k\R^m_i$.
	Let $y_i=p^{-1}(z_i)$ and $y=p^{-1}(z)$.
	Define $\pi_0\colon \R^m\to \R^m$, $\pi_1\colon \R^m\to \R^m$,
	$\pi_0'\colon \R^m\to \R^k$,
	and $\pi_1'\colon \R^m\to \R^{m-k}$
	by $\pi_0(x) = (x_1,\ldots,x_k,0,\ldots,0)$,
	$\pi_1(x)=(0,\ldots,0,x_{k+1},\ldots,x_m)$,
	$\pi'_0(x) = (x_1,\ldots,x_k)$,
	and
	$\pi_1'(x)=(x_{k+1},\ldots,x_m)$.
	We may assume after taking a subsequence that the tangent
	spaces to $p^{-1}T$ at $y_i$ converge to some subspace $E$ of
	$\R^{m}$ and 
	$\pi_0(y_i)/|\pi_0(y_i)|$ converge to some
	unit vector $v$.
	
	By the Whitney conditions we know $v\in E$ and
	$0\times \R^{m-k}\subset E$.
	Let $e_i\in \R^m$ be the vector with $i$-th coordinate 1 and $j$-th coordinate 0 for $j\ne i$.
	Then there are vectors $w_{ij}\in \R^m$ and $w_j\in \R^m$
	so that $\pi_0(y_j)/|\pi_0(y_j)|+w_j$ and $e_i+w_{ij}$ are in the tangent space
	to $p^{-1}T$ at $y_j$ for all $j$ and $i=k+1,\ldots,m$
	and furthermore $\lim_{j\to \infty}w_j=0$ and 
	$\lim_{j\to \infty}w_{ij}=0$ for all $i=k+1,\ldots,m$.
	
	Let $f\colon V\to \R\times \R^{m-k}$ be the function
	$f(x) = (\rho p,\pi_1'p^{-1}\pi p)$.
	By Lemma \ref{division} we know there are a smooth 
	$k\times k$ matrix valued function $L\colon V\to \R^{k\times k}$
	and a smooth $(m-k)\times k$ matrix valued function 
	$K\colon V\to \R^{(m-k)\times k}$
	so that $\rho p(x) = \pi_0'(x)^TL(x)\pi_0'(x)$
	and $\pi_1'p^{-1}\pi p(x) = \pi_1'(x) + K(x)\pi_0'(x)$.
	Replacing $L(x)$ by $(L(x)+L(x)^T)/2$ we may as well suppose that
	$L(x)$ is always symmetric.
	
	Let $J(x)$ be the Jacobian of $f$ at $x$.
	By assumption, the $1+\dim S$ vectors $J(y_j)(\pi_0(y_j)/|\pi_0(y_j)|^2+w_j/|\pi_0(y_j)|)$
	and $J(y_j)(e_i+w_{ij})$ for $i=k+1,\ldots,m$ are linearly dependent for each $j$.
	Note that $\lim_{j\to\infty}J(y_i)(e_i+w_{ij})=J(y)(e_i)=
	\begin{bmatrix}
	0\\
	\pi_1'(e_i)
	\end{bmatrix}$ for $i>k$.
	On the other hand, the first row $J_0(x)$ of $J(x)$
	is $[2\pi'_0(x)^TL(x)\ 0]+\Sigma_{i=1}^k\Sigma_{\ell=1}^k x_ix_\ell J'_{i\ell}(x)$ for some smooth row vector valued
	functions $J'_{i\ell}$.
	Then 
	\begin{eqnarray*}
		&&\lim_{j\to\infty}J_0(y_j)
		( \pi_0(y_j)/|\pi_0(y_j)|^2+w_j/|\pi_0(y_j)|)\\
		&=& \lim_{j\to\infty}[2\pi'_0(y_j)^TL(y_j)\ 0]
		( \pi_0(y_j)/|\pi_0(y_j)|^2+w_j/|\pi_0(y_j)|)\\
		&=& \lim_{j\to\infty} 2 \pi'_0(y_j)^TL(y_j) \pi'_0(y_j)/| \pi_0(y_j)|^2 + 2 \pi'_0(y_j)^TL(y_j)\pi'_0(w_j)/| \pi_0(y_j)| \\
		&=& 2v^TL(y)v + v^TL(y)0 = 2v^TL(y)v >0
	\end{eqnarray*}
	and consequently 
	$\lim_{j\to\infty} J(y_j)(\pi_0(y_j)/|\pi_0(y_j)|^2+w_j/|\pi_0(y_j)|) = \begin{bmatrix}
	2v^TL(y)v\\
	*
	\end{bmatrix}$.
	So we have a sequence of linearly dependent vectors
	with linearly independent limits, a contradiction.
\end{proof}

\begin{lemma}\label{goodshrink}
	Suppose $U$ is an open \nbhd\ of a submanifold $N$ of a manifold $M$.
	Then there is an open \nbhd\ $V$ of $N$ so that $\Cl V \subset U\cup \Cl N$.
	We call such a $V$ a {\em good shrinking} of $U$.
\end{lemma}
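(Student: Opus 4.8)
The plan is to express the closed sets $M-U$ and $\Cl N$ as zero sets of smooth nonnegative functions and then take $V$ to be the region where the first function strictly dominates the second; the slack $\Cl N$ in the statement is exactly what lets this work, since $N$ need not be closed.

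Concretely, I would first invoke the standard fact (a routine consequence of the smooth partitions of unity used elsewhere in this paper) that any closed subset of a smooth manifold is the zero set of a smooth function $M\to[0,\infty)$. Applying this twice, choose smooth $g\colon M\to[0,\infty)$ with $g^{-1}(0)=M-U$ and smooth $h\colon M\to[0,\infty)$ with $h^{-1}(0)=\Cl N$, with the convention that if $M-U$ (respectively $\Cl N$) is empty we take $g\equiv 1$ (respectively $h\equiv 1$). Thus $g>0$ exactly on $U$ and $h>0$ exactly on $M-\Cl N$. Then set $V=\{x\in M\mid h(x)<g(x)\}$. This is open because $h-g$ is continuous, and it contains $N$: for $x\in N$ we have $h(x)=0$ since $x\in\Cl N$, while $g(x)>0$ since $x\in N\subset U$, so $h(x)<g(x)$.

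It remains to check $\Cl V\subset U\cup\Cl N$. Since $V=(h-g)^{-1}\bigl((-\infty,0)\bigr)$ with $h-g$ continuous, $\Cl V\subset(h-g)^{-1}\bigl((-\infty,0]\bigr)=\{x\in M\mid h(x)\le g(x)\}$. If $y\in\Cl V$ and $y\notin\Cl N$, then $h(y)>0$, hence $g(y)\ge h(y)>0$, hence $y\in U$; so every point of $\Cl V$ lies in $U\cup\Cl N$, as required. I do not expect a genuine obstacle here: the proof is elementary, and the only points needing care are the degenerate cases where $M-U$ or $\Cl N$ is empty (handled by the convention above) and the (standard) input that closed sets in smooth manifolds are smooth zero sets.
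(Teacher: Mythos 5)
Your proof is correct, but it takes a genuinely different route from the paper's. The paper removes the ``bad'' set $X=\Cl N-N$ to form $M'=M-X$, in which $N$ becomes a \emph{closed} subset disjoint from the closed set $C=M'-U$; normality of the manifold $M'$ then produces an open $V\supset N$ whose closure in $M'$ misses $C$, and re-inserting $X$ gives $\Cl_M V\subset U\cup X=U\cup\Cl N$. That argument uses nothing but point-set normality. You instead invoke the smooth Urysohn--Whitney fact that closed subsets of a manifold are zero sets of smooth nonnegative functions, and define $V=\{h<g\}$ with $g^{-1}(0)=M-U$ and $h^{-1}(0)=\Cl N$; the containment $\Cl V\subset\{h\le g\}\subset U\cup\Cl N$ is then a direct check. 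Both are short and clean. The paper's version is more elementary in that it never touches the smooth structure; yours is a bit heavier-handed for this purely topological statement but has the compensating advantage of producing $V$ as an explicit sublevel set, and it avoids the slightly delicate bookkeeping of passing between closures computed in $M$ versus in $M'$. One small remark: you could also make your argument purely topological by replacing the smooth functions with continuous Urysohn functions (or distance functions for a chosen metric), which would bring it closer in spirit to the paper's use of normality.
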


\begin{proof}
	Let $X = \Cl N -N$, $M' = M-X$,
	and $C =M' -U$.
	Then $C$ and $N$ are disjoint
	closed subsets of the normal space $M'$ and hence there
	is an open \nbhd\ $V$ of $N$ in $M'$ whose closure
	in $M'$ does not intersect $C$.
	Then the closure of $V$ in $M$ is contained in $U\cup X$.
\end{proof}

\begin{lemma}\label{proper_rho}
	Suppose $U$ is an open \nbhd\ of a submanifold $N$ of a manifold $M$,
	$\pi\colon U\to N$ is a smooth retraction, and $\rho\colon U\to [0,\infty)$ is \MB\ around $N$.
	Then there is an open \nbhd\ $V$ of $N$ so that:
	\begin{itemize}
		\item $V\subset U$.
		\item $V$ is a good shrinking of $U$, i.e., $\Cl V\subset U\cup \Cl N$.
		\item $\pi\times\rho\colon V\to W$ is a proper map, where $W = (\pi\times\rho)(V)\subset N\times [0,\infty)$.
		\item The restriction of $\pi\times \rho$ to $V-N$ is a proper submersion onto $W-N\times 0$
		(and hence by a theorem of Ehresmann is a smoothly locally trivial fibration\footnote{in fact a sphere bundle}).
		\item There is a smooth $\gamma\colon N\to (0,\infty)$ so that 
		$$W=\{(x,t)\in N\times [0,\infty) \mid t < \gamma(x) \}.$$
	\end{itemize}
\end{lemma}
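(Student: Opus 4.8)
\emph{Reductions and submersivity.} The plan is to reduce to the explicit local picture in which $\rho$ is a fibrewise squared norm and $\pi$ is a projection, and then patch. First, exactly as in the proof of Lemma \ref{make_retract_rel}, replace $M$ by $M-(\Cl N-N)$; then $N$ is closed in $M$ and ``good shrinking of $U$'' simply means $\Cl V\subset U$. Next shrink $U$ so that $\pi$ is a submersion (submersivity is open and holds along $N$ since $\pi$ is a retraction there) and so that $\rho^{-1}(0)=N$ (possible because $N$ is open in $\rho^{-1}(0)$ and $\rho>0$ off $N$ near $N$, both by Lemma \ref{division}(3)). I then claim one may shrink $U$ once more so that $\pi\times\rho$ restricted to $U-N$ is a submersion onto an open subset of $N\times(0,\infty)$. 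Since $d\pi$ has maximal rank, this is equivalent to $d\rho_z$ being nonzero on $\ker d\pi_z$ for each $z\in U-N$, i.e.\ to $\pi(z)$ being the unique critical point of $\rho|_{\pi^{-1}(\pi(z))}$ near it. Using the normal form $\rho(x,y)=y^{T}L(x,y)y$ with $L(x,0)$ positive definite from Lemma \ref{division}(2),(3), together with the fact that $\ker d\pi$ is transverse to $N$, a short local computation (evaluate $d\rho_{(x,y)}$ on the vector in $\ker d\pi_{(x,y)}$ lying over $y$) gives a value $\approx y^{T}L(x,0)y>0$, so the claim holds on a neighborhood of $N$.

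\emph{The local model.} Every point of $N$ has a chart over which the normal bundle of $N$ is trivial, so applying Lemma \ref{paramMorseBott} to $\pi$ and $\rho$ restricted over such a chart $O$ yields a diffeomorphism of a neighborhood of $O$ in $U$ onto an open neighborhood of $O\times 0$ in $O\times\R^{k}$ carrying $\rho$ to $|y|^{2}$ and $\pi$ to the projection; after shrinking we may take the domain to be $\{(x,y)\mid |y|^{2}<\gamma_{O}(x)\}$ for a smooth $\gamma_{O}\colon O\to(0,\infty)$, and its image to lie inside any prescribed neighborhood of $O$. In this model every assertion is transparent: for any smooth $\gamma\le\gamma_{O}$ the set $\{|y|^{2}<\gamma(x)\}$ is an open disc bundle, $\pi\times\rho$ is the obvious map onto $\{(x,t)\mid t<\gamma(x)\}$, its restriction away from the zero section is a sphere bundle, and all of this is proper over compact subsets of the base.

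\emph{Globalization and verification.} Fix a locally finite cover $\{O_{j}\}$ of $N$ by such chart domains, with model neighborhoods $W_{j}$, and a good shrinking $U_{2}$ of $U\cap\bigcup_{j}W_{j}$ with $\Cl U_{2}\subset U$ (Lemma \ref{goodshrink}). Using a partition of unity on $N$, together with the fact that any positive lower semicontinuous function on $N$ dominates a positive smooth one, produce a single smooth $\gamma\colon N\to(0,\infty)$ with $\gamma<\gamma_{O_{j}}$ on $O_{j}$ for every $j$ and small enough that $\{z\in U\mid \rho(z)<\gamma(\pi(z))\}\subset U_{2}$. Set $V=\{z\in U\mid \rho(z)<\gamma(\pi(z))\}$. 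Then $V$ is open, contains $N$, and $\Cl V\subset\Cl U_{2}\subset U$, so $V$ is a good shrinking of $U$; and since over each $O_{j}$ the function $\rho$ already realizes every value of $[0,\gamma(x))$ on the fibre of $\pi$ through $x$, we get $W:=(\pi\times\rho)(V)=\{(x,t)\in N\times[0,\infty)\mid t<\gamma(x)\}$, giving the required $\gamma$-description. For properness, a compact $K\subset W$ has $\rho\le c\,\gamma\circ\pi$ on $(\pi\times\rho)^{-1}(K)$ for some $c<1$ and $\pi((\pi\times\rho)^{-1}(K))$ contained in a compact $K'\subset N$; covering $K'$ by finitely many $O_{j}$ and using that $V\subset\bigcup_{j}W_{j}$, the preimage meets each $W_{j}$ in a compact disc sub-bundle, so $(\pi\times\rho)^{-1}(K)\cap V$ is a closed subset of a compact set, hence compact. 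The same argument applied off $N$, now using the submersivity from the first step, shows $\pi\times\rho\colon V-N\to W-N\times 0$ is a proper submersion, hence a smoothly locally trivial (sphere) bundle by Ehresmann.

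\emph{Expected main obstacle.} The local pieces are manifestly correct, so the real work is the gluing: producing a single smooth $\gamma$ that is simultaneously dominated by every local radius function $\gamma_{O_{j}}$ and small enough to keep $V$ inside the good shrinking $U_{2}$ and inside the union of the local models, and then carrying out the point-set verification of properness cleanly. Everything else is routine once the first step (uniform submersivity of $\pi\times\rho$ near $N$) and the local normal form are in hand.
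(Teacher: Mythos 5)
Your proposal is correct in its main ideas and reaches the same conclusion as the paper, but it takes a genuinely different technical route. The paper establishes the submersivity of $\pi\times\rho$ off $N$ by invoking Lemma \ref{Whitney_submersion} (applied to the pair $S=N$, $T=U-N$, which satisfies the Whitney conditions trivially), and then obtains the needed fiberwise compactness by intersecting $U$ with the interior of a closed tubular neighborhood of $N$. You instead do the submersivity computation by hand, via Lemma \ref{division}(2),(3): writing $\rho=y^{T}L(x,y)y$ with $L(x,0)$ positive definite, you evaluate $d\rho$ on the vector in $\ker d\pi$ over $y$ and get something $\approx 2\,y^{T}L(x,0)y>0$. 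And you get fiberwise compactness for free from the explicit local model supplied by Lemma \ref{paramMorseBott}, in which $\rho=|y|^{2}$, $\pi$ is projection, and the fibers are literal round discs. Your approach is more concrete and makes the $\gamma$-description of $W$ and the sphere-bundle statement transparent, at the cost of carrying the parameterized Morse normal form through the globalization; the paper's approach is lighter on local computation but leans on Lemma \ref{Whitney_submersion}, which it has to prove anyway for later use. Both build $\gamma$ the same way: pointwise, from positive local constants, glued by a partition of unity on $N$.

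One small slip you should fix: defining $V=\{z\in U\mid\rho(z)<\gamma(\pi(z))\}$ is not quite right, because you have no control over where $\rho$ is small on the part of $U$ outside $\bigcup_{j}W_{j}$, so there may be no positive smooth $\gamma$ making this set land inside $U_{2}$. Define instead $V=\{z\in \bigcup_{j}W_{j}\mid\rho(z)<\gamma(\pi(z))\}$ (or restrict to $U_{2}$). Then the requirement on $\gamma$ --- that it be dominated both by every $\gamma_{O_{j}}$ and by $\inf\{\rho(z):z\in\pi^{-1}(x)\cap\bigcup_{j}W_{j}-U_{2}\}$ --- is a genuine pointwise-positive lower-semicontinuous bound (each fiber $\pi^{-1}(x)$ meets only finitely many $W_{j}$, and in each model the set outside $U_{2}$ is bounded away from $y=0$ over compact pieces of the base), and your partition-of-unity construction of $\gamma$ goes through; this is the same kind of estimate the paper carries out with its $\alpha_{x}$, $\beta_{x}$. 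With that adjustment the rest of your verification --- the identification of $W$, properness via compact disc sub-bundles, and Ehresmann for the sphere-bundle statement off $N$ --- is sound.
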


\begin{proof}
	By Lemma \ref{Whitney_submersion} with $T=U-N$ and $S=N$ we may as well suppose that
	$\pi\times\rho\colon U-N\to N\times (0,\infty)$ is a submersion.
	After further shrinking $U$, we may as well also suppose that for any compact $K\subset N$
	that $\Cl(\pi^{-1}(K))$ is compact, for example intersect with the interior
	of a closed tubular \nbhd\ of $N$ or use a more elementary argument.
	By Lemma \ref{goodshrink} we may choose a good shrinking $V'$ of $U$.
	For each $x\in N$ choose an open \nbhd\ $Z_x$ of $x$ in $N$ so that
	$\Cl Z_x$ is compact.
	By compactness we may choose $\alpha_x>0$  so that $\rho(y)\ge \alpha_x$ for all $y\in (\Cl V' - V')\cap \pi^{-1}(Z_x)$.
	Take a partition of unity $\{\beta_x\}$ for the open cover $\{Z_x\}$ of $N$ and let $\gamma(z) = \Sigma_{x\in N} \beta_x(z) \alpha_x$.
	Note that for any $z\in N$, $\gamma(z) \le \rho(y)$ for all $y\in (\Cl V'-V')\cap \pi^{-1}(z)$
	since $\gamma(z)\le \max_{z\in Z_x}\alpha_x$.
	Let $W= \{(x,t)\in N\times [0,\infty) \mid t < \gamma(x) \}$ and $V = (\pi\times\rho)^{-1}(W)\cap V'$. 
	Note $\Cl V\subset \Cl V' \subset U\cup \Cl N$ so we only need to show that
	$\pi\times\rho\colon V\to W$ is proper.
	
	Take $K\subset W$ compact.
	We must show that $(\pi\times \rho)^{-1}(K)\cap V$ is compact.
	We know $K\subset K'\times [0,m]$ for some compact $K'\subset N$ and some $m$.
	Then $(\pi\times \rho)^{-1}(K)\cap \Cl V$ is a closed subset of $\pi^{-1}(K')\cap \Cl V$
	which is itself a closed subset of the compact $\Cl\pi^{-1}(K')$.
	Hence $(\pi\times \rho)^{-1}(K)\cap \Cl V$ is compact.
	But $(\pi\times \rho)^{-1}(K)\cap V = (\pi\times \rho)^{-1}(K)\cap \Cl V$
	since $\rho(x)\ge \gamma\pi(x)$ (in fact $=$) for $x\in \Cl V-V$.	
\end{proof}

\begin{lemma}\label{piconstruct}
	Let $X$ be a Whitney stratified subset of a smooth manifold $M$,
	$q\colon M\to N$ is a smooth submersion to a smooth manifold $N$ which restricts to a submersion on each stratum of $X$,
	and suppose that for each stratum $S$ of $X$ we choose a
	\MB\
	function $\rho_S\colon U''_S\to [0,\infty)$
	around  $S$. 
	Moreover, suppose that there is a closed set $Y\subset X$ and for each stratum
	$S$ we have a (possibly empty) \nbhd\ $U'_S$ of $Y\cap S$ in $U''_S$ and 
	a smooth retraction $\pi'_S\colon U'_S\to S\cap U'_S$ 
	so that $q\pi'_S=q$ and if $T\prec S$ then $\pi'_T\pi'_S = \pi'_T$ and
		$\rho_T\pi'_S = \rho_T$
		on $\pi'_S{}^{-1}(U'_T)\cap U'_T$.
	Then there are good shrinkings $U_S$ of the \nbhd s $U''_S$ and
	smooth retractions $\pi_S\colon U_S\to S$ for all strata $S$ so that
	\begin{enumerate}
		\item If $T\prec S$ then $U_S\cap U_T= \pi_S^{-1}(S\cap U_T)$.
		\item If $T\prec S$ then $\pi_T\pi_S = \pi_T$ and
		$\rho_T\pi_S = \rho_T$
		on $U_S\cap U_T$.
		\item If $U_S\cap U_T$ is nonempty, then either $S\prec T$,
		$T\prec S$, or $S=T$.
		\item If $T_1\prec T_2\prec\cdots\prec T_\ell\prec S$ then
		$$(\rho_{T_1},\rho_{T_2},\ldots,\rho_{T_\ell},\pi_{T_1})
		\colon S\cap U_{T_1}\cap\cdots\cap U_{T_\ell} \to
		(0,\infty)^\ell\times T_1$$ is a submersion.
		\item $\pi_S\times \rho_S\colon U_S\to W_S$ is a proper map onto $W_S$, where
		$W_S=\{(x,t)\in S\times [0,\infty) \mid  t < \gamma_S(x) \}$
		for some smooth $\gamma_S\colon S\to (0,\infty)$.
		\item Any compact subset of $X$ intersects only a finite number of the $U_S$.
		\item $\pi_S = \pi'_S$ on a \nbhd\ of $Y\cap S$ in $X$.
		\item $q\pi_S=q|_{U_S}$.
		\end{enumerate}
\end{lemma}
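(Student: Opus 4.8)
The plan is to construct the retractions $\pi_S$ by induction on $\dim S$, building the tube of each stratum so as to be compatible with the tubes of all strata of strictly smaller dimension, in the spirit of the Thom--Mather construction, but delegating the existence of the retraction and its compatibility with $\rho_S$, $q$, and $\pi'_S$ to Lemma \ref{make_retract_rel}. So suppose $U_T,\pi_T$ have been produced for every stratum $T$ with $\dim T<d$ so that conclusions 1--8 hold among these strata, and fix a stratum $S$ with $\dim S=d$.

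To build $\pi_S$, I would invoke Lemma \ref{make_retract_rel} with $M$ replaced by a neighborhood of $S$ (after deleting $\Cl S-S$, as in that proof, so $S$ becomes closed), with $N=K=S$, with the given closed set $Y$ and retraction $\sigma=\pi'_S$, and with the family of maps $q\colon M\to N$ together with $(\rho_T,\pi_T)\colon U_T\to(0,\infty)\times T$ for each $T\prec S$; the accompanying closed sets are all of $M$ for $q$ and a closed neighborhood of $T$ inside $U_T$ for each $T\prec S$. The output is a retraction $\pi_S$ onto a neighborhood of $S$ with $q\pi_S=q$ on its whole domain (conclusion 8, because the closed set for $q$ is everything), with $\rho_T\pi_S=\rho_T$ and $\pi_T\pi_S=\pi_T$ near $T$ for each $T\prec S$ (conclusion 2), and with $\pi_S=\pi'_S$ near $Y$ (conclusion 7); the hypothesis of Lemma \ref{make_retract_rel} that $\sigma$ already commutes with these maps near $Y$ is, after shrinking the given neighborhood $U'_S$ of $Y\cap S$ down to where $\pi_T=\pi'_T$ holds, precisely the compatibility that \ref{piconstruct} assumes of $\pi'_S$.

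Before applying Lemma \ref{make_retract_rel} one must check that every finite sub-product of the chosen maps is locally linear with respect to $S$. For $q$ this is just the assumption that $q$ is a submersion restricting to a submersion on $S$. For a chain $T_1\prec\cdots\prec T_m\prec S$, the relations $\pi_{T_i}\pi_{T_j}=\pi_{T_i}$ and $\rho_{T_i}\pi_{T_j}=\rho_{T_i}$ for $i<j$ (conclusion 2 at the previous stage) collapse the product to $(\rho_{T_1},\dots,\rho_{T_m},\pi_{T_1},q)$, and I claim this is a submersion both on the ambient manifold near $S$ and on $S\cap\bigcap_i U_{T_i}$, whence local linearity follows by choosing coordinates adapted to $S$ and to this submersion. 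The ambient claim uses conclusion 5 at the previous stage ($\pi_T\times\rho_T$ is a submersion off $T$) plus submersivity of $q$; the claim on $S$ I would prove by induction on $m$ — the case $m=1$ is Lemma \ref{Whitney_submersion} with $T_1$ and $S$ in the roles of its $S$ and $T$ (arranged by having shrunk $U_{T_1}$, at its own construction stage, inside the neighborhood that lemma supplies, for each of the finitely-many-near-any-compact-set strata above $T_1$), and for $m>1$ one composes the submersion $(\rho_{T_m},\pi_{T_m})|_S$ with the previous-stage conclusion 4 for the chain $T_1\prec\cdots\prec T_{m-1}\prec T_m$ inside $T_m$, using that $\pi_{T_m}$ carries $S\cap\bigcap_{i\le m}U_{T_i}$ into $T_m\cap\bigcap_{i<m}U_{T_i}$ (conclusion 1, previous stage) and that $\rho_{T_i}|_S=\rho_{T_i}\pi_{T_m}|_S$, $\pi_{T_1}|_S=\pi_{T_1}\pi_{T_m}|_S$. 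The very same composition, now with $S$ itself as the top of the chain, establishes conclusion 4 for chains ending at $S$.

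Lastly I would pass to a good shrinking: apply Lemma \ref{proper_rho} to $\pi_S\times\rho_S$ (on the intersection of the domain of $\pi_S$ with $U''_S$) to obtain a good shrinking $U_S$ of $U''_S$ on which $\pi_S\times\rho_S$ is proper onto $W_S=\{(x,t)\mid t<\gamma_S(x)\}$ (conclusion 5), then shrink $U_S$ further so that $U_S\cap U_T=\pi_S^{-1}(S\cap U_T)$ for each $T\prec S$, so that $U_S$ misses $U_T$ whenever neither of $S,T$ lies in the other's closure (possible since then $\Cl S\cap T=\emptyset=S\cap\Cl T$ by the stratification axiom), and so that every compact subset of $X$ meets only finitely many $U_S$ (conclusions 1, 3, 6, using local finiteness of the stratification and a proper exhaustion); each of these shrinkings preserves properness, the submersion conclusions, and the neighborhood of $Y\cap S$ on which $\pi_S=\pi'_S$. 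I expect the real obstacle to be not any single computation but the simultaneous bookkeeping of the shrinkings — in this last paragraph and in the $m=1$ step above — so that all eight conclusions survive for every stratum at once; once the Whitney hypotheses have been converted, via Lemmas \ref{Whitney_submersion} and \ref{make_retract_rel}, into local linearity and the stated compatibilities, nothing else is more than routine.
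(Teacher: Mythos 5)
Your overall strategy matches the paper's: induct on the dimension of the stratum, delegate the construction of $\pi_S$ to Lemma~\ref{make_retract_rel}, and verify the local linearity hypothesis using the previously-constructed tubes. Two remarks about how you discharge that hypothesis. First, where you argue that ``submersion on both $M$ and $S$ implies locally linear with respect to $S$'' is sound (one can take coordinates on $S$ splitting $f|_S$, extend to coordinates on $M$ with $S$ linear, and then use the change of variables $(u,v,w)\mapsto(f(u,v,w),v,w)$ to linearize $f$ while keeping $\{w=0\}$ fixed), but it is a step you should spell out since the paper instead constructs explicit chains of parameterizations $h_0,\dots,h_{\ell+1}$; your route is slightly slicker. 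Second, your inductive reduction of the $m$-fold submersion-on-$S$ claim via composition with $(\rho_{T_m},\pi_{T_m})|_S$ and conclusions 1, 2 at the previous stage is correct, and is essentially the same composition the paper uses at the end of its proof to re-establish conclusion 4.

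The genuine gap is in the last paragraph, where you claim conclusion~1, that $U_S\cap U_T=\pi_S^{-1}(S\cap U_T)$, can be obtained by ``shrink[ing] $U_S$ further.'' It cannot. Shrinking $U_S$ only restricts how far from $S$ you allow points to be; it has no control over whether a point $x$ near $S$ with $\pi_S(x)\in S\cap U_T$ actually lies in $U_T$, nor whether a point $x\in U_S\cap U_T$ far from $T$ has $\pi_S(x)\in U_T$. Both inclusions fail for a naively shrunk $U_S$. In the paper this equality is \emph{proved}, not arranged: one replaces $U_T$ by an intermediate shrinking $V_T$ for which $\rho_T\times\pi_T$ is proper (Lemma~\ref{proper_rho}), first shrinks $U''_S$ so that $\pi_S(V_T\cap U''_S)\subset V''_T$ (a band on which the compatibilities $\pi_T\pi_S=\pi_T$, $\rho_T\pi_S=\rho_T$ hold), and then shows both inclusions by lifting the radial path $t\mapsto\beta(t)$ from $\pi_S(x)$ to $x$ through the fibers of $\pi_S\times\rho_S$ and invoking properness of $\rho_T\times\pi_T|_{V_T}$ to argue the path cannot leave $V_T$ once inside, nor fail to enter it. That argument is the technical heart of the lemma and is what the final sentence of your proposal, ``nothing else is more than routine,'' waves away; it is precisely the ``simultaneous bookkeeping'' you correctly identify as the obstacle, and it needs the $V_T/V''_T$ apparatus, not a shrink of $U_S$.
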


\begin{proof}
	Let $\emptyset = K_0\subset K_1\subset K_2\subset \cdots$ be a sequence of compact subsets of $M$
	whose union is $M$.
	For each stratum $T$ let $j$ be the largest integer so that $T\cap K_j$ is empty.
	Replace $U''_T$ by the smaller open \nbhd\ $U''_T-K_j$.
	By local finiteness, any compact subset intersects only a finite number of strata $S$,
	and hence only a finite number of $U''_S$.  So the sixth condition will hold.	
	
	The third condition is easily obtained by shrinking the $U''_S$.
	We suppose by induction on $i$ that if $S$ and $T$ are strata which intersect $K_i$ and neither $T\prec S$
	nor $S\preceq T$ then $U''_S\cap U''_T$ is empty.
	The inductive step shrinks the $U''_S$ leaving $U''_S\cap K_i$ fixed.
	In the end, each $U''_S$ will still be open even though it may have been shrunk infinitely often.

	By induction on $k$ we may suppose that for all strata $S$ of dimension $<k$ we have a good shrinking
	$U_S$ of $U''_S$ and a retraction $\pi_S\colon U''_S\to S$ so that:
	\begin{itemize}
		\item If $T\prec S$ and $\dim S<k$, then $U_S\cap U_T= \pi_S^{-1}(S\cap U_T)\cap U_S$.
		\item If $T\prec S$ and $\dim S<k$, then $\pi_T\pi_S(x) = \pi_T(x)$ and
		$\rho_T\pi_S(x) = \rho_T(x)$ for all $x\in U_T\cap \pi_S^{-1}(S\cap U''_T)$.
		\item If $T_1\prec T_2\prec\cdots\prec T_\ell\prec S$ and $\dim T_\ell <k$, then
		$$(\rho_{T_1},\rho_{T_2},\ldots,\rho_{T_\ell},\pi_{T_1})
		\colon S\cap U_{T_1}\cap\cdots\cap U_{T_\ell} \to
		(0,\infty)^\ell\times T_1$$ is a submersion.
		\item If $\dim S<k$ then $\pi_S\times \rho_S\colon U_S\to W_S$ is a proper map onto $W_S$, where
		$W_S=\{(x,t)\in S\times [0,\infty) \mid  t < \gamma_S(x) \}$
		for some smooth $\gamma_S\colon S\to (0,\infty)$.
		\item If $\dim S<k$ then $\pi_S = \pi'_S$ on a \nbhd\ of $Y\cap S$ in $X$.
		\item If $\dim S<k$ then  $q\pi_S=q|_{U_S}$.
	\end{itemize}
	Now let $S$ be any stratum of dimension $k$.  For the inductive step we need an appropriate retraction $\pi_S\colon U''_S\to S$,
	after perhaps shrinking $U''_S$, as well as a good shrinking $U_S$ of $U''_S$.
		This follows from Lemma \ref{make_retract_rel}.
		In particular, we let $Z=\Cl S - S = \bigcup_{T\prec S}T$
		and for each $T\prec S$ we let $V_T$ be a good shrinking of $U_T$
		satisfying the conclusions of Lemma \ref{proper_rho}.
		We apply Lemma \ref{make_retract_rel} with $N= S$, $M= M-Z$, $K=S$, $Y = Y\cap S$, and $\sigma = \pi'_S$.
		The $U_i$ are $U_T-Z$ for $T\prec S$, the $C_i$ are $\Cl V_T - Z$, and the $q_i$ are 
		$\pi_T\times \rho_T\colon U_T-Z\to T\times \R$.
		One additional $U_i$ is $U_0=C_0=M$ and $q_0 = q$.
		To satisfy the hypotheses of Lemma \ref{make_retract_rel} we must show that if
		$T_1,\ldots,T_\ell$ is a collection of strata with $T_i\prec S$ and $\bigcap_{i=1}^\ell U_{T_i}$ nonempty 
		then $q\times \Pi_{i=1}^\ell \pi_{T_i}\times \Pi_{i=1}^\ell \rho_{T_i}\colon
		\bigcap_{i=1}^\ell U_{T_i}\to \Pi_{i=1}^\ell T_i \times \R^\ell$
		is locally linear
		with respect to $S$.
		We may as well reorder so that $T_1\prec T_2\prec \cdots\prec T_\ell$.
		Take any $z\in S\cap\bigcap_{i=1}^\ell U_{T_i}$.
		Take any parameterization $h_0\colon V_0 \to N$ of a \nbhd\ of $q(z)=q\pi_{T_1}(z)$ in $N$ with $V_0\subset \R^{m_0}$ open.
		Since $q|_{T_1}$ is a submersion there is a parameterization
		 $h_1\colon V_1\to T_1$
		of a \nbhd\ of $\pi_{T_1}(z)$
		with $V_1\subset \R^{m_1}$ open so that $h_0^{-1}qh_1(x) = (x_1,\ldots,x_{m_0})$.
		Since $(\pi_{T_1},\rho_{T_1})|\colon T_2\cap U_{T_1} 
		\to T_1\times \R$ is a  submersion, we may choose 
		a parameterization $h_2\colon V_2\to T_2$ of a \nbhd\ of $\pi_{T_2}(z)$
		with 
		$V_2\subset \R^{m_2}$ open
		so that $h_1^{-1}\pi_{T_1}h_2(x) = (x_1,\ldots,x_{m_1})$
		and $\rho_{T_1}h_2(x) = x_{m_1+1}$.
		Continuing in this way we get for each $i=1,\ldots,\ell$
		a parameterization $h_i\colon V_i\to T_i$ of a \nbhd\ of $\pi_{T_i}(z)$
		with 
		$V_i\subset \R^{m_i}$ open,
		$h_{i}^{-1}\pi_{T_{i}}h_{i+1}(x) = (x_1,\ldots,x_{m_{i}})$,
		and $\rho_{T_i}h_{i+1}(x) = x_{m_i+1}$.
		Since $\pi_{T_\ell}\times \rho_{T_\ell}$ submerses the pair $(M,S)$ at $z$ we may 
		likewise choose a parameterization $h_{\ell+1}\colon V_{\ell+1}\to M$ of a \nbhd\ of $z$ in $\bigcap_{i=1}^\ell U_{T_i}$
		with 
		$V_{\ell+1}\subset \R^{m_{\ell+1}}$ open,
		$h_\ell^{-1}\pi_{T_\ell}h_{\ell+1}(x) = (x_1,\ldots,x_{m_\ell})$,
		$\rho_{T_\ell}h_{\ell+1}(x) = x_{m_\ell+1}$, $h_{\ell+1}^{-1}(z)=0$, and 
		$h_{\ell+1}^{-1}(S) = V_{\ell+1}\cap  E$
		for the linear subspace $E$ defined by the vanishing of $x_i$ for all $i>\dim S$.
		Note that $\pi_{T_i}=\pi_{T_i}\pi_{T_{i+1}}\cdots \pi_{T_\ell}$ and
		 $\rho_{T_i}=\rho_{T_i}\pi_{T_{i+1}}\cdots \pi_{T_\ell}$
		and thus $h_{i}^{-1}\pi_{T_{i}}h_{{\ell+1}}(x) = (x_1,\ldots,x_{m_{i}})$
		and $\rho_{T_i}h_{{\ell+1}}(x) = x_{m_i+1}$ for all $i=1,\ldots,\ell$.
		Since $q\pi_{T_1}=q$ we also know $h_0^{-1}qh_{\ell+1}(x) = (x_1,\ldots,x_{m_{0}})$.
		Consequently, each coordinate of
		$(\Pi_{i=1}^\ell h_i\times {\rm identity}\times h_0)^{-1}
		(\Pi_{i=1}^\ell \pi_{T_i}\times \Pi_{i=1}^\ell \rho_{T_i}\times q)h_{\ell+1}(x)$
		is some coordinate of $x$, in particular linear.
		We know $(\Pi_{i=1}^\ell \pi_{T_i}\times \Pi_{i=1}^\ell \rho_{T_i}\times q)^{-1}(\Pi_{i=1}^\ell \pi_{T_i}\times \Pi_{i=1}^\ell \rho_{T_i}\times q)(z)$ is transverse to $S$ since $h_{\ell+1}^{-1}$ of it contains $E^\perp$.
		So $\Pi_{i=1}^\ell \pi_{T_i}\times \Pi_{i=1}^\ell \rho_{T_i}\times q$
		is locally linear with respect to $S$.
		So by Lemma \ref{make_retract_rel}
		there is a \nbhd\ $U''_S$ of $S$ in $M-Z$ and a smooth retraction
		$\pi_S\colon U''_S\to S$ so that  $\pi_S(x)=\pi'_S(x)$ for all $x$ in a \nbhd\ of $Y\cap S$,
		$q\pi_S(x)=q(x)$,
		$\pi_{T}\pi_S(x)=\pi_{T}(x)$, and 
		$\rho_{T}\pi_S(x)=\rho_{T}(x)$ if $T\prec S$ and $x$ is in some \nbhd\ $V'_T$ of
		 $\Cl V_T\cap \pi_S^{-1}(S\cap \Cl V_T)$.		
		At this point, replace each $U_T$ by $V_T$
		and replace $U''_T$ by some \nbhd\ $V''_T$ of $\Cl V_T\cap U''_T$ in $U''_T$ so that
		$V_T\cap \pi_S^{-1}(S\cap V''_T)\subset V'_T$, for example choose any \nbhd\ $V''_T$ of $\Cl V_T\cap U''_T$ in $U''_T$
		so that $S\cap \Cl V''_T\subset V'_T$ and delete from $U''_S$ the closure of $\pi_S^{-1}(V''_T\cap S)-V'_T$.
		Note that there are only finitely many strata $P$ with $T\prec P$ so each $U_T$ is only reshrunk finitely often.
		
		We now want to shrink $U''_S$ a bit more if needed so that for each $T\prec S$,
		$\pi_S(V_T\cap U''_S)\subset V''_T$. It suffices to find for each $x\in S$ a \nbhd\ $W$ of $x$ in $U''_S$
		so that $\pi_S(V_T\cap W)\subset V''_T$ for all $T\prec S$.
		We start with a \nbhd\ $W'$ of $x$ in $U''_S$ which intersects only finitely many $V''_T$, any \nbhd\ with compact closure will do.
		We obtain $W''$ by intersecting $W'$ with all $ \pi_S^{-1}(S\cap V''_T)$ such that $x\in V''_T$.
		We obtain $W$ by deleting $\Cl V_T$ from $W''$ for all $T$ so that $W''$ intersects $V''_T$ but $x\not\in \Cl V_T$.
		Then $\pi_S(V_T\cap W)\subset V''_T$ for all $T\prec S$ since the left hand side is empty unless $x\in V''_T$
		in which case $\pi_S(W)\subset V''_T$.

	We let $U_S$ be any good shrinking of $U''_S$ satisfying the conclusions of Lemma 	\ref{proper_rho}.
	Now let us see why the inductive conditions are satisfied.
	The second and the last three conditions are immediate.
	Let us show the first condition. 
		Suppose $x\in \pi_S^{-1}(S\cap V_T)\cap U_S-V_T$.
		By the path lifting property for fibrations or by integrating a suitable vector field
		we may choose a continuous path
		$\beta\colon (0,\rho_S(x)]\to U_S$ so that $\pi_S\beta(t) = \pi_S(x)$ and
		$\rho_S\beta(t) = t$ for all $t$ and $\beta(\rho_S(x)) = x$. 
		We may continuously extend $\beta$ to $0$ by setting $\beta(0)=\pi_S(x)$.
		Let $t_0 = \inf\{t\mid \beta(t)\not\in V_T \}$.
		For $t<t_0$ we know $\beta(t)\in V_T\cap U_S$ so 
		$\pi_T\beta(t) = \pi_T\pi_S\beta(t) = \pi_T\pi_S(x)$ and
		$\rho_T\beta(t) = \rho_T\pi_S\beta(t) = \rho_T\pi_S(x)$
		are constant.
		But by properness of $\rho_T\times \pi_T|_{V_T}$ we then know $\beta(t_0)\in V_T$
		so $\beta(t)\in V_T$ for $t$ slightly larger than $t_0$, a contradiction.
		So $\pi_S^{-1}(S\cap V_T)\subset U_S\cap V_T$.
		Now suppose $x\in U_S\cap V_T$ but $\pi_S(x)\not\in V_T$.
		Again we may choose a continuous path 
			$\beta\colon [0,\rho_S(x)]\to U_S$ so that $\pi_S\beta(t) = \pi_S(x)$ and
			$\rho_S\beta(t) = t$ for all $t$ and $\beta(\rho_S(x)) = x$. 
		Let $t_0 = \sup \{ t\mid \pi_S\beta(t)\not \in V_T  \}$.
		Recall that we shrunk $U''_S$ enough to guarantee that $\pi_S\beta(t)\in V''_T$ for $t>t_0$
		so $\pi_T\beta(t)=\pi_T\pi_S\beta(t)=\pi_T\pi_S(x) = \pi_T(x)$ and 
		$\rho_T\beta(t) = \rho_T\pi_S\beta(t) = \rho_T\pi_S(x)= \rho_T(x)$.
		So again $\beta(t_0)\in V_T$ and we get a contradiction.
		So $\pi_S^{-1}(S\cap V_T) = V_T\cap U_S$.
		A similar argument shows that $\pi_P^{-1}(P\cap V_T)\cap V_P = V_T\cap V_P$ if $T\prec P$ and $\dim P<k$.

	For the third condition, suppose we have $T_1\prec\cdots\prec T_\ell\prec S\prec P$ then we must show that
	$\rho_S\times(\rho_{T_1},\rho_{T_2},\ldots,\rho_{T_\ell})\times\pi_{T_1}
	\colon P\cap V_{T_1}\cap\cdots\cap V_{T_\ell}\cap U_S \to
	(0,\infty)\times(0,\infty)^{\ell}\times T_1$ is a submersion.
	Note this map is the composition of the submersions
	$\rho_S\times \pi_S\colon P\cap V_{T_1}\cap\cdots\cap V_{T_\ell}\cap U_S \to 	(0,\infty)\times S$
	and $id\times(\rho_{T_1},\rho_{T_2},\ldots,\rho_{T_\ell},\pi_{T_1})$.
	
	After we have finished with the induction, we just restrict each $\pi_S$ and $\rho_S$ to $U_S$ and we are done.
\end{proof}

\end{document}